  \DeclareMathOperator{\var}{Var}
  \renewcommand{\Pr}{\mbox{\rm Pr}}	
  \newcommand{\Exp}{{\mathbb{E}}}
  \newcommand{\R}{\mathbb{R}} 
  \newcommand{\N}{\mathbb{N}} 
  \newcommand{\Z}{\mathbb{Z}} 
  \newcommand{\F}{\mathbb{F}} 
  \newcommand{\pmset}[1]{\{-1,1\}^{#1}} 
  \DeclareMathOperator{\Cball}{\mathcal C}
  \DeclareMathOperator{\cay}{Cay} 
  \DeclareMathOperator{\sol}{Sol} 
  \newcommand{\st}{:\,} 
  \newcommand{\eps}{\varepsilon}
  \newcommand{\ceil}[1]{\lceil{#1}\rceil}
  \DeclareMathOperator{\diam}{diam}
  \newcommand{\pnorm}{{\ell_p,\dots,\ell_p}}
  \newcommand{\tnorm}{{\ell_t,\dots,\ell_t}}
  \newcommand{\beq}{\begin{equation}}
  \newcommand{\eeq}{\end{equation}}
  \newcommand{\beqn}{\begin{equation*}}
  \newcommand{\eeqn}{\end{equation*}}
  \newcommand{\beqr}{\begin{eqnarray}}
  \newcommand{\eeqr}{\end{eqnarray}}
  \newcommand{\beqrn}{\begin{eqnarray*}}
  \newcommand{\eeqrn}{\end{eqnarray*}}
  \newcommand{\bmline}{\begin{multline}}
  \newcommand{\emline}{\end{multline}}
  \newcommand{\bmlinen}{\begin{multline*}}
  \newcommand{\emlinen}{\end{multline*}}
  \theoremstyle{plain}
  \newtheorem{theorem}{Theorem}[section]
  \newtheorem{lemma}[theorem]{Lemma}
  \newtheorem{proposition}[theorem]{Proposition}
  \newtheorem{corollary}[theorem]{Corollary}
  \theoremstyle{definition}
  \newtheorem{definition}[theorem]{Definition}
  \theoremstyle{remark}
  \newtheorem{remark}[theorem]{Remark}
  \renewenvironment{proof}[1][]{
    	\begin{trivlist}
     	\item[\hspace{\labelsep}{\em\noindent Proof#1:\/}]}
     	{{\hfill$\Box$}
    	\end{trivlist}
  }
  \newtheorem*{rep@theorem}{\rep@title}
  \newcommand{\newreptheorem}[2]{%
  \newenvironment{rep#1}[1]{%
  \def\rep@title{#2 \ref{##1}}%
  \begin{rep@theorem}}%
  {\end{rep@theorem}}}
\begin{document}

\title[Arithmetic expanders and deviation bounds for random tensors]{Arithmetic expanders and deviation bounds for sums of random tensors}

\author[Jop Bri\"{e}t]{Jop Bri\"et}
\address{CWI, Science Park 123, 1098 XG Amsterdam, The Netherlands}
\email{j.briet@cwi.nl}
\thanks{J.~B.\ was supported by a VENI grant from the Netherlands Organisation for Scientific Research (NWO)}

\author{Shravas Rao}
\address{Courant Institute, New York University, 251 Mercer Street, New York NY 10012, USA}
\email{rao@cims.nyu.edu}
\thanks{This material is based upon work supported by the National Science Foundation Graduate Research Fellowship Program under Grant No. DGE-1342536.}

\date{}

\maketitle

\begin{abstract}
We prove hypergraph variants of the celebrated Alon--Roichman theorem on spectral expansion of sparse random Cayley graphs.
One of these variants implies that for every prime $p\geq 3$ and any $\eps > 0$, there exists  a set of directions $D\subseteq \F_p^n$ of size $O_{p,\eps}(p^{(1-1/p +o(1))n})$ such that
for every set~$A\subseteq \F_p^n$ of density~$\alpha$,
the fraction of lines in~$A$ with direction in~$D$
is within $\eps\alpha$
of the fraction of all lines in~$A$.
Our proof uses new deviation bounds for sums of independent random multi-linear forms taking values in a generalization of the Birkhoff polytope.
The proof of our deviation bound is based on Dudley's integral inequality and a probabilistic construction of $\eps$-nets.
Using the polynomial method we prove that a Cayley hypergraph with edges generated by a set~$D$ as above requires~$|D| \geq \Omega_p(n^{p-1})$ for (our notion of) spectral expansion for hypergraphs.

%
%
\end{abstract}

\section{Introduction}

In the following all graphs are undirected and may have loops and parallel edges.
For an $n$-vertex graph~$G = (V,E)$ and $u,v\in V$ denote by $e_G(u,v)$ the number of edges connecting~$u$ and~$v$.
If~$G$ is~$k$-regular then its normalized adjacency matrix~$A_G \in \R^{V\times V}$  is given by~$A_G(u,v) = e_G(u,v)/k$.
Let~$1 = \lambda_1(G) \geq \lambda_2(G) \geq \cdots \geq \lambda_{n}(G)\geq -1$ be the eigenvalues of~$A_G$ arranged in decreasing order
and
denote $\lambda(G) = \max_{i\in \{2,\dots,n\}}|\lambda_i(G)|$.

\subsection{Spectral expanders.}
Spectral expanders are infinite families of graphs $\{G_i\}_{i\in\N}$ of size increasing with~$i$ such that the spectral gap $1 - \lambda(G_i)$ is at least some $\delta > 0$ that is independent of~$i\in\N$.
A single graph is said to be an expander if it is tacitly understood to belong to such a family.
Spectral expansion, the property of having large spectral gap, occurs in random graphs have with high probability.
Seminal work on quasirandomness of Thomason~\cite{Thomason:1987, Thomason:1987b}, and Chung, Graham, and Wilson~\cite{Chung:1989} showed that for dense graphs, this property is equivalent to a number of other likely features of random graphs.
One of these is \emph{expansion}, a measure of connectedness showing that no large set of vertices can be disconnected from its complement by cutting only a few edges.
Another is \emph{discrepancy}, which refers the property that the edge density of any sufficiently large induced subgraph is close to the overall edge density.

A long line of research extending the results of~\cite{Chung:1989} to dense hypergraphs was initiated by Chung and Graham~\cite{Chung:1990}, culminating in recent work of Lenz and Mubayi~\cite{Lenz:2015, Lenz:2015_eig} (which we refer to for a more detailed account).
Partially motivated by an application in Theoretical Computer Science concerning special types of error-correcting codes (locally decodable codes)~\cite{BrietDG:2017}, we study the extent to which some known results on \emph{sparse} expanders generalize to hypergraphs.
Along the way we establish a new deviation inequality for sums of independent random multi-linear forms (Theorem~\ref{thm:tensor-hoeffding}) that we hope will find applications elsewhere.

\subsection{Cayley graphs and the Alon--Roichman Theorem.}\label{sec:AR}
Most known examples of sparse  expanders are Cayley graphs, which are defined as follows.
For a finite group~$\Gamma$ and an element~$g\in \Gamma$, the Cayley graph $\cay(\Gamma, \{g\})$ is the 2-regular graph with vertex set~$\Gamma$ and edge set $\{\{u, gu\} \st u\in \Gamma\}$, where in case $g^2 = 1$, all edges are doubled.
For a multiset\footnote{We use curly brackets to delimit \emph{multisets}: unordered lists that may contain repeated elements.} $S = \{g_1,\dots,g_k\}\subseteq \Gamma$, the  Cayley graph $\cay(\Gamma,S)$  is the $2k$-regular graph formed by the  union of the graphs $\cay(\Gamma, \{g_1\}),\dots, \cay(\Gamma, \{g_k\})$.

The group over which Cayley graphs are defined strongly influences the minimal degree required for spectral expansion.
The famous examples of constant-degree  expanders of Margulis~\cite{
Margulis:1973, Margulis:1988} and Lubotzky, Phillips, and Sarnak~\cite{Lubotzky:1988} are Cayley graphs which, crucially, are defined over non-Abelian groups.
It is easy to see that a Cayley graph over the Abelian group~$\F_2^n$, for example, requires degree at least~$n$ to be an expander~\cite{Alon:1994a}.

\begin{proposition}\label{prop:F2n}
Let~$G = \cay(\F_2^n, S)$ be such that $|S| < n$. Then,~$\lambda(G) = 1$.
\end{proposition}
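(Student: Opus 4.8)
The plan is to exploit the fact that characters of $\F_2^n$ simultaneously diagonalize every Cayley graph on this group, and then to use a dimension-counting argument to locate a nontrivial character on which every generator acts trivially. Recall that the irreducible characters of $\F_2^n$ are the functions $\chi_y \colon \F_2^n \to \{-1,1\}$ given by $\chi_y(x) = (-1)^{\langle x,y\rangle}$ for $y \in \F_2^n$, and that each $\chi_y$, viewed as a vector in $\R^{\F_2^n}$, is an eigenvector of $A_G$ for $G = \cay(\F_2^n,S)$ with eigenvalue $\frac{1}{|S|}\sum_{s \in S} \chi_y(s) = \frac{1}{|S|}\sum_{s \in S} (-1)^{\langle s,y\rangle}$. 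In particular, if $y$ satisfies $\langle s,y\rangle = 0$ for all $s \in S$, then the corresponding eigenvalue equals $1$.

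The key step is then purely linear-algebraic over $\F_2$. Write $S = \{s_1,\dots,s_k\}$ with $k = |S| < n$. The condition $\langle s_i, y\rangle = 0$ for all $i$ is a homogeneous linear system in $y$ consisting of $k$ equations in $n$ unknowns over $\F_2$. Since $k < n$, the solution space has dimension at least $n - k \geq 1$, so there exists a nonzero $y \in \F_2^n$ orthogonal to every element of $S$. (One must be mildly careful that $S$ is a multiset, but repeated generators only repeat equations and do not increase the rank, so the bound $\mathrm{rank} \le k < n$ still holds; alternatively one may just take the span of the distinct elements of $S$, which has dimension at most $k < n$.)

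For this nonzero $y$, the vector $\chi_y \in \R^{\F_2^n}$ is a genuine eigenvector of $A_G$ — it is orthogonal to the all-ones vector $\chi_0$ since $y \neq 0$ forces $\sum_x \chi_y(x) = 0$ — with eigenvalue exactly $1$. Hence $1$ is an eigenvalue of $A_G$ of multiplicity at least two (counting $\lambda_1 = 1$ as well), so by the definition $\lambda(G) = \max_{i \geq 2}|\lambda_i(G)| = 1$, which is the claim. There is no real obstacle here; the only thing to state cleanly is the multiset/rank remark and the fact that $\chi_y$ for $y \ne 0$ is orthogonal to the trivial character, so it contributes to $\lambda_2,\dots,\lambda_n$ rather than to $\lambda_1$.
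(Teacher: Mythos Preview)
Your proof is correct and is essentially the paper's argument recast in Fourier language: the paper picks an $(n-1)$-dimensional subspace $T\supseteq S$, observes that $T$ and $\overline T$ are disconnected, and exhibits the eigenvector $1_\Gamma - 2\cdot 1_T$, which is precisely $-\chi_y$ for the nonzero $y$ with $T = y^\perp$. (Your computed eigenvalue $+1$ is in fact the correct one; the paper's stated eigenvalue $-1$ appears to be a slip, though either value yields $\lambda(G)=1$.)
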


\begin{proof}
Let $\Gamma = \F_2^n$. 
Let~$T\subseteq \Gamma$ be an $(n-1)$-dimensional subspace containing~$S$ and let $\overline T = \Gamma\smallsetminus T$.
Since $u,v\in\Gamma$ are connected if and only if $u-v\in S$ and every pair $u\in T, v \in \overline T$ satisfies $v - u\in \overline T$, the sets~$T$ and~$\overline T$ are disconnected.
It follows that $1_\Gamma - 21_T$ is an eigenvector of $A(G)$ 
and has eigenvalue~$-1$.
Hence, $\lambda(G) = 1$.
\end{proof}

Similarly, because expanders must be connected, it follows that spectral expansion requires degree $\Omega(\log n)$ in any Cayley graph over any $n$-element Abelian group~\cite[Proposition~11.5]{Hoory:2006}.
A celebrated result of Alon and Roichman~\cite{Alon:1994a}, however, shows that Abelian groups are extreme in this sense.

\begin{theorem}[Alon--Roichman Theorem]\label{thm:ar-spec}
For any~$\eps \in (0,1)$ there exists a $c(\eps) \in (0,\infty)$ such that the following holds.
Let~$\Gamma$ be a finite group of cardinality~$n$.
Let~$k \geq c(\eps)\log n$ be an integer and let~$g_1,\dots, g_k$ be independent uniformly distributed elements from~$\Gamma$.
Then, with probability at least~$1/2$, the Cayley graph~$G = \cay(\Gamma, \{g_1,\dots,g_k\})$ satisfies
$
\lambda(G) \leq \eps.
$
\end{theorem}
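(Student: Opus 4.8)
The plan is to recast the statement as a concentration inequality for a sum of independent random matrices and apply a matrix Bernstein bound. Write $n=|\Gamma|$ and, for $g\in\Gamma$, let $P_g\in\R^{\Gamma\times\Gamma}$ be the permutation matrix with $(P_g)_{u,v}=\1[v=gu]$. A single generator $g$ contributes to $G=\cay(\Gamma,\{g_1,\dots,g_k\})$ the symmetric matrix $\tfrac12(P_g+P_g^{\mathsf T})$, which equals $P_g$ when $g^2=1$ (matching the doubled-edge convention), so the normalized adjacency matrix is $A_G=\tfrac1{2k}\sum_{i=1}^k(P_{g_i}+P_{g_i}^{\mathsf T})$. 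Since $\E_g[P_g]=J$ with $J:=\tfrac1n\1\1^{\mathsf T}$, we get $\E[A_G]=J$; moreover $A_G$ and $J$ both fix $\1$ while $J$ vanishes on $\1^{\perp}$, so $A_G-J$ has eigenvalue $0$ on $\1$ and eigenvalues $\lambda_2(G),\dots,\lambda_n(G)$ on $\1^{\perp}$, whence $\lambda(G)=\|A_G-J\|$ in operator norm. It therefore suffices to prove $\Pr[\|A_G-J\|>\eps]<\tfrac12$.

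Next I would write $A_G-J=\tfrac1k\sum_{i=1}^k X_i$ with $X_i:=\tfrac12(P_{g_i}+P_{g_i}^{\mathsf T})-J$, which are i.i.d., symmetric and mean zero, and verify the two inputs to matrix Bernstein: (i) $\|X_i\|\le 1$ almost surely, because $X_i\1=0$ and $X_i$ agrees with $\tfrac12(P_{g_i}+P_{g_i}^{\mathsf T})$ on $\1^{\perp}$; and (ii) using $P_gP_g^{\mathsf T}=I$ and $P_gJ=JP_g=J$, that $X_i^2=\tfrac14(P_{g_i}+P_{g_i}^{\mathsf T})^2-J\preceq I$, so the matrix variance $\bigl\|\sum_{i=1}^k\E[X_i^2]\bigr\|$ is at most $k$. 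The matrix Bernstein inequality then gives, with $t=\eps k$,
\[
\Pr\left[\,\|A_G-J\|>\eps\,\right]=\Pr\left[\,\left\|\sum_{i=1}^k X_i\right\|>\eps k\,\right]\le 2n\exp\left(-\frac{\eps^2 k^2/2}{k+\eps k/3}\right)\le 2n\,e^{-\eps^2 k/3},
\]
and taking $c(\eps)=O(1/\eps^2)$ makes this less than $\tfrac12$ whenever $k\ge c(\eps)\log n$. The argument uses no property of $\Gamma$ beyond $\E_g[P_g]=J$, and the resulting degree $\Theta(\log n)$ cannot be improved beyond the constant for abelian groups, by Proposition~\ref{prop:F2n}.

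The step I expect to be the real content --- and the reason the theorem is not routine --- is obtaining degree $\Theta(\log n)$ rather than $\Theta(n)$. The naive alternative, namely fixing a unit vector $x\in\1^{\perp}$, applying scalar Bernstein to $\tfrac1k\sum_i\langle x,X_ix\rangle$ (bounded summands of variance $O(1)$), and union bounding over a $\tfrac14$-net of the unit sphere of $\1^{\perp}$, only yields $k=\Omega(n/\eps^2)$, since such a net has size $e^{\Theta(n)}$ --- a degree linear in $|\Gamma|$, which is worthless. What matrix Bernstein buys, via the Golden--Thompson/trace-exponential device, is that the union over all directions costs merely a factor $n$; an essentially equivalent self-contained route is the even-moment method, bounding $\E[\lambda(G)^{2m}]\le\E[\Tr((A_G-J)^{2m})]$, expanding, discarding the terms that vanish in expectation, estimating the survivors in the style of F\"uredi--Koml\'os, and optimizing at $m=\Theta(\log n)$. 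A Dudley-type chaining bound in the spirit of Theorem~\ref{thm:tensor-hoeffding} also recovers the logarithmic degree, but matrix Bernstein is all that is needed here.
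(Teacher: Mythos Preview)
Your proof is correct. The setup $A_G=\tfrac{1}{k}\sum_i \tfrac12(P_{g_i}+P_{g_i}^{\mathsf T})$, the identity $\lambda(G)=\|A_G-J\|_{S_\infty}$, and the norm and variance bounds for the centered summands $X_i$ are all verified accurately, and matrix Bernstein yields the claimed $k=O(\eps^{-2}\log n)$.

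The paper takes a mild variant of your route. It too reduces to a matrix deviation inequality applied to $A_G=\tfrac1k\sum_i A_{g_i}$ with $\Exp[A_{g_i}]=J/n$, but instead of matrix Bernstein it invokes a matrix Hoeffding bound (Theorem~\ref{thm:matrix-Hoeffding}), obtained from Tomczak-Jaegermann's non-commutative Khintchine inequality via symmetrization and the Kahane--Khintchine inequality. In that formulation the dimension enters as a $\log n$ in the denominator of the exponent rather than as a prefactor~$n$; either way one lands at $k\gtrsim \eps^{-2}\log n$. Your argument is essentially the Landau--Russell proof the paper alludes to (matrix Chernoff/Bernstein), and it has the advantage of being completely self-contained and of using a variance bound that is sharper in principle. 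The paper's choice is expository: the Khintchine/Hoeffding route mirrors the machinery it later develops for the tensor case (Theorem~\ref{thm:tensor-hoeffding}), where no off-the-shelf Bernstein analogue is available and one has to go through symmetrization, Kahane--Khintchine, and a chaining bound on $\Exp\|\sum_i\epsilon_iA_i\|$. Your closing remarks about the failure of the naive net argument and the alternative trace-moment method are accurate and on point.
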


Our main results are 
hypergraph versions of Proposition~\ref{prop:F2n} and Theorem~\ref{thm:ar-spec}.

\subsection{Hypergraphs}
\label{sec:hypergraphs}
A $t$-uniform hypergraph $H = (V, E)$ with vertex set~$V$ has as edge set~$E$ a family of unordered $t$-element multisets with possible parallel edges.
For $u_1,\dots,u_t\in V$ let $e_H(u_1,\dots,u_t)$ denote the number of  edges equal to $\{u_1,\dots,u_t\}$.
The  \emph{adjacency form} of~$H$ is the  $t$-linear form ${\overline{A}_H:\R^V\times\cdots\times\R^V\to\R}$ defined by $\overline{A}_H(1_{\{u_1\}},\dots,1_{\{u_t\}}) = e_H(u_1,\dots,u_t)$.
The degree of a vertex~$v\in V$ is defined by $\overline{A}_H(1_{\{v\}}, 1_V,\dots,1_V)$ and $H$ is $k$-regular if every vertex has degree exactly $k$, in which case its normalized adjacency form is $A_H = \overline{A}_H/k$.
Of particular importance here are hypergraphs whose edge set is given by a multiset of the form $\{\pi_1(v),\dots, \pi_t(v)\}$, $v\in V$, where $\pi_1,\dots,\pi_t$ are permutations on~$V$.
In this case we set
\beq\label{eq:permcount}
e_H(u_1,\dots,u_t)
=
\sum_{\sigma}
\sum_{v\in V}
1_{\{u_1\}}\big(\pi_{\sigma(1)}(v)\big)
\cdots
1_{\{u_t\}}\big(\pi_{\sigma(t)}(v)\big),
\eeq
where~$\sigma$ runs over all permutations of~$[t] = \{1,\dots, t\}$,
giving a $(t!)$-regular hypergraph.

\subsection{Hypergraph spectral expansion.}
\label{sec:hypexp}
To define spectral expansion for hypergraphs we build on the following characterisation of~$\lambda(G)$.
Recall that the Schatten-$\infty$ norm (or spectral norm) of a  matrix~$A$ is given by
$
\|A\|_{S_\infty} = \sup_{x,y\in \R^n\smallsetminus \{{\bf 0}\}}|x^{\mathsf T}Ay|/\|x\|_{\ell_2}\|y\|_{\ell_2}
$.
If~$A$ is symmetric, then this norm is precisely the maximum absolute value of the eigenvalues of~$A$.
Since for an $n$-vertex graph~$G$, the eigenvector associated with the first eigenvalue~$\lambda_1(G) = 1$ is the normalized all-ones vector~${\bf 1}/\sqrt{n}$, we have $\lambda(G)=\|A_G - J/n\|_{S_\infty}$, where $J = {\bf 1}{\bf 1}^{\mathsf T}$ is the all-ones matrix.
Our definition of spectral expansion for hypergraphs is based on the following norm on multilinear forms.
For a $t$-linear form~$A$ on $\R^n$ and $p\in[1,\infty]$ define
\beqn
\|A\|_\pnorm
=
\sup\Big\{\frac{A(x[1],\dots,x[t])}{\|x[1]\|_{\ell_{p}}\cdots \|x[t]\|_{\ell_{p}}}\st x[1],\dots,x[t] \in \R^n\smallsetminus \{{\bf 0}\}\Big\}.
\eeqn
The notion of spectral expansion we shall use is relative to a fixed regular $t$-uniform hypergraph~$K$.
In particular, for a regular $t$-uniform hypergraph~$H$, we define
\beq\label{eq:lambdaKH}
\lambda_K(H)
=
\|A_H - A_K\|_\tnorm
.
\eeq
For graphs, this parameter coincides with~$\lambda(G)$ if~$K$ is the complete graph with all loops.

\subsection{Cayley hypergraphs.}
\label{sec:hypcal}
A Cayley hypergraph over a finite group~$\Gamma$ is a disjoint union of particular permutation hypergraphs as mentioned in Section~\ref{sec:hypergraphs}.
Let $q \in (\Z\smallsetminus\{0\})^t$ be an integer vector such that no element of~$\Gamma$ has order~$q_j$ for every $j\in[t]$. 
This ensures that for every $g\in \Gamma$, the maps $u\mapsto u^{q_j}g$ are permutations.
For ${\bf g} = (g[1],\dots,g[t])\in\Gamma^t$, we define $\cay^{(t)}(\Gamma, q, {\bf g})$ to be the hypergraph as in Section~\ref{sec:hypergraphs} based on the permutations $\pi_j(u) = u^{q_j}g[j]$.
For a multiset $S = \{{\bf g}_1,\dots,{\bf g}_k\}\subseteq \Gamma^{t}$, we let $\cay^{(t)}(\Gamma, q, S)$ be the $(t!)k$-regular hypergraph given by the union of $\cay^{(t)}(\Gamma, q, \{{\bf g}_i\})$ for $i\in[k]$.\\

To connect the above definitions, consider a Cayley hypergraph $K =\cay^{(t)}(\Gamma, q, S)$.
For a subset $S'\subseteq S$, let $H = \cay^{(t)}(\Gamma, q, S')$ be a sub-hypergraph of~$K$ and let $\eps = \lambda_K(H)$.
Then, 
for every set $T\subseteq V$ of density $\tau = |T|/|V|$, 
we have $|(A_H - A_K)(1_T,\dots,1_T)| \leq \eps|T|$. 
Dividing by~$|V|$ shows that the fraction of edges that~$T$ induces in~$H$ is within~$\eps\tau$ of the fraction of edges it induces in~$K$.

\subsection{Translation invariant equations}
\label{sec:transeqs}

To motivate the above definitions we focus on a special class of Cayley hypergraphs that arises from systems of translation invariant equations.
Such a system can be given in terms of a matrix $C\in \Z^{s\times t}$ and a vector $q\in (\Z\smallsetminus\{0\})^t$ such that $Cq = 0$.
For an Abelian group~$\Gamma$ without elements of order $q_j$ for every $j\in[t]$, we then consider the set of solutions in $\Gamma^t$ to the linear equations defined by~$C$, 
\beqn\label{eq:trsystem}
\sol(C) = \big\{{\bf h} = (h[1],\dots,h[t])\in \Gamma^t \st C{\bf h} = 0\big\}.
\eeqn

There is a large body of literature on the problem of bounding the maximum size of a set $A\subseteq \Gamma$ such that $\sol(C)\cap A^t$ contains only trivial solutions.
Well-studied examples involving a single equation (where $s = 1$) include 
Sidon sets~\cite{OBryant:2004}, where $C = [1,1,-1,-1]$, and sets without 3-term arithmetic progressions (APs), where $C = [1,-2,1]$ (sometimes referred to as cap sets)~\cite{Obryant:2011, Sanders:2011, Ellenberg:2016}. 
Sets avoiding a general $t$-variate  translation invariant equation were studied in~\cite{Ruzsa:1993, Bloom:2012, Schoen:2014}.
Probably the most-studied examples involving more than one equation are $t$-term APs \cite{Szemeredi:1990, Green:2007, Tao:2007, Obryant:2011}, where
\beq\label{eq:CtAP}
C
=
\begin{bmatrix}
1 & -2 & 1 & 0 & \cdots & 0 &0 &0\\
0 & 1 & -2 & 1 & \cdots & 0 &0 &0\\
\vdots &\vdots&\vdots&\vdots& \ddots & \vdots &\vdots&\vdots\\
0&0&0&0&\cdots& 1 & -2 & 1
\end{bmatrix}
\in \Z^{(t-2)\times t}.
\eeq

Translation invariance refers to the fact that for every ${\bf h}$ in~$\sol(C)$  and every $u\in \Gamma$, the tuple $(q_1u+ h[1],\dots, q_tu+ h[t])$ belongs to~$\sol(C)$ as well.
As such, $\sol(C)$ is a union of cosets of the subgroup $\{(q_1u,\dots, q_tu)\st u\in \Gamma\} \subseteq\Gamma^t$.
If $S = \{{\bf g}_1,\dots,{\bf g}_k\}$ is a set of representatives of these cosets, then the edge set of the hypergraph $K = \cay^{(t)}(\Gamma, q, S)$  is furnished precisely by the (unordered) tuples in~$\sol(S)$,
which leads to the following definition.

\begin{definition}[Arithmetic expander]\label{def:arithexp}
Let $K$ be the Cayley hypergraph as above.
A multiset $S'\subseteq S$ is a \emph{$(C,q,\Gamma,\eps)$-arithmetic expander} if 
$$\lambda_{K}\big(\cay^{(t)}(\Gamma, q, S')\big) \leq \eps.$$
\end{definition}

The preceding discussion shows that an arithmetic expander has the property that for every set~$A\subseteq \Gamma$ of density~$\alpha$, the fraction of solutions in~$A$ among the cosets represented by~$S'$ is within $\eps \alpha$ of the fraction of all solutions in~$A$.
For APs, this means the following.
The matrix~$C$ as in~\eqref{eq:CtAP} satisfies $Cq = 0$ for $q = {\bf 1} = (1,\dots,1)$,
from which it follows that $\sol(C)$ consists of cosets represented by APs through zero, $S = \{\{0,v,2v,\dots,(t-1)v\}\st v\in \Gamma\}$, which correspond to the possible steps that an AP can take.
In this case, an arithmetic expander is thus characterized by a small set of steps~$D\subseteq \Gamma$ 
such that the fraction of APs in any set~$A$ taking steps from~$D$ gives an accurate estimate of the fraction of all APs in~$A$.
The AP matrix~$C$ also satisfies $Cq= 0$ for $q = (1,2,\dots,t)$, from which it follows that $\sol(C)$ consists of the cosets with representatives given by the points through which $(t+1)$-term APs travel, $S = \{\{u,\dots,u\}\st u\in\Gamma\}$.
In this case, an arithmetic expander thus estimates the fraction of all APs by the fraction of APs travelling through a small fixed set of points.

\section{Our results}
\label{sec:ourresults}

\subsection{Spectral expansion of Cayley hypergraphs.}
Our first result is an extension of Proposition~\ref{prop:F2n} concerning arithmetic expanders for $t$-APs where $t$ is a prime.


\begin{theorem}\label{thm:apexp}
For every prime~$p$ there exist $\eps(p),\delta(p)\in (0,\infty)$ such that the following holds.
Let $n \geq p^2$ be an integer, let $\Gamma = \F_p^n$ and
let $C$ be as in~\eqref{eq:CtAP} with $t = p$.
Then, for any $\eps < \eps(p)$, any $(C,{\bf 1},\Gamma,\eps)$-arithmetic expander has size at least $\delta(p)\, n^{p-1}$.
\end{theorem}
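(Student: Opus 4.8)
The strategy is to mimic the eigenvector obstruction in Proposition \ref{prop:F2n}: exhibit a "test function" on which the normalized adjacency form $A_H$ of any small Cayley hypergraph $H = \cay^{(p)}(\Gamma, \mathbf{1}, S')$ behaves very differently from $A_K$, thereby forcing $\lambda_K(H) = \|A_H - A_K\|_{\ell_p,\dots,\ell_p}$ to be bounded below by a constant. Concretely, the edges of $\cay^{(p)}(\F_p^n, \mathbf{1}, \{v\})$ are the $p$-term APs $\{u, u+v, \dots, u+(p-1)v\}$ with step $v$; a sub-hypergraph $H$ uses only steps $D = \{v_1, \dots, v_k\}$ (the representatives in $S'$), while $K$ uses all of $\F_p^n$. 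The plan is to show that if $|D|$ is too small, then there is a polynomial-type function $f\colon \F_p^n \to \R$ (or a tuple of such) of small $\ell_p$ norm witnessing a large value of $(A_H - A_K)(f, \dots, f)$.

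The key technical ingredient, and where the polynomial method enters (as the abstract promises), is this: the $p$-AP $\{u, u+v, \dots, u+(p-1)v\}$ with $v \ne 0$ is exactly the solution set of the system $C\mathbf{h} = 0$, and over $\F_p$ with $t = p$ prime there is an exceptional algebraic identity — the affine span of a $p$-AP through $0$ is governed by the vanishing of a degree-$(p-1)$ symmetric expression (this is the same phenomenon underlying the cap-set / polynomial-method bounds of \cite{Ellenberg:2016}). I would make this precise by constructing, for each direction $v$, a multilinear form coming from a low-degree polynomial $P$ on $(\F_p^n)^p$ that is constant on every $p$-AP with a fixed step and that, when averaged against the all-steps hypergraph $K$, produces a spread-out object, so that no $k = o(n^{p-1})$ directions can reproduce its action. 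The dimension count $n^{p-1}$ should emerge as the dimension of the space of relevant degree-$\le p-1$ polynomials (or the relevant quotient) in the direction variable — i.e. $\binom{n + p - 2}{p-1} = \Theta_p(n^{p-1})$ — exactly as the bound $|S| < n$ in Proposition \ref{prop:F2n} is the dimension of linear functionals. The argument should run: suppose $|S'| < \delta(p) n^{p-1}$; then the span of the direction-indicators $\{1_{v_i}\}$, pushed through the degree-$(p-1)$ moment map, misses some functional $\phi$; feeding $\phi$ back produces a normalized test tuple on which $A_H$ vanishes (or is tiny) while $A_K$ is bounded below by $\eps(p)$, contradicting $\lambda_K(H) \le \eps < \eps(p)$.

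For the $\ell_p$-norm bookkeeping I would work with test functions of a controlled, preferably $\{-1,0,1\}$-valued or nearly flat, form so that $\|x[j]\|_{\ell_p}$ is easy to compute, and arrange that the numerator $(A_H - A_K)(x[1],\dots,x[p])$ is a fixed constant fraction of the denominator $\prod_j \|x[j]\|_{\ell_p}$; the hypothesis $n \ge p^2$ is presumably there to guarantee enough room for this construction (e.g. to split coordinates into $p$ blocks, or to ensure the relevant polynomial space is non-degenerate). The constant $\delta(p)$ will come out of the comparison between $|S'|$ and the dimension of the polynomial space, and $\eps(p)$ from the fixed gap produced by the test function.

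The main obstacle I anticipate is getting the algebra of "$p$-APs in $\F_p^n$ are level sets of a degree-$(p-1)$ polynomial" to interface cleanly with the $\ell_p \times \cdots \times \ell_p$ operator norm rather than an $\ell_2$ (spectral) norm — the usual polynomial method gives rank/dimension bounds that are naturally spectral, and converting a dimension deficiency into a genuine lower bound on $\|A_H - A_K\|_{\ell_p,\dots,\ell_p}$ requires exhibiting an actual extremal tuple, not merely a missing eigendirection. A secondary subtlety is handling the "union of cosets" structure: $S$ is a set of coset representatives for $\{(u,\dots,u)\} \subseteq \Gamma^p$, so I must be careful that the counting identity \eqref{eq:permcount} for parallel edges and the $(p!)$-regularity normalization are tracked correctly when passing between $\sol(S')$ and the step-set $D$. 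Once the right degree-$(p-1)$ functional and the right flat test tuple are identified, the remaining estimates should be routine.
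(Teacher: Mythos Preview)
Your outline has the right skeleton and coincides with the paper at the top level: one interpolates a nonzero \emph{homogeneous} degree-$(p-1)$ polynomial $f$ on $\F_p^n$ vanishing on the direction set $D$ whenever $|D| < \binom{n+p-2}{p-1} = \Theta_p(n^{p-1})$, and then builds a test tuple of indicator functions out of $f$. Your worry about converting ``dimension deficiency'' into an $\ell_p$-norm lower bound is a red herring: the paper simply applies the inequality defining $\lambda_K(H)$ to indicator vectors (the generalized Expander Mixing Lemma), so no spectral machinery is needed and the $\ell_p$ bookkeeping is trivial.

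What is genuinely missing from your plan is the ``feeding $\phi$ back'' step, and here the paper's construction is quite specific and not what you sketched. The test tuple is $(1_{T_1},1_{T_2},\dots,1_{T_2})$ with $T_1 = Z(f)$ and $T_2 = f^{-1}(a)$ for some nonzero $a$ (which exists by Schwartz--Zippel). Vanishing of $A_{L_D}$ on this tuple is a Vandermonde argument, not an Ellenberg--Gijswijt slice-rank trick: if a line $\ell_{x,d}$ with $d\in D\subseteq Z(f)$ lay in $T_1\times T_2^{p-1}$, then $g(\lambda)=f(x+\lambda d)$ would be a polynomial of degree $\le p-1$ with $g(0)=0$, top coefficient $f(d)=0$ (by homogeneity), and $g(1)=\cdots=g(p-1)=a$; this forces $\mathbf 1$ into the span of $(\lambda^i)_{i=1}^{p-2}$ evaluated at $\lambda=1,\dots,p-1$, contradicting invertibility of a Vandermonde matrix. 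The lower bound on $A_{L_{\F_p^n}}$ is obtained by counting common zeros of the system $f(x)=0,\ f(x+\lambda y)=a\ (\lambda=1,\dots,p-1)$ in $\F_p^{2n}$ via Chevalley--Warning (total degree $p(p-1)<2n$ when $n\ge p^2$, and $(0,y)$ with $y\in T_2$ is a solution by Fermat's little theorem), giving $\ge p^{2n+p-p^2}$ lines. Your proposed ``multilinear form on $(\F_p^n)^p$ constant on APs with fixed step'' does not obviously lead here; the crucial move is to use \emph{two level sets of the same univariate-in-$\lambda$ polynomial} and exploit homogeneity to kill both the constant and the leading term.
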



Our second result is a version of Theorem~\ref{thm:ar-spec}, showing for instance that in the AP case, for~$C$ as in~\eqref{eq:CtAP}, there exist $(C, q, \F_p^n, \eps)$-arithmetic expanders of size $c(t,\eps)p^{(1 - 1/t + o(1))n}$ for both options of~$q$, where $c(t,\eps)$ depends on $t$ and $\eps$ only.

\begin{theorem}\label{thm:hypar}
For every integer $t\geq 3$ and $\eps \in (0,1)$ there exists a $c(t,\eps)\in (0,\infty)$ such that the following holds.
Let $\Gamma$ be a finite group of cardinality~$n$, let $q\in(\Z\smallsetminus \{0\})^t$ be such that $\Gamma$ has no elements of order $q_j$ for every~$j\in[t]$,  let $S\subseteq \Gamma^{t}$ be a multiset and $K = \cay^{(t)}(\Gamma, q,S)$.
For $k = c(t,\eps)n^{1 - 1/t}(\log n)^{t + 1/2}$, let $S'\subseteq S$ be a multi-set of~$k$ independent uniformly distributed tuples from $S$.
Then, with probability at least~$1/2$,  $\lambda_K(\cay^{(t)}(\Gamma, q,S')) \leq \eps$.
\end{theorem}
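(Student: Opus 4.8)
The plan is to reduce the statement to an application of the tensor Hoeffding-type deviation bound (Theorem~\ref{thm:tensor-hoeffding}) applied to the random $t$-linear form $A_{K}(S') - A_{K}$, where $A_{K}(S')$ denotes the normalized adjacency form of $\cay^{(t)}(\Gamma,q,S')$. First I would write the normalized adjacency form of $\cay^{(t)}(\Gamma,q,\{{\bf g}\})$ for a single tuple ${\bf g}\in S$ as an explicit $t$-linear form $M_{\bf g}$ on $\R^V$, using~\eqref{eq:permcount}: symmetrizing over $\sigma\in S_t$ the product of the indicator evaluations at the permutations $\pi_j(u) = u^{q_j}g[j]$, and dividing by $t!$. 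Since $S'$ consists of $k$ i.i.d.\ uniform samples from $S$, the form $A_{K}(S')$ is the empirical average $\frac1k\sum_{i=1}^k M_{{\bf g}_i}$, and its expectation over the choice of a single uniform ${\bf g}\in S$ is exactly $A_{K}$ (each coset representative is equally likely, and the union over $S$ gives $K$). Thus $A_{K}(S') - A_{K} = \frac1k\sum_{i=1}^k (M_{{\bf g}_i} - \E M_{{\bf g}})$ is an average of i.i.d.\ centered random $t$-linear forms, and $\lambda_K$ is precisely the $(\ell_t,\dots,\ell_t)$-norm of this average.

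The next step is to verify that the summands $M_{\bf g}$ take values in the relevant generalization of the Birkhoff polytope referenced in the abstract, so that Theorem~\ref{thm:tensor-hoeffding} applies with the right parameters. Concretely, for each fixing of one of the $t$ slots to a standard basis vector $1_{\{v\}}$ and the rest to $1_V$, the form $M_{\bf g}$ evaluates to $1$ (it is $1$-regular after normalization), which is the doubly-stochastic-type constraint; and the $t!$-fold symmetrization over $\sigma$ together with the permutation structure of the $\pi_j$ means each $M_{\bf g}$ is a convex combination (over $\sigma$) of "permutation tensors" $1_V^{\otimes 0}$-decorated products of permutation matrices — i.e.\ it lies in the permanent/Birkhoff-type polytope that Theorem~\ref{thm:tensor-hoeffding} is tailored to. I would then plug $n = |V| = |\Gamma|$ and the number of samples $k$ into the deviation bound: Theorem~\ref{thm:tensor-hoeffding} should give that $\|\frac1k\sum_i (M_{{\bf g}_i}-\E M_{\bf g})\|_{\tnorm} \le \eps$ with probability at least $1/2$ once $k \ge c(t,\eps)\, n^{1-1/t}(\log n)^{t+1/2}$; matching the exact power of $\log n$ and the $n^{1-1/t}$ factor is exactly what that theorem is designed to produce (the $n^{1-1/t}$ coming from the $\ell_t$ unit balls having $\sim n^{1-1/t}$-bounded $\ell_2$-diameter in the relevant sense, and the $(\log n)^{t}$ from the Dudley integral over $\eps$-nets of a $t$-fold product of such balls, with the extra $\sqrt{\log n}$ from the subgaussian tail integration).

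The main obstacle I anticipate is the bookkeeping to show that the hypotheses of Theorem~\ref{thm:tensor-hoeffding} are met with the correct normalization and boundedness constants, uniformly over the (arbitrary) group $\Gamma$, the multiset $S$, and the exponent vector $q$ — in particular that the only feature of $S$ that matters is its size (which cancels in the averaging) and that the permutation structure of $u\mapsto u^{q_j}g[j]$ is enough to land in the Birkhoff-type polytope regardless of whether $\Gamma$ is abelian. A secondary technical point is handling the symmetrization over $\sigma\in S_t$: one must check that replacing the symmetrized form by a single term $\sigma = \mathrm{id}$ (which would be cleaner) either is harmless by symmetry of the $\tnorm$ norm or, if not, that the convex combination over $\sigma$ only helps (it is an average of objects each of which satisfies the needed bound). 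Once these reductions are in place, the probabilistic heavy lifting is entirely inside Theorem~\ref{thm:tensor-hoeffding}, and the proof of Theorem~\ref{thm:hypar} is a short wrapper. I would close by noting the specialization to $t$-APs with $C$ as in~\eqref{eq:CtAP} and either choice of $q$, recovering the $c(t,\eps)p^{(1-1/t+o(1))n}$ bound on the size of the arithmetic expander (here $n$ in the theorem is $|\Gamma| = p^{n}$, so $n^{1-1/t}(\log n)^{t+1/2} = p^{(1-1/t)n}\cdot \mathrm{poly}(n) = p^{(1-1/t+o(1))n}$).
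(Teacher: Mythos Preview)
Your proposal is correct and follows essentially the same route as the paper: write the normalized adjacency form of $H=\cay^{(t)}(\Gamma,q,S')$ as the empirical average $\frac{1}{k}\sum_i A_{{\bf g}_i}$ of single-generator forms, observe that each $A_{\bf g}\in\Pi_n^{(t)}$ (plane sub-stochastic, by the $(t!)$-regularity established in Section~\ref{sec:hypergraphs}) with $\Exp[A_{\bf g}]=A_K$, and then invoke Theorem~\ref{thm:tensor-hoeffding} with $p=t$, where $\sigma_{t,t}(n)=n^{\frac12-\frac{1}{2t}}(\log n)^{t+\frac12}$ gives exactly the stated value of~$k$. The obstacles you flag (uniform boundedness over arbitrary $\Gamma,S,q$, and the $S_t$-symmetrization) are non-issues here: plane sub-stochasticity follows directly from regularity of the permutation hypergraph and does not depend on commutativity of~$\Gamma$ or on~$S$, and the symmetrization is already baked into the definition of the adjacency form, so no separate argument is needed.
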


\subsection{A deviation bound for sums of random tensors}

Our proof of Theorem~\ref{thm:hypar} follows similar lines as
a slick proof of Theorem~\ref{thm:ar-spec} due to Landau and Russel~\cite{Landau:2004}.
Their proof is based on a matrix-valued deviation inequality called the matrix-Chernoff bound.
One can also use the following matrix version of the Hoeffding bound, which follows from a non-commutative  Khintchine inequality of Tomczak-Jaegermann~\cite{Tomczak-Jaegermann:1974} (see Appendix~\ref{sec:matrix-Hoeffding})
and which is more in line with the tools we shall use below.

\begin{theorem}[Matrix Hoeffding bound]\label{thm:matrix-Hoeffding}
There exist absolute constants~$c,C\in (0,\infty)$ such that the following holds.
Let~$A_1,\dots, A_k\in\R^{n\times n}$ be independent random matrices such that~$\|A_i\|_{S_\infty} \leq 1$ for each~$i\in[k]$.
Then, for any~$\eps>0$, we have
\beqn
\Pr\Big[\Big\|\frac{1}{k}\sum_{i=1}^k \big(A_i - \Exp[A_i]\big)\Big\|_{S_\infty} > \eps\Big]
\leq
C\exp\Big(-\frac{ck\eps^2}{\log n}\Big).
\eeqn
\end{theorem}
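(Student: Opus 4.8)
The plan is to prove this by the moment method, with the non-commutative Khintchine inequality of Tomczak--Jaegermann playing the role of the central analytic input, and then to convert a moment bound into a tail bound via Markov's inequality and an optimization over the moment order. Write $B_i = A_i - \Exp[A_i]$, so that the $B_i$ are independent, mean-zero, and satisfy $\|B_i\|_{S_\infty}\le 2$ almost surely, and set $M = \frac1k\sum_{i=1}^k B_i$. It suffices to bound $\Exp\|M\|_{S_\infty}^{2m}$ for every positive integer $m$ and then choose $m$ well.

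First I would symmetrize. Since the $B_i$ are independent and mean-zero, introducing independent Rademacher signs $\eps_1,\dots,\eps_k$ gives $\Exp\|\sum_i B_i\|_{S_{2m}}^{2m}\le 2^{2m}\,\Exp\|\sum_i\eps_i B_i\|_{S_{2m}}^{2m}$, and since the operator norm is dominated by the Schatten-$2m$ norm we get $\Exp\|M\|_{S_\infty}^{2m}\le (2/k)^{2m}\,\Exp_B\Exp_\eps\|\sum_i\eps_i B_i\|_{S_{2m}}^{2m}$. Conditioning on the $B_i$ and applying the non-commutative Khintchine inequality in $S_{2m}$ to the Rademacher sum $\sum_i\eps_i B_i$ bounds the inner expectation by $(C\sqrt{2m})^{2m}$ times the maximum of $\|(\sum_i B_iB_i^*)^{1/2}\|_{S_{2m}}^{2m}$ and $\|(\sum_i B_i^*B_i)^{1/2}\|_{S_{2m}}^{2m}$, where $C$ is an absolute constant. (If one prefers the self-adjoint form of Khintchine, one first replaces each $B_i$ by its Hermitian dilation, the $2n\times 2n$ block matrix with off-diagonal blocks $B_i$ and $B_i^*$, which has the same operator norm; this only affects constants.)

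Next I would bound the two ``variance'' terms crudely: because $\|B_i\|_{S_\infty}\le 2$, we have $\sum_i B_iB_i^*\preceq 4kI$ and likewise $\sum_i B_i^*B_i\preceq 4kI$, so each of these positive semidefinite matrices has operator norm at most $4k$ and hence trace of its $m$-th power at most $n(4k)^m$; thus each Schatten term is at most $n(4k)^m$. Putting everything together yields $\Exp\|M\|_{S_\infty}^{2m}\le n\,(C_0 m/k)^m$ for an absolute constant $C_0$, and Markov's inequality gives
\[
\Pr\big[\|M\|_{S_\infty}>\eps\big]\le n\Big(\frac{C_0 m}{\eps^2 k}\Big)^m\qquad\text{for every integer }m\ge 1.
\]
Finally I would optimize: taking $m\asymp \eps^2 k$ (whenever this exceeds $1$) makes the right-hand side $n\,e^{-\Omega(\eps^2 k)}$, so in the regime $\eps^2 k\gtrsim\log n$ this already implies the claimed bound $C\exp(-c\eps^2 k/\log n)$ with room to spare, whereas in the complementary regime $\eps^2 k\lesssim\log n$ the quantity $C\exp(-c\eps^2 k/\log n)$ is bounded below by an absolute constant, so after enlarging $C$ the inequality holds there trivially.

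I expect the only genuinely non-routine ingredient to be the non-commutative Khintchine inequality itself; the symmetrization, the operator-norm bound on the variance terms, and the choice of $m$ are all standard bookkeeping. The one point requiring a little care is that the moment method inherently produces a factor of $n$ in front, so the $\log n$ in the denominator of the exponent of the final bound is not the result of a wasteful estimate but precisely the cost of absorbing that prefactor into a prefactor-free exponential tail.
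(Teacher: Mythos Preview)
Your argument is correct, but the route differs from the paper's. The paper first invokes Tomczak--Jaegermann's \emph{first-moment} $S_\infty$ bound
\[
\Exp\Big\|\sum_i \epsilon_i A_i\Big\|_{S_\infty}\le C\sqrt{\log n}\Big(\sum_i\|A_i\|_{S_\infty}^2\Big)^{1/2},
\]
upgrades to all moments via Kahane--Khintchine (so that $(\Exp Z^p)^{1/p}\le \sigma\sqrt{p}$ with $\sigma\asymp\sqrt{(\log n)/k}$), and then applies an exponential Markov argument to $\Exp[e^{\alpha Z^2}]$; the $\log n$ in the exponent comes from the $\sqrt{\log n}$ already present in the first-moment inequality. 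You instead pass to $S_{2m}$, apply the Lust--Piquard form of non-commutative Khintchine with the operator square function, and use the crude bound $\sum_i B_iB_i^*\preceq 4kI$ to get $\Exp\|M\|_{S_\infty}^{2m}\le n(C_0m/k)^m$, after which a single polynomial Markov with $m\asymp \eps^2 k$ does the job. Your approach is the standard moment proof of matrix Bernstein/Hoeffding and in fact yields the sharper tail $n\exp(-c\eps^2 k)$, with the $\log n$ appearing only when one absorbs the prefactor $n$ into the exponent; the paper's approach is a little less sharp here but is deliberately modular, mirroring exactly the machinery (symmetrization, Kahane--Khintchine, exponential Markov) it reuses for the tensor Hoeffding bound. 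One terminological nit: what you actually use is the Lust--Piquard square-function inequality in $S_{2m}$, not the Tomczak--Jaegermann $S_\infty$ bound the paper cites; they are cousins, but not the same statement.
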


\begin{proof}[ of Theorem~\ref{thm:ar-spec}]
For~$s\in \Gamma$ let~$A_s \in \R^{\Gamma\times\Gamma}$ denote the adjacency matrix of the Cayley graph~$\cay(\Gamma,\{s\})$.
Since~$A_s$ is the average of two permutation matrices, $\|A_s\|_{S_\infty} \leq 1$.
Observe that if~$s\in \Gamma$ is a uniformly distributed element, then~$\Exp[A_s] = J/n$.
Moreover, since $A_G=(A_{g_1} + \cdots + A_{g_k})/k$,
the result now follows from Theorem~\ref{thm:matrix-Hoeffding}.
\end{proof}

The proof of Theorem~\ref{thm:hypar} is similarly based on a new deviation bound for multi-linear forms that belong to a generalization of the Birkhoff polytope (of doubly-stochastic matrices).
To define this polytope, we first consider the following generalization of a doubly-stochastic matrix.
Let $e_1,\dots, e_n\in\R^n$ be the standard basis vectors and let ${\bf 1}\in \R^n$ denote the all-ones vector.
A $t$-linear form $A$ on $\R^n$ is \emph{plane sub-stochastic} if 
$A$ is nonnegative on the standard basis vectors and if for every~$s\in[n]$, we have 
\begin{align}
A(e_s, {\bf 1},{\bf 1},\dots,{\bf 1}) &\leq 1 \nonumber\\
A({\bf 1}, e_s,{\bf 1},\dots,{\bf 1}) &\leq 1 \nonumber\\
&\:\:\vdots \nonumber\\
A({\bf 1}, {\bf 1},\dots,{\bf 1},e_s) &\leq 1. \label{eq:stoch}
\end{align}
Let~$\Pi_n^{(t)}$ be the polytope of $t$-linear forms~$A$ on $\R^n$ such that the form~$|A|$ defined by
$
|A|(e_{s_1},\dots,e_{s_t}) = |A(e_{s_1},\dots,e_{s_t})|,
$
for~$s_1,\dots,s_t\in [n]$,
is plane sub-stochastic.
Observe that the set $\Pi_n^{(2)}$ is the set of matrices $(a_{ij})_{i,j=1}^n$ such that $(|a_{ij}|)_{i,j=1}^n$ is doubly sub-stochastic.\footnote{Recall that the Birkhoff--von~Neumann Theorem states that the Birkhoff polytope is the convex hull of the set of $n\times n$ permutation matrices.
In~\cite{Linial:2014} it is shown that for~$t \geq 3$, the natural analogue of this \emph{fails} for the set of forms in~$\Pi_n^{(t)}$ that attain equalities in~\eqref{eq:stoch} and are nonnegative on standard basis vectors.
}
Our deviation bound then is as follows.

\begin{theorem}\label{thm:tensor-hoeffding}
For every integer~$t \geq 3$ there exist absolute constants $c,C\in (0,\infty)$ such that the following holds.
Let~$A_1,\dots,A_k$ be independent random elements over~$\Pi_n^{(t)}$.
Then, for any $p \geq 1$ and~$\eps > 0$,
\beq\label{eq:tensor-hoeffding}
\Pr\Big[\Big\| \frac{1}{k}\sum_{i=1}^k (A_i - \Exp[A_i])\Big\|_\pnorm > \eps \Big]
\leq
C\exp\Big(-\frac{ck\eps^2}{\sigma_{p,t}(n)^2}\Big),
\eeq
where
\beqn
\sigma_{p,t}(n) = n^{\frac12 - \frac1p}\, \max\{1, n^{1 - \frac{1}{2t}-\frac{t-1}{p}}\}\, (\log n)^{t + \frac12}.
\eeqn
\end{theorem}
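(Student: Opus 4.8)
The plan is to read $\bigl\|\tfrac1k\sum_{i=1}^k(A_i-\E A_i)\bigr\|_\pnorm$ as the supremum of a centred empirical process indexed by the product $T:=(B_{\ell_p^n})^{t}$ of $\ell_p$-unit balls and to control it by a chaining argument built on Dudley's integral inequality. By monotonicity of the $\ell_p$ norms one has $\|\cdot\|_\pnorm\le\|\cdot\|_{\ell_2,\dots,\ell_2}$ when $p\le2$, so it suffices to treat $p\ge2$. First I would symmetrize: a standard symmetrization step reduces the claim, up to absolute constants in the threshold and in the failure probability, to a tail bound for $\sup_{\mathbf x\in T}\bigl|\tfrac1k\sum_{i}\eps_iA_i(\mathbf x)\bigr|$, where $\eps_1,\dots,\eps_k$ are independent Rademacher signs independent of the $A_i$. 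Conditionally on $A_1,\dots,A_k$ this is a sub-Gaussian process in the $\eps_i$ for the pseudometric $d_A(\mathbf x,\mathbf y)^2=\tfrac1{k^2}\sum_{i}(A_i(\mathbf x)-A_i(\mathbf y))^2$, so the high-probability form of Dudley's inequality gives
\[
\Pr_{\eps}\Bigl[\sup_{\mathbf x\in T}\Bigl|\tfrac1k{\textstyle\sum_i}\eps_iA_i(\mathbf x)\Bigr|>C\!\!\int_0^{D_A}\!\!\!\sqrt{\log N(T,d_A,u)}\;du+s\Bigr]\le e^{-cs^2/D_A^2},\qquad D_A:=\diam_{d_A}(T).
\]
It then remains to show, \emph{uniformly} in $A_1,\dots,A_k$, that $D_A\lesssim\sigma_{p,t}(n)/\sqrt k$ and that the entropy integral is $\lesssim\sigma_{p,t}(n)/\sqrt k$.

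This uniformity is exactly what the polytope structure of $\Pi_n^{(t)}$ buys. For $A\in\Pi_n^{(t)}$ every entry of $A$ has absolute value $\le1$, and since $|A|$ is plane sub-stochastic, an AM--GM estimate at exponent $t$ on $\sum_{\mathbf s}|A|_{\mathbf s}\prod_j|x[j]_{s_j}|$ yields $\|A\|_\tnorm\le1$, hence $\|A\|_\pnorm\le\max\{1,n^{1-t/p}\}$ again by monotonicity of $\ell_p$ norms. Since $|A_i(\mathbf x)-A_i(\mathbf y)|\le2\|A_i\|_\pnorm$, this already gives $D_A\le 2\max\{1,n^{1-t/p}\}/\sqrt k$, and one checks $\max\{1,n^{1-t/p}\}\lesssim\sigma_{p,t}(n)$ for $p\ge2$. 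For the entropy integral one needs that $A\in\Pi_n^{(t)}$ obeys bounds \emph{stronger} than the generic $\|\cdot\|_\pnorm$ one when evaluated on structured arguments — e.g.\ on a vector of small $\ell_\infty$-norm supported on few coordinates, handled through the per-entry bound $\le1$ and the sub-stochasticity of the lines. Expanding $A_i(\mathbf x)-A_i(\mathbf y)$ multilinearly as a telescoping sum over the $t$ coordinates and applying such bounds to the increments $x[j]-y[j]$ that arise at a given chaining scale, one dominates $d_A$ by a deterministic metric on $T$ whose modulus of continuity improves with the scale.

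The covering numbers of $T$ are then estimated by a probabilistic construction of $\eps$-nets for $B_{\ell_p^n}$ in a weak ($\ell_\infty$-type) metric: to approximate $x\in B_{\ell_p^n}$ one samples coordinate atoms $\sign(x_{i_\ell})|x_{i_\ell}|^{1-p}e_{i_\ell}$ with $\Pr[i_\ell=i]\propto|x_i|^p$ (after truncating the tiny coordinates) and averages $m$ of them, producing an $m$-sparse vector within $\lesssim\sqrt{(\log n)/m}$ of $x$ and ranging over a set of size $n^{O(m)}$; at small scales this is far smaller than the volumetric bound $\log N\lesssim n\log(1/u)$ that one uses at large scales. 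Taking products of these nets over the $t$ coordinates, and calibrating the sparsity $m$ at each dyadic scale $u$ to the resolution of the scale-dependent metric from the previous paragraph, gives a net of $T$ of log-size $\lesssim\poly(\log n)\,u^{-2}$ for small $u$ and $\lesssim n\log(1/u)$ for large $u$, whose contributions to $\int_0^{D_A}\sqrt{\log N(T,d_A,u)}\,du$ sum to $\lesssim\sigma_{p,t}(n)/\sqrt k$; here the exponents $n^{1/2-1/p}$ and $n^{1-1/(2t)-(t-1)/p}$ record, respectively, the $d_A$-diameter of $T$ and the scale at which the probabilistic net overtakes the volumetric one, while the factor $(\log n)^{t+1/2}$ comes from the $t$-fold product together with the per-level net size.

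Feeding the diameter bound and the entropy integral back into Dudley's inequality gives, uniformly in $A_1,\dots,A_k$, $\Pr_\eps[\sup_{\mathbf x\in T}|\tfrac1k\sum_i\eps_iA_i(\mathbf x)|>\eps]\le Ce^{-ck\eps^2/\sigma_{p,t}(n)^2}$ for all $\eps>0$ (the range $\eps\lesssim\sigma_{p,t}(n)/\sqrt k$ being absorbed into $C$); averaging over the $A_i$ and undoing the symmetrization of the first paragraph proves the theorem. I expect the genuine obstacle to be the content of the third paragraph: obtaining covering-number estimates for products of $\ell_p$-balls that are simultaneously sharp at small scales — where the probabilistic $\eps$-net must beat the volumetric count — and at large scales, and then calibrating the sparsity levels across the $t$ coordinates so that the Dudley sum reproduces \emph{exactly} $\sigma_{p,t}(n)$, with its precise powers of $n$ and of $\log n$, rather than a lossy extra factor.
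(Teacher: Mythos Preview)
Your plan shares its three main ingredients with the paper's proof---symmetrization, Dudley's inequality, and a Maurey-type probabilistic net construction exploiting plane sub-stochasticity---but organizes them differently in two places. First, the paper converts the tail bound to a moment problem: it proves an \emph{expectation} bound $\E\bigl\|\sum_i\eps_iA_i\bigr\|_\pnorm\lesssim\sqrt{k}\,\sigma_{p,t}(n)$ for fixed $A_i\in\Pi_n^{(t)}$ (Theorem~\ref{thm:process}), lifts this to all moments via the Kahane--Khintchine inequality, and concludes with an exponential Markov argument; you instead invoke the high-probability form of Dudley conditionally on the $A_i$. Either route works. Second, and this is the substantive difference, the paper does \emph{not} run Dudley over the full product $T=(B_{\ell_p^n})^t$. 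It first decomposes $B_{\ell_p^n}$ dyadically into about $\log n$ shells of vectors whose nonzero entries have comparable magnitude (Lemma~\ref{lem:dyadic}), reducing by convexity and multilinearity to products of the discrete sets $\Cball_d^n=\{x\in\{-1,0,1\}^n:\|x\|_{\ell_0}=d\}$, and then applies Dudley separately on each such product. On $\Cball_{d_1}^n\times\cdots\times\Cball_{d_t}^n$ plane sub-stochasticity gives the clean pointwise bound $|A_i(\mathbf x)|\le\min_sd_s$ and, in the Maurey step (Lemma~\ref{lem:maurey}), a variance estimate on $A_i(\tilde{\mathbf x})$ yielding covering numbers of order $n^{O(\max_sd_s)/\eps^{2/t}}$. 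The powers of $n$ in $\sigma_{p,t}(n)$ then emerge from balancing the per-level Dudley integrals against the dyadic weights $2^{-(r_1+\cdots+r_t)/p}$, and the factor $(\log n)^{t+1/2}$ from summing $(\log n)^t$ levels times a $\sqrt{\log n}$ inside each integral.

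This dyadic reduction is precisely what resolves the obstacle you flag in your last paragraph. Your proposal to dominate $d_A$ by a ``scale-dependent deterministic metric'' on $T$ and to net $B_{\ell_p^n}$ directly by coordinate sampling is in the same spirit, but you never write down the metric, and without first passing to sign vectors of fixed support it is unclear how plane sub-stochasticity alone furnishes the variance control that Maurey's sampling needs across all of $B_{\ell_p^n}$. One smaller correction: your opening reduction ``it suffices to treat $p\ge2$'' does not recover the stated bound for $p<2$, since $\sigma_{p,t}(n)$ \emph{decreases} as $p$ drops below~$2$ (for $t\ge3$ one has $\sigma_{1,t}(n)=n^{-1/2}(\log n)^{t+1/2}<\sigma_{2,t}(n)$), so the $p=2$ tail is weaker than the $p<2$ claim.
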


For example, for $t \geq 3$, we have $\sigma_{2, t}(n)  = (\log n)^{t + \frac12}$, $\sigma_{t,t}(n) = n^{\frac12 - \frac{1}{2t}}(\log n)^{t + \frac12}$ and $\sigma_{\infty, t}(n) = n^{\frac32 - \frac{1}{2t}}(\log n)^{t + \frac12}$.
The proof of Theorem~\ref{thm:hypar} is now nearly identical to the proof of the Alon--Roichman theorem shown above.

\begin{proof}[ of Theorem~\ref{thm:hypar}]
Let $H = \cay^{(t)}(\Gamma, q,S')$.
For~${\bf g}\in \Gamma^{t}$ let~$A_{\bf g}:\R^\Gamma\times\cdots\times\R^\Gamma\to\R$ be the adjacency form of $\cay(\Gamma,q, \{{\bf g}\})$ and recall from Section~\ref{sec:hypcal} that $A_{\bf g}$ is plane sub-stochastic.
Observe that if~${\bf g}$ is uniform over~$S$, then~$\Exp[A_{\bf g}] = A_K$.
Finally, since $A_H = (A_{{\bf g}_1} + \cdots + A_{{\bf g}_k})/k$, the result follows from Theorem~\ref{thm:tensor-hoeffding} (with $p = t$) and the definition of~$\lambda_K(H)$.
\end{proof}

\begin{remark}[Sub-optimality of Theorem~\ref{thm:tensor-hoeffding}?]\label{rem:ldc}
We conjecture that when $p = t$, the dependence of~\eqref{eq:tensor-hoeffding} on~$n$ is sub-optimal in the sense that $\sigma_{t,t}$ can be replaced with some function $f(n) \leq o(n^{\frac12 - \frac{1}{2t}})$.
However, due to a result of Naor, Regev, and the first author~\cite{Briet:2012d} (see also~\cite{Briet:2015}), it must be the case that $f(n) \geq (\log n)^c$ for every $c > 1$.
Their result implies that for every $t\geq 3$ there exist $\eps(t)\in (0,1)$, $c(t) > 1$ such that the following holds.
For infinitely many~$n\in\N$, there exists a collection of $k = 2^{(\log\log n)^{c(t)}}$ forms $B_1,\dots,B_k\in \Pi_n^{(t)}$ such that for independent Rademacher random variables $\epsilon_1,\dots,\epsilon_k$ (satisfying $\Pr[\epsilon_i = +1] = \Pr[\epsilon_i=-1] = 1/2$), we have
\beqn
\Exp\Big[\Big\|\frac{1}{k}\sum_{i=1}^k\epsilon_iB_i\Big\|_\tnorm\Big] \geq \eps(t).
\eeqn
Setting $A_i = \epsilon_iB_i$, a standard calculation shows that the above expectation is at most
\beqn
\int_0^\infty \Pr\Big[\Big\| \frac{1}{k}\sum_{i=1}^k (A_i - \Exp[A_i])\Big\|_\tnorm > \eps \Big]\, d\eps
\leq
C\sqrt{\frac{f(n)}{k}}
\eeqn
for some absolute constant~$C\in (0,\infty)$, showing that~$f$ cannot be poly-logarithmic in~$n$.
\end{remark}

\subsection*{Open problems}
Our results leave open the problem of determining the minimal degree required for spectral expansion of random Cayley hypergraphs.
Remark~\ref{rem:ldc} could be interpreted as suggesting the intriguing possibility that, in stark contrast with the Alon--Roichman Theorem, this degree must be quasi-polynomial in the size of the group.
Another problem is to determine the optimal form of Theorem~\ref{thm:tensor-hoeffding}.
Finally, it is open if the straightforward generalization of the Expander Mixing Lemma given in Proposition~\ref{prop:genEML} below admits a converse for Cayley hypergraphs.
A converse was shown to hold for Cayley graphs by Kohayakawa, R\"odl, and Schacht~\cite{Kohayakawa:2016} and Conlon and Zhao~\cite{Conlon:2016}.

\subsection*{Acknowledgements}
J.~B. would like to thank Jozef Skokan for pointing him to~\cite{Lenz:2015} and Zeev Dvir and Sivakanth Gopi for helpful discussions.

\section{Proof of Theorem~\ref{thm:apexp}}

In this section we prove Theorem~\ref{thm:apexp}.
To rephrase this result, consider for a set $D\subseteq \F_p^n$ the Cayley hypergraph 
$$L_D = \cay^{(p)}\big(\F_p^n, {\bf 1}, \{\{0,y,2y,\dots, (p-1)y\}\st y\in D\}\big).
$$
Then, by Definition~\ref{def:arithexp}, Theorem~\ref{thm:apexp} says that for every $D\subseteq \F_p^n$ of size $|D| < \delta(p)n^{p-1}$, the hypergraphs $L_D$ and $L_{\F_p^n}$ satisfy $\lambda_{L_{\F_p^n}}(L_D) \geq \eps(p)$.
The first ingredient of the proof is the following straightforward generalization of the Expander Mixing Lemma~\cite{Alon:1988}, which follows directly from the above definitions.

\begin{proposition}[Generalized Expander Mixing Lemma]\label{prop:genEML}
Let $K = \cay^{(t)}(\Gamma, q,S)$ be a Cayley hypergraph, $S'\subseteq S$ be a multiset and $H = \cay^{(t)}(\Gamma, q,S')$.
Then, for every $T_1,\dots,T_t\subseteq \Gamma$,
\beqn
\big|A_H(1_{T_1},\dots,1_{T_t}) - A_K(1_{T_1},\dots,1_{T_t})\big| \leq \lambda_K(H)(|T_t|\cdots |T_t|)^{1/t}.
\eeqn
\end{proposition}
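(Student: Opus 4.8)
The plan is to read the inequality straight off the definitions in Section~\ref{sec:hypexp}, with the Cayley structure playing no role at all. Recall from~\eqref{eq:lambdaKH} that $\lambda_K(H) = \|A_H - A_K\|_\tnorm$, and that for a $t$-linear form $B$ on $\R^\Gamma$,
\[
\|B\|_\tnorm = \sup\Big\{\frac{B(x[1],\dots,x[t])}{\|x[1]\|_{\ell_t}\cdots\|x[t]\|_{\ell_t}} \st x[1],\dots,x[t]\in\R^\Gamma\smallsetminus\{{\bf 0}\}\Big\}.
\]
First I would dispose of the degenerate case in which some $T_j$ is empty: then $1_{T_j} = {\bf 0}$, so multilinearity of $A_H$ and of $A_K$ forces the left-hand side to vanish, and the right-hand side vanishes as well since $|T_j| = 0$. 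Hence we may assume every $T_j$ is nonempty, so each $1_{T_j}$ is a nonzero vector of $\R^\Gamma$ with $\|1_{T_j}\|_{\ell_t} = |T_j|^{1/t}$.

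The one point that deserves a word is that the supremum above is of a \emph{signed} ratio, whereas we want to bound an absolute value. This is immediate from multilinearity: replacing $x[1]$ by $-x[1]$ negates $B(x[1],\dots,x[t])$ while leaving every $\ell_t$-norm unchanged, so the set of values over which the supremum is taken is symmetric about $0$, and therefore $|B(x[1],\dots,x[t])| \leq \|B\|_\tnorm\,\|x[1]\|_{\ell_t}\cdots\|x[t]\|_{\ell_t}$ for all nonzero $x[1],\dots,x[t]$. Applying this with $B = A_H - A_K$ and $x[j] = 1_{T_j}$, and using $(A_H - A_K)(1_{T_1},\dots,1_{T_t}) = A_H(1_{T_1},\dots,1_{T_t}) - A_K(1_{T_1},\dots,1_{T_t})$, gives
\[
\big|A_H(1_{T_1},\dots,1_{T_t}) - A_K(1_{T_1},\dots,1_{T_t})\big|
\leq \lambda_K(H)\prod_{j=1}^t|T_j|^{1/t}
= \lambda_K(H)\,(|T_1|\cdots|T_t|)^{1/t},
\]
which is exactly the claimed bound.

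I do not expect any genuine obstacle here: all the content of the statement is packed into the choice of the norm $\|\cdot\|_\tnorm$, for which indicator vectors of sets have the convenient $\ell_t$-norm $|T|^{1/t}$, and the proof reduces to a single substitution once the sign-symmetrization above has been observed. It is worth remarking that nothing uses that $H$ and $K$ are Cayley hypergraphs, nor that $H$ is a sub-hypergraph of $K$; the same one-line argument establishes the inequality for any pair of regular $t$-uniform hypergraphs on the same vertex set for which $\lambda_K(H)$ is defined.
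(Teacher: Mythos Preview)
Your proof is correct and is exactly what the paper intends: it states that the proposition ``follows directly from the above definitions'' and gives no further argument, so your unpacking of~\eqref{eq:lambdaKH} together with $\|1_{T_j}\|_{\ell_t}=|T_j|^{1/t}$ and the sign-symmetrization remark is precisely the omitted one-line verification. Your observation that the Cayley structure and the inclusion $S'\subseteq S$ are irrelevant is also accurate.
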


To prove the theorem it thus suffices to show that for every $D\subseteq \F_p^n$ of size $|D|< \delta(p)n^{p-1}$, there exist $T_1,\dots,T_p\subseteq \F_p^n$ such that on the one hand, $A_{L_D}(1_{T_1},\dots,1_{T_p}) = 0$, while on the other hand,   $A_{L_{\F_p^n}}(1_{T_1},\dots,1_{T_p})  \geq \eps(p)(|T_1|\cdots|T_p|)^{1/p}$, which is precisely what we shall do with sets satisfying $T_2 = T_3 = \dots = T_p$.
We achieve this by constructing a combinatorial rectangle $R = T_1\times\cdots\times T_p$ that contains many lines, but no lines with direction in~$D$, by which we mean the following.
Define the \emph{line through $x\in\F_p^n$ in direction~$d\in \F_p^n$}, denoted~$\ell_{x,d}$,  to be the sequence $(x + \lambda d)_{\lambda \in \F_p}$.
Say that $R$
\emph{contains} $\ell_{x,d}$ if $x + \lambda d\in T_{\lambda+1}$ for every $\lambda\in\F$.
Denote by $\mathcal L_D(R)$ the number of lines contained in~$R$ that have direction~$y\in D$.
The following proposition shows why considering lines through rectangles suffices.

\begin{proposition}\label{prop:LDlines}
Let $D,T_1,T_2\subseteq \F_p^n$ so that $T_1$ and $T_2$ are disjoint, and let $R$ be the $p$-dimensional combinatorial rectangle $T_1\times T_2\times\cdots\times T_2$.
Then,
$$
A_{L_D}(1_{T_1},1_{T_2},\dots,1_{T_2})
=
\mathcal L_D(R)/|D|.$$
\end{proposition}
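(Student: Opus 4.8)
The plan is to unwind both sides of the claimed identity directly from the definitions in Sections~\ref{sec:hypergraphs}--\ref{sec:hypcal} and match them term by term. First I would recall that $L_D = \cay^{(p)}(\F_p^n, {\bf 1}, \{{\bf g}_y \st y\in D\})$, where the coset representative ${\bf g}_y$ corresponds to the AP $\{0,y,2y,\dots,(p-1)y\}$ through zero, and that this is a $(p!)|D|$-regular hypergraph, so $A_{L_D} = \overline{A}_{L_D}/(p!\,|D|)$. By~\eqref{eq:permcount}, with the permutations $\pi_{\lambda+1}(v) = v + \lambda y$ for $\lambda\in\F_p$, the unnormalized adjacency form evaluated on indicator vectors is
\beqn
\overline{A}_{L_D}(1_{T_1},1_{T_2},\dots,1_{T_2})
=
\sum_{y\in D}\sum_\sigma \sum_{v\in\F_p^n}
1_{T_1}\big(v + (\sigma(1)-1)y\big)\,
1_{T_2}\big(v + (\sigma(2)-1)y\big)\cdots
1_{T_2}\big(v + (\sigma(p)-1)y\big),
\eeqn
where $\sigma$ ranges over the $p!$ permutations of $[p]$.

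The key step is to observe that because $T_1$ and $T_2$ are disjoint, at most one of the $p$ coordinates of a contributing tuple can land in $T_1$; and since exactly one coordinate is tested against $1_{T_1}$, the inner summand is nonzero only when the unique index $j$ with $\sigma(j)=1$ plays the role of the $T_1$-slot. Writing $\lambda = j-1\in\F_p$ and $x = v$, a term survives precisely when $x+\lambda' y \in T_2$ for all $\lambda'\neq\lambda$ and $x + \lambda y$... wait — more cleanly: reparametrize by setting the base point so that the $T_1$-coordinate is the zeroth one. For each fixed $y$ and each of the $p$ choices of which slot is the $T_1$-slot (equivalently, which $\lambda_0\in\F_p$ has $\sigma^{-1}(1)-1=\lambda_0$), the remaining $(p-1)!$ permutations of the other slots all give the same indicator product, namely $1_{T_1}(v+\lambda_0 y)\prod_{\lambda\neq\lambda_0}1_{T_2}(v+\lambda y)$. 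Substituting $x = v+\lambda_0 y$ shows this counts exactly the lines $\ell_{x',y'}$ (after a harmless relabelling $y' = $ a scalar multiple of $y$, using that the AP through $0$ with step $y$ and the line in direction $y$ coincide as unordered sets and $p$ is prime so every nonzero $\lambda$ is invertible) contained in $R = T_1\times T_2\times\cdots\times T_2$ with the first coordinate in $T_1$; summing over the $p$ slot-choices and the $y\in D$ gives $p\cdot(p-1)!\cdot\mathcal L_D(R) = p!\,\mathcal L_D(R)$. Dividing by the normalization $p!\,|D|$ yields $A_{L_D}(1_{T_1},1_{T_2},\dots,1_{T_2}) = \mathcal L_D(R)/|D|$.

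The main obstacle — really the only subtle point — is bookkeeping the combinatorics of the permutation symmetrization in~\eqref{eq:permcount} against the definition of "$R$ contains $\ell_{x,d}$" (which pins coordinate $\lambda$ to $T_{\lambda+1}$, i.e. $\lambda=0$ to $T_1$ and all others to $T_2$): one must check that the disjointness of $T_1,T_2$ forces the $T_1$-slot to be occupied by the base point of the line, that the $(p-1)!$ permutations fixing that slot contribute identically, and that the $p$ different choices of which arithmetic-progression position serves as the base point correspond bijectively to lines (here primality of $p$ is used, so that $\{0,y,2y,\dots,(p-1)y\} = \{x' , x'+y', \dots\}$ re-centred at any of its points is again an AP with an invertible step, matching the unordered line $\ell_{x,d}$ exactly once). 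I would also double-check the degenerate cases $y=0$ or $T_1\cap T_2=\emptyset$ with $\mathcal L_D(R)=0$ to confirm the identity is vacuously consistent there. Everything else is a direct substitution of definitions and counting.
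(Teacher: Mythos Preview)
Your proposal is correct and follows essentially the same route as the paper: expand the adjacency form via~\eqref{eq:permcount}, use disjointness of $T_1,T_2$ to see that for each contributing pair $(v,y)$ the $(p-1)!$ permutations with a common value of $\sigma(1)$ all give the same indicator product, and then identify the remaining sum as $p\cdot\mathcal L_D(R)$ via the shift $x=v+(\sigma(1)-1)y$ (the paper packages this last step as a $p$-to-$1$ map $\phi:P\to\mathcal L_D(R)$). Two small clean-ups: your parenthetical should read $\sigma(1)-1=\lambda_0$ rather than $\sigma^{-1}(1)-1=\lambda_0$, and the ``harmless relabelling'' and primality concerns you flag are unnecessary---after the shift the line is already $\ell_{x,y}$ with the original direction $y\in D$, no scalar multiple required, and the paper's proof makes no use of primality here.
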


\begin{proof}
Recall from Section~\ref{sec:hypcal} 
and multi-linearity, that
\begin{align}\label{eq:AD}
A_{L_D}(1_{T_1},1_{T_2}\dots,1_{T_2}) &= \sum_{z_1\in T_1}\cdots \sum_{z_p\in T_p}A_{L_D}(1_{\{z_1\}}, \dots, 1_{\{z_p\}}) \nonumber
 \\
&= \frac{1}{|D|p!}\sum_{y\in D}\sum_{x\in \F_p^n}\sum_{\sigma\in S_p}
1_{T_1}\big(x + (\sigma(1) - 1)y\big)
\cdots
1_{T_2}\big(x + (\sigma(p) - 1)y\big).
\end{align}
Consider a pair $x\in\F_p^n, y\in D$ such that the corresponding sum over~$\sigma\in S_p$ in~\eqref{eq:AD} is nonzero.
We claim that in this case, the sum equals~$(p-1)!$.
Indeed, if~$\sigma$ is a permutation such that the corresponding term in the sum equals~1, then since~$T_1$ and~$T_2$ are disjoint, a term corresponding to another permutation~$\sigma'$ is nonzero if and only if  $\sigma'(1) = \sigma(1)$.
Let $P\subseteq \F_p^n\times D$ be the set of such pairs for which the sum over~$\sigma$ is nonzero.  It follows that~\eqref{eq:AD} is equal to ${|P|}/{|D|p}$
and the lemma follows if $|P| = p\mathcal L_D(R)$.  

We compute the size of $P$.
Let $\phi:P\to\mathcal L_D(R)$ be the function $\phi((x, y)) = \ell_{x+(\sigma(1)-1)y, y}$ that maps a pair in $P$ to a line in $R$ where $\sigma$ is an arbitrary permutation that contributes to the corresponding sum in~\eqref{eq:AD}. 
To see that the image of $\phi$ contains only lines in $R$, observe that for every pair $(x, y)\in P$, and for $\lambda = \sigma(1)-1$, we have $x + \lambda y\in T_1$ and $x + \lambda' y\in T_2$ for every $\lambda' \in \F_p\smallsetminus \{\lambda\}$.
Moreover, $\phi$ is surjective, since for each line in $\ell_{x,y}\in \mathcal L_D(R)$, we have $(x,y)\in P$ because the term corresponding to the identity permutation  in~\eqref{eq:AD} is nonzero.

Next we show that for each $\ell_{x,y}\in\mathcal L_D(R)$, its pre-image under~$\phi$ has size exactly~$p$, which implies the proposition.
Let $(x', y')$ be a pair in $\phi^{-1}(\ell_{x, y})$.  Then $y' = y$ is fixed, and $x' = x+\lambda y$ for some $\lambda \in \F_p$. 
We claim that all such choices of $(x', y')$ are in $\phi^{-1}(\ell_{x, y})$.  
Indeed, for every $\lambda \in \F_p$, and $\sigma \in S_p$ such that $\sigma(1)-1 = \lambda$, we have that $x'+(\sigma(1)-1)y' \in T_1$ and $x'+(\sigma(i)-1)y' \in T_2$ for all other $i$.  
Therefore $\phi(x', y') = \ell_{x, y}$.
\end{proof}

Theorem~\ref{thm:apexp} will thus follow from the following result.

\begin{theorem}\label{thm:densecap}
Let $n \geq p^2$ and let $D\subseteq \F^n$ be a set of size $|D| < {n + p - 2\choose p-1}$.
Then, there exist disjoint sets $T_1,T_2\subseteq \F^n$ such that the $p$-dimensional  rectangle $T_1\times T_2\times\cdots\times T_2$ contains at least
$p^{2n + p - p^2}$
 lines, but no lines with direction in~$D$.
\end{theorem}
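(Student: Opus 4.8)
The plan is to use the polynomial method to produce a nonzero polynomial that vanishes on all directions in $D$ but not identically, and then carve the rectangle out of its zero set. Concretely, since $|D| < \binom{n+p-2}{p-1}$, which is exactly the dimension of the space of homogeneous polynomials of degree $p-1$ in $n$ variables over $\F_p$ (equivalently, the dimension of the degree-$\le p-1$ part after dehomogenizing, but homogeneous is the natural choice here since lines are the object), a dimension count shows there is a nonzero homogeneous polynomial $P\in\F_p[x_1,\dots,x_n]$ of degree $p-1$ with $P(y)=0$ for every $y\in D$. The key point about degree $p-1$ is that a line $\ell_{x,d}=(x+\lambda d)_{\lambda\in\F_p}$ interacts with $P$ via the univariate polynomial $\lambda\mapsto P(x+\lambda d)$, whose leading ($\lambda^{p-1}$) coefficient is exactly $P(d)$ by homogeneity; so if $P(d)\ne 0$ this univariate polynomial is a nonzero polynomial of degree exactly $p-1<p$, hence has at least one non-root in $\F_p$.

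Next I would set up the two sets. Let $T_2 = \{z\in\F_p^n : P(z)=0\}$ be (the complement-controlled version of) the zero set of $P$, and let $T_1 = \{z\in\F_p^n : P(z)\ne 0\}$ be its complement; these are disjoint by construction. Now consider any line $\ell_{x,d}$ with $d\in D$. Since $P(d)=0$, the univariate polynomial $g(\lambda)=P(x+\lambda d)$ has degree at most $p-2$. If the rectangle $T_1\times T_2\times\cdots\times T_2$ were to contain $\ell_{x,d}$, then $x+\lambda d\in T_2$ for all $\lambda\in\{1,\dots,p-1\}$, i.e.\ $g(\lambda)=0$ for those $p-1$ values of $\lambda$; a degree-$\le p-2$ polynomial with $p-1$ roots must be identically zero, so in particular $g(0)=P(x)=0$, meaning $x\in T_2$, not $T_1$ — contradiction. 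Hence the rectangle contains no line with direction in $D$.

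It remains to count the lines the rectangle does contain, and this is the step I expect to require the most care. A line $\ell_{x,d}$ (with $d\ne 0$, and counted appropriately up to reparametrization) lies in the rectangle iff $P(x)\ne 0$ and $P(x+\lambda d)=0$ for all $\lambda\in\{1,\dots,p-1\}$; equivalently, the degree-$\le p-1$ polynomial $g(\lambda)=P(x+\lambda d)$ has all of $1,\dots,p-1$ as roots but $0$ is not a root, which forces $g(\lambda)=c\lambda(\lambda-1)\cdots? $ — wait, rather $g$ must be (a scalar times) $\prod_{\mu=1}^{p-1}(\lambda-\mu) = \lambda^{p-1}-1$, so in fact $g(\lambda)=c(\lambda^{p-1}-1)$ with $c\ne 0$, i.e.\ the leading coefficient $P(d)=c\ne 0$ and $g(0)=-c\ne 0$ automatically. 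So the lines in the rectangle are exactly those $\ell_{x,d}$ with $P(d)\ne 0$ and $P(x+\lambda d)=0$ for $\lambda=1,\dots,p-1$, and for each such $d$ the constraint on $x$ is that $x$ lies in the intersection $\bigcap_{\lambda=1}^{p-1}\{P(\cdot+\lambda d)=0\}$, after which $x$ ranges over a translate-structured set; a line is determined by the pair $(d,x)$ up to the scaling $d\mapsto cd$ and corresponding shift, contributing a factor. The plan is: first lower-bound the number of $d$ with $P(d)\ne 0$ (a nonzero degree-$(p-1)$ polynomial over $\F_p$ has at most $(p-1)p^{n-1}$ roots by Schwartz–Zippel/DeMillo–Lipton, so at least $p^n-(p-1)p^{n-1}=p^{n-1}$ nonzeros); then for each good $d$, lower-bound the number of valid $x$; then divide by the overcounting factor ($p-1$ for the choice of nonzero scalar on $d$, times $p$ for the choice of basepoint $x$ along the line, roughly $p(p-1)\sim p^2$). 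The arithmetic should land on $p^{2n+p-p^2}$; I would expect the cleanest route is to observe that for fixed good $d$ the set of valid $x$ is itself a union of affine pieces whose total size is $\ge p^{n-?}$ by an inclusion–exclusion or rank argument on the $p-1$ hypersurfaces $\{P(\cdot+\lambda d)=0\}$, and the main obstacle is making that intersection count large enough rather than potentially empty — so one may instead want to choose $P$ more cleverly (e.g.\ forcing $P$ to factor, or taking $P$ to be a product of $p-1$ affine forms whenever $|D|$ is small enough to allow it) so that the zero sets are unions of hyperplanes and the count becomes an explicit product. I would pursue the affine-factorization choice of $P$ first, as it makes every hypersurface a union of hyperplanes and reduces the whole count to counting lines through an intersection of cosets, which is exactly a product of $p$-powers.
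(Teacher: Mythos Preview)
Your overall strategy---interpolate a nonzero homogeneous $P$ of degree $p-1$ vanishing on $D$, carve $T_1,T_2$ from level sets of $P$, and exclude lines with direction in $D$ via the observation that $\lambda\mapsto P(x+\lambda d)$ drops in degree when $P(d)=0$---is the right one, and your exclusion argument is valid. The gap is in the counting step, and it is not merely a matter of bookkeeping: with your choice $T_1=\{P\ne 0\}$, $T_2=\{P=0\}$, the rectangle can be empty of lines. Take $P(x)=x_1^{p-1}$, which the interpolation step is free to return whenever $D\subseteq\{x_1=0\}$. Membership of $\ell_{x,d}$ in your rectangle then forces $x_1+\lambda d_1=0$ for every $\lambda\in\{1,\dots,p-1\}$, hence $d_1=0$ and then $x_1=0$, contradicting $x\in T_1$. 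So neither your direct count (fix a $d$ with $P(d)\ne 0$ and intersect $p-1$ hypersurfaces in $x$) nor the factored-$P$ fallback can be salvaged in general: the first may have nothing to count, and the second would require covering an arbitrary $D$ by $p-1$ hyperplanes, which the hypothesis $|D|<\binom{n+p-2}{p-1}$ does not guarantee.

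The fix is to reverse the roles and sharpen $T_2$: set $T_1=Z(P)$ and take $T_2=\{x:P(x)=a\}$ for a single value $a\in\F_p^*$ chosen so that this level set is nonempty (such $a$ exists by Schwartz--Zippel, since $P\not\equiv 0$). The exclusion argument then becomes a Vandermonde argument: if $x,d\in Z(P)$ then $g(\lambda)=P(x+\lambda d)$ has both its constant and top coefficients equal to zero, so $g$ lies in the span of $\lambda,\lambda^2,\dots,\lambda^{p-2}$, which cannot take the constant value $a$ at all of $1,\dots,p-1$. The payoff of this reversal is that one line in the rectangle is now exhibited for free: take $x=0$ and any $y$ with $P(y)=a$; homogeneity and Fermat's little theorem give $P(\lambda y)=\lambda^{p-1}a=a$ for all $\lambda\ne 0$, while $P(0)=0$. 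The system $P(x)=0$, $P(x+\lambda y)=a$ for $\lambda=1,\dots,p-1$ is then $p$ equations of degree $p-1$ in $2n$ variables with a known solution, and the Chevalley--Warning theorem yields at least $p^{2n-p(p-1)}=p^{2n+p-p^2}$ solutions. This single-witness-plus-Chevalley--Warning step is the missing idea; an inclusion--exclusion or rank heuristic cannot replace it, because without a witness the intersection of the relevant hypersurfaces may simply be trivial.
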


\begin{proof}[ of Theorem~\ref{thm:apexp}]
Let $T_1,T_2\subseteq \F_p^n$ be as in Theorem~\ref{thm:densecap}.
Then, by Proposition~\ref{prop:LDlines}, we have $A_{L_D}(1_{T_1},1_{T_2},\dots,1_{T_2}) = 0$, but
\beqn
A_{L_{\F_p^n}}(1_{T_1},1_{T_2},\dots,1_{T_2}) 
\geq \frac{1}{p^n}p^{2n + p-p^2} \geq \frac{1}{p^{p^2-p}}|T_1|^{1/p}|T_2|^{(p-1)/p}.
\eeqn
The result now follows from Proposition~\ref{prop:genEML}.
\end{proof}

The proof of Theorem~\ref{thm:densecap} uses the polynomial method.
For the remainder of this section let $\F = \F_p$. 
For an $n$-variate polynomial $f\in \F[x_1,\dots,x_n]$ denote $Z(f) = \{x\in\F^n\st f(x) = 0\}$.

\begin{lemma}\label{lem:zero-one}
Let $f\in\F[x_1,\dots,x_n]$ be a homogeneous polynomial of degree at most~$p-1$.
Let $Z = Z(f)$ and let $a\in\F^*$ be such that the set $S = \{x\in\F^n\st f(x) = a\}$ is nonempty.
Then, the $p$-dimensional rectangle $ Z\times  S \times\cdots\times S$ contains no lines with directions $d\in Z$.
\end{lemma}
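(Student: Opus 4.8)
The plan is to argue directly from the definitions. A line in direction $d$ through a point $x$ is the sequence $(x + \lambda d)_{\lambda \in \F_p}$, and the rectangle $Z \times S \times \cdots \times S$ contains $\ell_{x,d}$ precisely when $x \in Z$ (the $\lambda=0$ coordinate) and $x + \lambda d \in S$ for every $\lambda \in \F_p \smallsetminus \{0\}$. So suppose toward a contradiction that $d \in Z$, i.e.\ $f(d) = 0$, and that some $\ell_{x,d}$ is contained in the rectangle. I would consider the univariate polynomial $g(\lambda) = f(x + \lambda d) \in \F[\lambda]$ obtained by restricting $f$ to this line.

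The key computation is to expand $g(\lambda)$ using homogeneity. Since $f$ is homogeneous of degree $e \le p-1$, writing $f$ as a sum of monomials and expanding $f(x+\lambda d)$ multilinearly in the $\lambda d$ terms, one gets $g(\lambda) = \sum_{j=0}^{e} \lambda^j f_j(x,d)$ where $f_0(x,d) = f(x)$, the top coefficient $f_e(x,d) = f(d)$ (the part of the expansion that picks $\lambda d$ in every factor), and the intermediate $f_j$ are the "mixed" terms. The two facts we are handed are $f(x) = 0$ (so $g(0) = 0$, i.e.\ the constant term vanishes) and $f(d) = 0$ (so the degree-$e$ coefficient vanishes), meaning $g$ actually has degree at most $e-1 \le p-2$. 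On the other hand, containment of the line in the rectangle says $g(\lambda) = a$ for all $\lambda \in \F_p^* $, i.e.\ for $p-1$ distinct values of $\lambda$. Thus $g(\lambda) - a$ is a polynomial of degree at most $p-2$ with at least $p-1$ roots over $\F_p$, forcing $g(\lambda) = a$ identically; in particular $g(0) = a \neq 0$, contradicting $g(0) = f(x) = 0$ (here we use $a \in \F^*$).

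I do not anticipate a serious obstacle: the argument is essentially the one-dimensional polynomial method, and the only mildly delicate point is making the degree bookkeeping precise — namely justifying that the coefficient of $\lambda^e$ in $f(x+\lambda d)$ is exactly $f(d)$ when $f$ is homogeneous of degree $e$. This is immediate by linearity of substitution together with Euler-type expansion of each monomial, but it is worth writing out one clean line (e.g.\ $f(x + \lambda d) = \sum_j \lambda^j \cdot [\text{degree-}j\text{ part in }d]$, and the degree-$e$ part in $d$ of a degree-$e$ homogeneous form is $f(d)$). Everything else — disjointness of $Z$ and $S$ is not even needed here, only $a \neq 0$ — follows mechanically. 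The degree hypothesis "at most $p-1$" is used exactly once and is tight: it is what guarantees that killing the top coefficient drops the degree below $p-1$, leaving room for the root count to bite.
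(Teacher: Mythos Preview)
Your proof is correct and follows essentially the same approach as the paper: both restrict $f$ to the line, use homogeneity together with $f(x)=f(d)=0$ to force the restricted polynomial to have degree at most $p-2$, and then derive a contradiction from the $p-1$ prescribed values on $\F_p^*$. The only cosmetic difference is that the paper phrases the final contradiction via the full rank of a $(p-1)\times(p-1)$ Vandermonde matrix, whereas you use the equivalent root-counting argument directly.
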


\begin{proof}
Recall that a Vandermonde matrix is a square matrix of the form
\beqn
\begin{bmatrix}
1 & a_1 & a_1^2 & \cdots & a_1^{d-1}\\
1 & a_2 & a_2^2 & \cdots & a_2^{d-1}\\
1 & a_3 & a_3^2 & \cdots & a_3^{d-1}\\
\vdots & \vdots & \vdots & \ddots &\vdots\\
1 & a_d & a_d^2 & \cdots & a_d^{d-1}
\end{bmatrix}.
\eeqn
We record the well-known and easy fact that if the~$a_i$ are distinct, then the above matrix has a nonzero determinant and therefore full rank.

For a contradiction, suppose there does exist such a line $\ell_{x,d}$ with $x,d\in Z$.
Consider the polynomial $g\in\F[\lambda]$ defined by $g(\lambda) = a^{-1}f(x + \lambda d)$.
Since~$f$ has degree at most~$p-1$, so does~$g$.
Moreover, since $x,d\in Z$ and since~$f$ is homogeneous, the constant term and the coefficient of~$\lambda^{p-1}$ of~$g$ are zero.
Our assumption that $x + \lambda d\in S$ for every $\lambda\in [p-1]$ then implies
\beqn
g(\lambda) = \sum_{i=1}^{p-2}c_i\lambda^i = a^{-1}f(x + \lambda d) = 1,
\quad
\lambda\in[p-1].
\eeqn
Hence, the all-ones vector ${\bf 1}\in \F^d$ lies in the linear span of the vectors $v_i = (1, 2^i, 3^i, \dots, (p-1)^i)$ for $i\in [p-2]$, since ${\bf 1} = c_1v_1 + \cdots + c_{p-1}v_{p-2}$.
But the matrix $[{\bf 1}, v_1,\dots, v_{p-2}]$ is a  full-rank Vandermonde matrix, which is a contradiction.
\end{proof}

The following basic and standard result (see for example~\cite{Tao:2014}) shows that for any small set $D\subseteq\F^n$, we can always find a low-degree homogeneous polynomial~$f$ such that $D\subseteq Z(f)$.

\begin{lemma}[Homogeneous Interpolation]\label{lem:hom-interpolation}
For every $D\subseteq \F^n$ of size $|D| < {n+d-1\choose d}$
there exists a nonzero homogeneous polynomial $f\in\F[x_1,\dots,x_n]$ of degree~$d$ such that $D\subseteq Z(f)$.
\end{lemma}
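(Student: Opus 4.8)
The plan is to prove Lemma~\ref{lem:hom-interpolation} by a dimension-counting argument on the space of homogeneous polynomials. First I would set up the relevant linear-algebraic picture: the set of homogeneous polynomials of degree~$d$ in $\F[x_1,\dots,x_n]$ forms an $\F$-vector space $V_d$, spanned by the monomials $x_1^{a_1}\cdots x_n^{a_n}$ with $a_1 + \cdots + a_n = d$. The number of such monomials is the number of weak compositions of $d$ into $n$ parts, which is $\binom{n + d - 1}{d}$, so $\dim_\F V_d = \binom{n+d-1}{d}$.

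Next I would introduce the evaluation map. For each point $x \in D$, evaluation $f \mapsto f(x)$ is a linear functional on $V_d$, so the map $\mathrm{ev}_D : V_d \to \F^D$ sending $f$ to $(f(x))_{x \in D}$ is $\F$-linear. Its kernel is exactly the set of homogeneous degree-$d$ polynomials vanishing on all of $D$. Since the target $\F^D$ has dimension $|D|$, the rank-nullity theorem gives $\dim_\F \ker(\mathrm{ev}_D) \geq \dim_\F V_d - |D| = \binom{n+d-1}{d} - |D|$. By hypothesis $|D| < \binom{n+d-1}{d}$, so this lower bound is at least $1$, hence $\ker(\mathrm{ev}_D)$ contains a nonzero element $f$, and by construction $f$ is homogeneous of degree $d$ with $D \subseteq Z(f)$.

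One small technical point worth flagging: the statement asks for a polynomial "of degree $d$," and a priori the nonzero $f$ produced above is homogeneous and lies in $V_d$, so it is a homogeneous polynomial all of whose monomials have total degree exactly $d$; since $f \neq 0$, its degree is indeed $d$. (If one only wanted degree at most $d$ the same argument works, but homogeneity of the output is automatic from working inside $V_d$.) I do not expect any genuine obstacle here — the only thing to be careful about is the monomial count $\dim V_d = \binom{n+d-1}{d}$, which is a standard stars-and-bars computation, and the fact that distinct monomials are linearly independent over $\F$ as formal polynomials (so that $\dim V_d$ really equals the monomial count even over a finite field, before we pass to the evaluation map). The whole proof is then two or three lines: compute $\dim V_d$, apply rank-nullity to $\mathrm{ev}_D$, and invoke the hypothesis on $|D|$ to conclude the kernel is nontrivial.
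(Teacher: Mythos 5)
Your argument is essentially identical to the paper's: both set up the space $V_d$ of homogeneous degree-$d$ polynomials (the paper writes $\dim V_d = \binom{n+d-1}{n-1}$, which equals your $\binom{n+d-1}{d}$), consider the evaluation map into $\F^D$, and apply rank-nullity to conclude the kernel is nontrivial. Correct and the same proof.
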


\begin{proof}
Let~$V$ be the vector space of homogeneous degree-$d$ polynomials in~$\F[x_1,\dots,x_n]$.
Note that
 $\dim(V) = {n+d - 1\choose n-1}$.
 Let~$W = \F^D$.
Let $L:V\to W$ be the linear map given by $L(f) = (f(x))_{x\in D}$.
Since $\dim(W) < \dim(V)$, it follows from the Rank Nullity Theorem that $\dim(\ker(L)) \geq 1$.
Hence, there exists a nonzero $f\in V$ such that $f(x) = 0$ for all $x\in D$.
\end{proof}

We also use the following standard result bounding the zero-set of a polynomial in terms of its degree; the specific form quoted below is from~~\cite[Lemma~2.2]{CohenTal:2014}.

\begin{lemma}[DeMillo--Lipton--Schwartz--Zippel]\label{lem:DMLSZ}
Let~$\F$ be a finite field with~$q$ elements
and
let~$f\in\F[x_1,\dots,x_n]$ be a nonzero polynomial of degree~$d$.
Then,
\beqn
|Z(f)| \leq \Big(1 - \frac{1}{q^{d/(q-1)}}\Big)q^n.
\eeqn
\end{lemma}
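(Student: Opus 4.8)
The plan is to recast the bound as a lower bound on the number of \emph{non}-zeros of $f$ and prove it by induction on $n$, after a standard normalization. I would write $N(f)=q^n-|Z(f)|$ and note that, since $(1-q^{-d/(q-1)})q^n$ is non-decreasing in $d$ and since reducing $f$ modulo the relations $x_i^q=x_i$ ($i\in[n]$) neither changes the value of $f$ at any point of $\F^n$ nor increases its total degree (each substitution $x_i^q\mapsto x_i$ strictly lowers the degree), one may assume $f$ has degree at most $q-1$ in each variable, and it then suffices to prove $N(f)\geq q^{\,n-d/(q-1)}$ with $d=\deg f$. The only quantitative ingredient will be the elementary inequality
\[
q-e \;\geq\; q^{\,1-e/(q-1)} \qquad\text{for every integer } e \text{ with } 0\leq e\leq q-1,
\]
which holds because $t\mapsto q^{\,1-t/(q-1)}=q\cdot\bigl(q^{-1/(q-1)}\bigr)^{t}$ is a convex function of $t$ agreeing with the affine map $t\mapsto q-t$ at $t=0$ and $t=q-1$, hence lying below it on $[0,q-1]$.

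The induction then runs as follows. If $d=0$, then $f$ is a nonzero constant and $N(f)=q^n=q^{\,n-d/(q-1)}$. If $n=1$ and $d\geq 1$, then $f\in\F[x_1]$ has at most $d$ roots, so $N(f)\geq q-d\geq q^{\,1-d/(q-1)}$ by the inequality above. If $n\geq 2$ and $d\geq 1$, I would write $f=\sum_{i=0}^{e}f_i(x_1,\dots,x_{n-1})\,x_n^{\,i}$ with $e=\deg_{x_n}f\leq q-1$ and $f_e\neq 0$; comparing total degrees gives $\deg f_e\leq d-e$. Applying the inductive hypothesis to $f_e$ (nonzero, in $n-1$ variables, individual degrees $\leq q-1$) and using $\deg f_e\leq d-e$, the set $U:=\{a\in\F^{n-1}\st f_e(a)\neq 0\}$ satisfies $|U|\geq q^{\,(n-1)-(d-e)/(q-1)}$. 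For each $a\in U$ the univariate polynomial $f(a,x_n)$ has leading coefficient $f_e(a)\neq 0$, hence degree exactly $e\leq q-1$, and so at least $q-e$ non-zeros in $\F$. Discarding the contribution of $a\notin U$,
\begin{align*}
N(f) &\;\geq\; \sum_{a\in U}(q-e) \;=\; (q-e)\,|U| \;\geq\; (q-e)\,q^{\,(n-1)-(d-e)/(q-1)}\\
&\;\geq\; q^{\,1-e/(q-1)}\,q^{\,(n-1)-(d-e)/(q-1)} \;=\; q^{\,n-d/(q-1)},
\end{align*}
again by the inequality above. This closes the induction, and $|Z(f)|=q^n-N(f)\leq (1-q^{-d/(q-1)})q^n$.

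The one place that genuinely needs care is the reduction to individual degrees at most $q-1$: it is exactly what forces $e\leq q-1$ in the inductive step, so that the univariate estimate ``$f(a,x_n)$ has at least $q-e$ non-zeros'' is not vacuous; it also makes explicit the hypothesis tacit in the statement, that $f$ does not represent the zero function (otherwise $|Z(f)|=q^n$ and the bound fails). Everything else is routine double counting, whose sole quantitative input is the convexity inequality $q-e\geq q^{\,1-e/(q-1)}$ — precisely the refinement that improves on the naive Schwartz--Zippel count $|Z(f)|\leq d\,q^{n-1}$ once $d\geq q$. In the uses made of this lemma in the present paper one always has $d\leq p-1<q$, so $f$ is already normalized and this reduction is vacuous.
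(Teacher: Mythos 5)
Your proposal is correct and follows essentially the same route as the paper's proof: rewrite the claim as a lower bound $N(f)\geq q^{n-d/(q-1)}$ on the number of non-zeros, normalize so that every variable has individual degree at most $q-1$, induct on $n$ by decomposing $f$ along one variable and passing to the top-degree coefficient $f_e$, and combine the inductive bound for $f_e$ with the univariate count $q-e$ via the convexity inequality $q-e\geq q^{1-e/(q-1)}$. The only cosmetic differences are that you isolate the $d=0$ base case separately and spell out the convexity inequality explicitly, whereas the paper folds these into its $n=1$ base case.
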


\begin{proof}
For $g\in\F[x_1,\dots,x_m]$ write $\overline{Z(g)} = \F^m\smallsetminus Z(g)$.
Using induction on~$n$ we shall prove that $|\overline{Z(f)}| \geq q^{n}/q^{d/(q-1)}$, which establishes the result.
First observe that, by Lagrange's Theorem, we may assume that each variable in~$f$ has degree at most~$q - 1$.
The base case $n = 1$ follows from the Factor Theorem, since then $|\overline{Z(f)}| \geq q - d \geq q(1 - d/q) \geq q/q^{d/(q-1)}$.

Assume the result holds for $(n - 1)$-variable polynomials.
We can decompose~$f$ as
\beq\label{eq:fdecomp}
f(t,y_1,\dots,y_{n-1}) = \sum_{i=1}^{\min\{d, q-1\}}t^i g_i(y_1,\dots,y_{n-1}),
\eeq
where $g_i\in\F[y_1,\dots,y_{n-1}]$ has degree at most~$d - i$.
Let~$k$ be the maximum~$i$ for which~$g_i$ is nonzero.
By the induction hypothesis, the polynomial~$g_k$  satisfies 
$
|\overline{Z(g_k)}| \geq q^{n-1}/q^{(d-k)/(q-1)}$.

For each~$y\in \overline{Z(g_k)}$ let $h_y\in \F[t]$ be the univariate polynomial defined by $h_y(t) = f(t, y_1,\dots,y_{n-1})$.
The decomposition~\eqref{eq:fdecomp} shows that each~$h_y$ is nonzero and has degree~$k$, and thus $\overline{Z(h_y)} \geq q/q^{k/(q-1)}$.
We conclude that
\beqn
|\overline{Z(f)}|
\geq
\sum_{y \in \overline{Z(g_k)}}
|\overline{Z(h_y)}|
\geq
q^{n}/q^{d/(q-1)}.
\eeqn
\end{proof}

Finally, we use the Chevalley--Warning Theorem to lower bound the number of common zeros of a system of polynomials~\cite[Chapter~6]{Lidl:1983}.

\begin{theorem}[Chevalley--Warning]\label{thm:CW2}
Let $\F$ be a finite field and let $f_1,\dots,f_k\in\F[x_1,\dots,x_n]$ be nonzero polynomials such that
$
d = \deg(f_1) + \cdots + \deg(f_k) < n$.
If there is at least one solution to the system $f_1(x) = \cdots = f_k(x) = 0$ in $\F^n$, then there are at least~$|\F|^{n-d}$ solutions.
\end{theorem}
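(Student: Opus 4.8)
The plan is to encode the entire system as a single polynomial whose non-vanishing set is exactly the solution set
$N = \{x\in\F^n \st f_1(x) = \cdots = f_k(x) = 0\}$, to bound the degree of that polynomial in terms of $d$, and then to read off the desired lower bound on $|N|$ directly from Lemma~\ref{lem:DMLSZ}. Throughout write $q = |\F|$.

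First I would introduce the polynomial
\beqn
P(x) \;=\; \prod_{i=1}^{k}\big(1 - f_i(x)^{q-1}\big) \;\in\; \F[x_1,\dots,x_n].
\eeqn
Since $a^{q-1} = 1$ for every $a\in\F\smallsetminus\{0\}$ while $0^{q-1} = 0$, a direct check shows that $P(x) = 1$ for $x\in N$ and $P(x) = 0$ for $x\in\F^n\smallsetminus N$; that is, $P$ is the indicator function of $N$, and in particular $\F^n\smallsetminus Z(P) = N$. Each $f_i$ is a nonzero polynomial, and the assumption that $N$ is nonempty forces every $f_i$ to be non-constant, so $\deg P \le (q-1)(\deg f_1 + \cdots + \deg f_k) = (q-1)d$; moreover $P$ is a nonzero polynomial because it takes the value $1$ on $N$. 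Now I would apply Lemma~\ref{lem:DMLSZ} to $P$: as $\deg P \le (q-1)d$, it gives $|Z(P)| \le (1 - q^{-d})q^n$, and hence
\beqn
|N| \;=\; |\F^n\smallsetminus Z(P)| \;=\; q^n - |Z(P)| \;\ge\; q^{n-d},
\eeqn
which is the assertion. (The hypothesis $d<n$ is not used beyond guaranteeing $q^{n-d}\ge q\ge 2$, so that the bound is not vacuous.)

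I do not expect a real obstacle here: the single step requiring any thought is the passage from $k$ separate equations to the one polynomial $P$, together with the observation that its total degree is only $(q-1)d$. It is worth noting why a cheaper route does not reach the stated bound. The classical one-line argument --- that $\sum_{x\in\F^n}P(x) = 0$ in $\F$, because $\deg P \le (q-1)d < (q-1)n$ forces each monomial of $P$ to have some variable of exponent at most $q-2$, and $\sum_{t\in\F}t^a = 0$ for $0\le a\le q-2$ --- yields only $|N|\equiv 0\pmod p$, hence $|N|\ge p$. Upgrading this congruence to the sharp count $|N|\ge q^{n-d}$ is precisely what Lemma~\ref{lem:DMLSZ} supplies, so that lemma is the one nontrivial ingredient the proof genuinely needs.
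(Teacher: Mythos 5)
Your proof is correct and follows essentially the same route as the paper's: define $P = \prod_{i=1}^k (1 - f_i^{q-1})$, observe it is the indicator of the solution set with $\deg P \le (q-1)d$, and apply Lemma~\ref{lem:DMLSZ} to get $|Z(P)| \le (1 - q^{-d})q^n$, hence $|N| \ge q^{n-d}$. The only additions are minor commentary (non-constancy of the $f_i$, the remark about the classical divisibility argument) that do not change the substance.
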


We include a quick proof we learned from Dion Gijswijt, which is based on Lemma~\ref{lem:DMLSZ}.

\begin{proof}
Define the polynomial $f\in \F[x_1,\dots,x_n]$ by
$f = (1 - f_1^{q-1})\cdots (1 - f_k^{q-1})$.
Observe that $\deg(f) \leq (q-1)d$ and that $f(x) =1$ if $x\in Z(f_1,\dots,f_k)$ and $f(x) = 0$ otherwise.
By Lemma~\ref{lem:DMLSZ}, 
$
|Z(f)| \leq (1 - 1/q^{d})q^n
$.
Hence, $|Z(f_1,\dots,f_k)| \geq q^{n - d}$.
\end{proof}

With this, we are set up to prove Theorem~\ref{thm:densecap}.

\begin{proof}[ of Theorem~\ref{thm:densecap}]
By Lemma~\ref{lem:hom-interpolation}, there exists a nonzero degree-$(p-1)$ homogeneous polynomial $f\in\F[x_1,\dots, x_n]$ such that $D\subseteq Z(f)$.
Set $T_1 = Z(f)$.
By Lemma~\ref{lem:DMLSZ}, there exists an~$a\in \F^*$ such that the set~$S = \{x\in\F^n\st f(x) = a\}$ is nonempty.
For each $\lambda\in[p-1]$ set $T_{\lambda + 1} = S$.
It then follows from Lemma~\ref{lem:zero-one} that the combinatorial rectangle $R = T_1 \times\cdots\times T_{p}$ contains no lines with direction in~$D$.

We show that~$R$ contains many lines.
To this end, define degree-$(p-1)$ polynomials $g_0,\dots, g_{p-1}\in \F[x_1,\dots, x_n,y_1,\dots,y_n]$ by setting $g_0(x,y) = f(x)$ and $g_{\lambda} = f(x + \lambda y) - a$ for each $\lambda \in [p-1]$.
Then, a solution in~$\F^{2n}$ to the set of equations $g_0(x,y) = 0,\dots, g_{p-1}(x,y) = 0$ is a line through~$R$.
There is at least one such solution.
Indeed, if we let $x = 0$ and $y\in S$, then
since~$f$ is homogenous of degree~$p-1$, we have $g_0(0,y) = f(0) = 0$ and $g_\lambda(0,y) = f(\lambda y) -a = a(\lambda^{p-1} - 1) = 0$ by Fermat's Little Theorem.
By Theorem~\ref{thm:CW2}, the above system has at least $p^{2n +p - p^2}$ solutions in~$\F^{2n}$ and~$R$ has at least that many lines.
\end{proof}

\section{Proof of Theorem~\ref{thm:tensor-hoeffding}}

In this section we prove Theorem~\ref{thm:tensor-hoeffding}.
Throughout this section, let $(\epsilon_i)_{i\in\N}$ be independent uniformly distributed $\pmset{}$-valued random variables and let~$\epsilon = (\epsilon_1,\dots,\epsilon_k)$.

\subsection{Reduction to Bernoulli processes}

The main new ingredient needed for the proof of Theorem~\ref{thm:tensor-hoeffding} is a bound showing that for fixed $A_1,\dots, A_k\in \Pi_n^{(t)}$, the expected norm of the Rademacher sum $\epsilon_1A_1 + \cdots + \epsilon_kA_k$ is at most a constant times $\sqrt{k}\,\sigma_{p,t}(n)$.
From this, we derive the result using standard techniques based on combining a symmetrization trick, the Kahane--Khintchine inequality and an exponential Markov inequality.
The details follow below.
Recall that a real-valued random variable is \emph{centered} if it has expectation zero.

The following standard symmetrization lemma allows us to bound the moments of the random variable whose tail we aim to bound in~\eqref{eq:tensor-hoeffding} in terms of the moments of the norm of a Rademacher sum of \emph{fixed} plane sub-stochastic forms.

\begin{lemma}[Symmetrization]\label{lem:symmetrization}
Let~$X$ be a real finite-dimensional normed vector space and let~$Y_1,\dots,Y_k\subseteq X$ be subsets.
For $p\geq 1$ let $\sigma_p\in [0,\infty)$ be the smallest number such that for any fixed $A_1\in Y_1,\dots,A_k\in Y_k$, we have
\beqn
\Big(\Exp\Big[\Big\|\sum_{i=1}^k \epsilon_iA_i\Big\|^p\Big]\Big)^{1/p}
\leq\sigma_p.
\eeqn
Then, if $A_1,\dots,A_k$ are independent random variables over~$Y_1,\dots,Y_k$, respectively, 
\beq\label{eq:symm}
\Big(\Exp\Big[\Big\| \sum_{i=1}^k (A_i - \Exp[A_i]) \Big\|^p\Big]\Big)^{1/p}
\leq
4\sigma_p.
\eeq
\end{lemma}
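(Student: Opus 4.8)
The plan is to prove the standard symmetrization inequality. First I would introduce an independent copy $A_1',\dots,A_k'$ of the random tuple $A_1,\dots,A_k$, defined on a product probability space, so that $A_i'$ is independent of everything and has the same law as $A_i$. Since $\Exp[A_i] = \Exp[A_i' \mid A_1,\dots,A_k]$ (conditioning on the first block), Jensen's inequality applied to the convex function $\|\cdot\|^p$ gives
\beqn
\Exp\Big[\Big\|\sum_{i=1}^k (A_i - \Exp[A_i])\Big\|^p\Big]
\leq
\Exp\Big[\Big\|\sum_{i=1}^k (A_i - A_i')\Big\|^p\Big].
\eeqn
Next I would observe that each difference $A_i - A_i'$ is a symmetric random variable (its law is invariant under negation), and these are jointly independent across $i\in[k]$. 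Hence for any fixed sign pattern $(\eta_1,\dots,\eta_k)\in\pmset{}$ the tuple $(\eta_i(A_i - A_i'))_i$ has the same joint law as $(A_i - A_i')_i$; averaging over an independent Rademacher vector $\epsilon = (\epsilon_1,\dots,\epsilon_k)$ therefore does not change the expectation, so the right-hand side equals $\Exp_{\epsilon}\Exp_{A,A'}\big[\big\|\sum_i \epsilon_i(A_i - A_i')\big\|^p\big]$.

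Then I would apply the triangle inequality inside the norm, $\big\|\sum_i \epsilon_i(A_i - A_i')\big\| \leq \big\|\sum_i \epsilon_i A_i\big\| + \big\|\sum_i \epsilon_i A_i'\big\|$, followed by the inequality $(a+b)^p \leq 2^{p-1}(a^p + b^p)$ (or just Minkowski in $L^p$), and use that $A$ and $A'$ have the same distribution to collapse the two terms. This yields
\beqn
\Big(\Exp\Big[\Big\|\sum_{i=1}^k (A_i - \Exp[A_i])\Big\|^p\Big]\Big)^{1/p}
\leq 2\Big(\Exp\Big[\Big\|\sum_{i=1}^k \epsilon_i A_i\Big\|^p\Big]\Big)^{1/p}.
\eeqn
Finally, since each $A_i$ ranges over $Y_i$, conditioning on the values of $A_1,\dots,A_k$ and invoking the definition of $\sigma_p$ as the supremum over fixed choices $A_i\in Y_i$ bounds the inner Rademacher moment by $\sigma_p$, giving a factor of $2$ rather than $4$. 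The constant $4$ in the statement is generous and leaves room for, e.g., using the cruder bound $\Exp\|\sum_i\epsilon_i A_i\|^p \le 2^p\sigma_p^p$ at an intermediate step, or absorbing the $2^{(p-1)/p}$ loss from the convexity inequality; in any case $2\le 4$ suffices. There is no real obstacle here — the only point requiring a little care is making sure the supremum defining $\sigma_p$ is taken after conditioning, i.e. that $\sigma_p$ bounds $\Exp_\epsilon\|\sum_i \epsilon_i A_i\|^p$ for \emph{every} deterministic choice $A_i\in Y_i$, which is exactly how it is defined, so the conditional expectation is pointwise at most $\sigma_p^p$ and we may take expectation over $(A_i)_i$.
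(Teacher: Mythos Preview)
Your proof is correct and follows the same standard symmetrization route as the paper: introduce an independent copy, use Jensen to pass from $A_i-\Exp[A_i]$ to $A_i-A_i'$, exploit the symmetry of the differences to insert Rademacher signs, then split with the triangle inequality and condition on the $A_i$'s to invoke~$\sigma_p$. In fact your argument is slightly more streamlined than the paper's---the paper works with the auxiliary variables $B_i = A_i-\Exp[A_i']$ and applies Jensen/triangle twice, picking up the constant~$4$, whereas you reach the Rademacher sum in $A_i$ directly and obtain~$2$.
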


\begin{proof}
For each~$i\in[k]$ let $A_i'$ be an independent copy of~$A_i$. 
Let $B_i = A_i - \Exp[A_i']$, let~$\widetilde B_i$ be an independent copy of~$B_i$ and note that these random variables are centered.
By Jensen's inequality and the triangle inequality, the $p$th power of the left-hand side of~\eqref{eq:symm} is at most
\begin{align*}
\Exp\Big[\Big\|\sum_{i=1}^kB_i\Big\|^p\Big]
=
\Exp\Big[\Big\|\sum_{i=1}^kB_i - \Exp\Big[\sum_{j=1}^k\widetilde B_j\Big]\Big\|^p\Big]
\leq
\Exp\Big[\Big\|\sum_{i=1}^k(B_i - \widetilde B_i)\Big\|^p\Big].
\end{align*}
Since the random variables~$B_i - \widetilde B_i$ are independent and symmetrically distributed, that is, $B_i - \widetilde B_i$ has the same distribution as~$\widetilde B_i - B_i$, it follows that their sum has the same distribution as $\delta_1(B_1 - \widetilde B_1) + \cdots + \delta_k(B_k - \widetilde B_k)$ for any choice of signs $\delta_i\in \pmset{}$.
Hence, by Jensen's inequality, the above is at most
\begin{align*}
\Exp_{\epsilon}\Big[\Exp_{B_i,\widetilde B_i}\Big[\Big\|\sum_{i=1}^k\epsilon_i(B_i - \widetilde B_i)\Big\|^p\Big]\Big]
&\leq
2^p\Exp_{\epsilon}\Big[\Exp_{B_i}\Big[\Big\|\sum_{i=1}^k\epsilon_iB_i\Big\|^p\Big]\Big]\\
&=
2^p\Exp_{\epsilon}\Big[\Exp_{A_i,A_i'}\Big[\Big\|\sum_{i=1}^k\epsilon_i(A_i - \Exp[A_i'])\Big\|^p\Big]\Big].
\end{align*}
Independence of~$A_i$ and~$A_i'$ for each~$i\in[k]$, another application of Jensen's inequality and the triangle inequality imply that the above is at most
\beqn
4^p\Exp_{\epsilon}\Big[\Exp_{A_i}\Big[\Big\|\sum_{i=1}^k\epsilon_i A_i\Big\|^p\Big]\Big]
=
4^p\Exp_{A_i}\Big[\Exp_{\epsilon}\Big[\Big\|\sum_{i=1}^k\epsilon_i A_i\Big\|^p\Big]\Big].
\eeqn
The result now follows by applying the definition of~$\sigma_p$ to the inner expectation above.
\end{proof}

Next, the Kahane--Khintchine inequality reduces the problem of bounding the  numbers~$\sigma_p$ from Lemma~\ref{lem:symmetrization} to bounding~$\sigma_1$ only (see for example~\cite[Theorem 4.7]{Ledoux:1991}).

\begin{theorem}[Kahane--Khintchine inequality]\label{thm:Kahane-Khintchine}
Let $X$ be a Banach space and ${A_1,\dots,A_k\in X}$.
Then, for any integer~$p\geq 1$ and some absolute constant $C$, we have
\beq\label{eq:KK}
\Big(\Exp\Big[\Big\|\sum_{i=1}^k \epsilon_iA_i\Big\|^p\Big]\Big)^{1/p}
\leq
\sqrt{Cp}\,
\Exp\Big[\Big\|\sum_{i=1}^k \epsilon_iA_i\Big\|\Big].
\eeq
\end{theorem}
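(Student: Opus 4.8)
The plan is to derive the inequality from a sub-Gaussian concentration bound for the scalar random variable $S:=\big\|\sum_{i=1}^{k}\epsilon_iA_i\big\|$ about its mean $\mu:=\Exp[S]$, in which the variance proxy is comparable to $\mu^2$, followed by the routine passage from tail estimates to moment estimates. (If every $A_i$ vanishes there is nothing to prove, so assume not.)

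First I would record two structural facts about $S$. Define $F\colon\R^k\to\R$ by $F(x)=\big\|\sum_{i=1}^{k}x_iA_i\big\|$. As the composition of a linear map with a norm, $F$ is convex, and $|F(x)-F(y)|\le\big\|\sum_i(x_i-y_i)A_i\big\|\le\sigma\,\|x-y\|_{\ell_2}$, where $\sigma:=\sup\big\{\big\|\sum_i\theta_iA_i\big\| \st \sum_i\theta_i^2\le1\big\}$; by Hahn--Banach duality, $\sigma=\sup_{\phi\in B_{X^*}}\big(\sum_i\phi(A_i)^2\big)^{1/2}$. Next I would bound $\sigma$ by $\mu$ using the scalar Khintchine inequality in its elementary fourth-moment form: for reals $a_1,\dots,a_k$ and $Z=\sum_ia_i\epsilon_i$, a direct expansion gives $\Exp[Z^4]\le3(\Exp[Z^2])^2$, so by H\"older, $\Exp[Z^2]=\Exp\big[|Z|^{2/3}|Z|^{4/3}\big]\le(\Exp|Z|)^{2/3}(\Exp[Z^4])^{1/3}\le 3^{1/3}(\Exp|Z|)^{2/3}(\Exp[Z^2])^{2/3}$, whence $\big(\sum_ia_i^2\big)^{1/2}=(\Exp[Z^2])^{1/2}\le\sqrt3\,\Exp|Z|$. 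Taking $a_i=\phi(A_i)$ and using $|\phi(\sum_i\epsilon_iA_i)|\le S$ for $\phi\in B_{X^*}$ yields $\big(\sum_i\phi(A_i)^2\big)^{1/2}\le\sqrt3\,\mu$ for every such $\phi$; hence $\sigma\le\sqrt3\,\mu$.

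Then I would apply a concentration inequality for convex Lipschitz functions of independent bounded random variables: there is an absolute constant $c>0$ such that for every convex $L$-Lipschitz $G\colon\R^k\to\R$, $\Pr\big[|G(\epsilon)-\Exp G(\epsilon)|>t\big]\le 2\exp(-ct^2/L^2)$ for all $t>0$. (After the affine rescaling taking $\pmset{k}$ to $[0,1]^k$, this is exactly Talagrand's convex-distance concentration inequality for product measures on a box; it also follows from the entropy method.) Applying this to $F$ with $L=\sigma$ and using $\sigma\le\sqrt3\,\mu$ gives $\Pr[|S-\mu|>t]\le 2\exp\big(-ct^2/(3\mu^2)\big)$. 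Integrating, $\Exp[|S-\mu|^p]=\int_0^\infty pt^{p-1}\Pr[|S-\mu|>t]\,dt$ is $O(1)\cdot\Gamma(\tfrac p2+1)\,(3\mu^2/c)^{p/2}$, so by Stirling $(\Exp|S-\mu|^p)^{1/p}\le C_1\sqrt p\,\mu$ for an absolute constant $C_1$. Finally, the triangle inequality in $L^p$ and $p\ge1$ give $(\Exp[S^p])^{1/p}\le(\Exp|S-\mu|^p)^{1/p}+\mu\le(C_1+1)\sqrt p\,\mu$, which is the asserted bound with $C=(C_1+1)^2$.

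The step I expect to be the main obstacle is the concentration estimate: the rest is elementary, but obtaining the power $\sqrt p$ rather than a worse power of $p$ genuinely requires a Gaussian-type tail whose variance proxy $\sigma^2$ is comparable to $\mu^2$, and producing such a tail rests on the convexity of $x\mapsto\big\|\sum_ix_iA_i\big\|$ via Talagrand's convex-distance inequality, a non-elementary but standard input. One may of course also cite the conclusion directly, as in the theorem's reference, or reprove the convex-Lipschitz concentration bound by the entropy method, where convexity enters through the bound $\sum_i\big(F(\epsilon)-F(\epsilon^{(i)})\big)_+^2\le4\,\|\nabla F(\epsilon)\|_{\ell_2}^2\le4\sigma^2$ on the discrete gradient, $\epsilon^{(i)}$ denoting $\epsilon$ with its $i$th coordinate flipped.
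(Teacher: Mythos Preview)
The paper does not prove this theorem at all: it simply states the Kahane--Khintchine inequality and cites~\cite[Theorem~4.7]{Ledoux:1991} for a proof. So there is no ``paper's own proof'' to compare against.

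Your argument is correct and is one of the standard modern routes to the inequality with the sharp $\sqrt{p}$ growth. The two key ideas you isolate are exactly right: first, the weak-parameter bound $\sigma=\sup_{\phi\in B_{X^*}}(\sum_i\phi(A_i)^2)^{1/2}\le\sqrt3\,\mu$, which is nothing but the scalar Khintchine inequality applied to each functional (your fourth-moment/H\"older derivation of the constant $\sqrt3$ is clean and correct); second, sub-Gaussian concentration of the convex $\sigma$-Lipschitz function $F(x)=\|\sum_i x_iA_i\|$ on the discrete cube, for which Talagrand's convex-distance inequality (or the entropy method, as you note) is the right black box. The passage from the tail bound to the $L^p$ bound is routine.

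Compared to the reference the paper cites, your approach is somewhat different in spirit: the Ledoux--Talagrand treatment obtains the result via an iteration/hypercontractivity argument directly at the level of moments, whereas you go through concentration and then integrate tails. Your route has the advantage of making transparent \emph{why} the variance proxy is comparable to the mean---it is precisely the weak/strong moment comparison $\sigma\lesssim\mu$---and of separating the Banach-space content (which is only in that comparison) from the purely scalar concentration step. The only non-elementary input is the convex-Lipschitz concentration bound, which you correctly flag and for which any of the standard references suffices.
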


Lemma~\ref{lem:symmetrization} and Theorem~\ref{thm:Kahane-Khintchine} thus show that the moments on the left-hand side of~\eqref{eq:symm} can be bounded in terms of the average on the right-hand side of~\eqref{eq:KK}.
The following upper bound and a standard exponential Markov argument will now allow us to prove Theorem~\ref{thm:tensor-hoeffding}.

%
%
%
%
%

%

\begin{theorem}\label{thm:process}
For every integer~$t\geq 3$ there exists an absolute constant $C(t) \in (0,\infty)$ such that the following holds.
Let~${A_1,\dots,A_k\in \Pi_n^{(t)}}$.
Then, for any $p\geq 1$ and $\sigma_{p,t}(n)$ as in Theorem~\ref{thm:tensor-hoeffding}, 
\beq\label{eq:process-val}
\Exp\Big[
\Big\|\sum_{i=1}^k\epsilon_iA_i\Big\|_\pnorm
\Big]
\leq
C(t)\sqrt{k}\, \sigma_{p,t}(n)
\eeq
\end{theorem}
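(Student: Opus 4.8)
The goal is to bound the expected $\pnorm$-norm of a Rademacher sum $\sum_{i=1}^k \epsilon_i A_i$ of fixed forms $A_i \in \Pi_n^{(t)}$. Since this is the supremum of the Bernoulli process $x \mapsto \sum_i \epsilon_i A_i(x[1],\dots,x[t])$ over the product of $\ell_p$-unit balls, the natural tool is Dudley's entropy integral: the expected supremum of a subgaussian process indexed by a metric space $(S,\rho)$ is controlled by $\int_0^\infty \sqrt{\log N(S,\rho,u)}\, du$, where $N(S,\rho,u)$ is the covering number at scale $u$ and $\rho$ is the subgaussian (i.e. $\ell_2$-in-the-coefficients) metric. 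So the plan is: (i) fix the index set to be $S = B_{\ell_p}^n \times \cdots \times B_{\ell_p}^n$ ($t$ copies); (ii) identify the subgaussian metric induced by the process, which is $\rho(x,x') = \big(\sum_i |A_i(x[1],\dots,x[t]) - A_i(x'[1],\dots,x'[t])|^2\big)^{1/2} \le \sqrt{k}\,\|x - x'\|_*$ for an appropriate product norm $*$ coming from the plane-substochasticity constraints; (iii) bound the covering numbers $N(S,\rho,u)$; (iv) feed everything into Dudley's integral and do the calculus to extract $\sigma_{p,t}(n)$.

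**Key steps.** First I would estimate the "diameter" of the process: for $A \in \Pi_n^{(t)}$ and $x[j]$ in the $\ell_p$-ball, bound $|A(x[1],\dots,x[t])|$ using multilinearity and the row/column (plane) sum constraints (\ref{eq:stoch}); this is a Hölder-type interpolation between the $\ell_1$ bound (where each $\|x[j]\|_{\ell_1}$ costs a factor $n^{1-1/p}$) and the $\ell_\infty$/plane-sum bound. This gives the scale at which the Dudley integral is truncated and already explains the two regimes in $\max\{1, n^{1 - 1/(2t) - (t-1)/p}\}$. Second, and this is the heart of the argument, I need covering numbers of the product of $\ell_p$-balls in the metric $\rho$. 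Rather than use a volumetric bound directly, I would construct $\epsilon$-nets probabilistically (as the abstract advertises): approximate each unit vector $x[j] \in B_{\ell_p}^n$ by a sparse, appropriately quantized vector — e.g. an average of a few random coordinate vectors, à la Maurey's empirical method — so that the number of net points is quasi-polynomial in $n$ and $1/u$, yielding $\log N(S,\rho, u) \lesssim (\text{poly in } t) \cdot \log n \cdot (\text{log terms in } 1/u)$ at the relevant scales, with a worse polynomial-in-$n/u$ bound at small scales handled by a crude volume argument. The $t$ copies multiply the entropy by a factor $t$, and it is the chaining over all $t$ coordinates simultaneously that produces the $(\log n)^{t+1/2}$ factor: one $(\log n)^{1/2}$ from the square root in Dudley's integral, and $t$ more powers from iterating the net construction across the $t$ tensor slots (or from a union bound over the "which slot moved" in a telescoping decomposition $x - x' = \sum_j (\dots)$). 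Third, I would assemble: $\Exp\|\sum_i \epsilon_i A_i\|_\pnorm \lesssim \int_0^{D} \sqrt{\log N(S,\rho,u)}\,du \lesssim \sqrt{k} \cdot D \cdot (\log n)^{t+1/2}$ up to a $t$-dependent constant, where $D$ is the diameter from step one, and checking $D \asymp n^{1/2 - 1/p}\max\{1, n^{1-1/(2t)-(t-1)/p}\}$ finishes the proof.

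**Main obstacle.** The hard part will be getting the covering number bound with only a \emph{polylogarithmic} (in $n$) exponent on $\log N$, and in particular pinning down the exact exponent $t + \tfrac12$ on $\log n$. A naive volumetric covering of a single $\ell_p$-ball in $\R^n$ gives $\log N \sim n \log(1/u)$, which is far too large; one must exploit that the process only "sees" $x$ through the substochastic forms $A_i$, effectively reducing the relevant dimension, and this is exactly where a careful probabilistic net construction (sparsification/Maurey-type argument combined with the plane-sum constraints) is essential. Controlling how the approximation error propagates multilinearly across all $t$ slots — each substitution $x[j] \rightsquigarrow$ (net point) incurs an error amplified by the $\ell_p$-norms of the remaining $t-1$ slots — without losing polynomial factors in $n$ is the delicate bookkeeping that the proof must handle, and it is what forces the $(\log n)^{t}$ blow-up and the split into the two regimes of $\sigma_{p,t}(n)$.
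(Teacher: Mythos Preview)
Your high-level strategy---Dudley's integral plus a Maurey-type probabilistic net exploiting plane sub-stochasticity---is exactly the toolkit the paper uses. But the paper does \emph{not} run Dudley on the full product $B_{\ell_p}^n\times\cdots\times B_{\ell_p}^n$ as you propose. Instead it first performs a dyadic decomposition (Lemma~\ref{lem:dyadic}): each ball $B_{\ell_p}^n$ is sliced into $R=\lceil\log n\rceil$ shells $\mathcal S_p(r)$, and on each shell the vectors have at most $2^r$ nonzero entries of magnitude $\asymp 2^{-r/p}$. This reduces the problem to bounding, for each ${\bf r}\in[R]^t$, the expected maximum over products $\Cball_{2^{r_1}}^n\times\cdots\times\Cball_{2^{r_t}}^n$ of $\{-1,0,1\}$-vectors with prescribed support sizes, and \emph{only then} is Dudley $+$ Maurey applied (Lemma~\ref{lem:smallprocess} via Lemma~\ref{lem:maurey}) to each such product.

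This matters for two of your claims. First, the factor $(\log n)^{t}$ does not come from ``iterating the net construction across the $t$ slots'' or a telescoping union bound: it comes from the trivial fact that there are $(\log n)^t$ choices of ${\bf r}$ in the dyadic sum, and the paper simply bounds each summand by the same quantity. The remaining $(\log n)^{1/2}$ is the single $\sqrt{\log n}$ that Dudley's integral produces for each fixed ${\bf r}$. Second, the two regimes in $\sigma_{p,t}(n)$ are not read off from a global diameter bound; they arise when one maximizes the ratio $\sqrt{\max_s 2^{r_s}}\,\min_s(2^{r_s})^{1-1/(2t)}\big/2^{(r_1+\cdots+r_t)/p}$ over the dyadic scales (see the end of the proof of Theorem~\ref{thm:process}).

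Your direct approach faces a genuine obstacle you have not addressed: to cover the full $\ell_p$-ball product in the process metric with log-covering number polylogarithmic in $n$, you need to sparsify vectors in $B_{\ell_p}^n$, but Maurey's empirical method in its basic form applies to $\ell_1$-type constraints. The dyadic decomposition is precisely what converts the $\ell_p$-ball problem into a family of sparse-support problems where Maurey works cleanly; without it, your sketch does not explain how to get the required entropy bound.
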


\begin{proof}[ of Theorem~\ref{thm:tensor-hoeffding}]
Define the random variable
\beqn
Z = \Big\| \frac{1}{k}\sum_{i=1}^k (A_i - \Exp[A_i]) \Big\|_\pnorm.
\eeqn
Let $\alpha > 0$ be a parameter to be set later.
Then, by Markov's inequality,
\begin{align}
\Pr[Z > \eps]
=
\Pr\big[e^{\alpha Z^2} > e^{\alpha \eps^2}\big]
\leq
e^{-\alpha \eps^2}\Exp\big[e^{\alpha Z^2}\big].\label{eq:markov}
\end{align}

Lemma~\ref{lem:symmetrization}, Theorem~\ref{thm:Kahane-Khintchine} and Theorem~\ref{thm:process} imply that for every integer~$p\geq 1$, we have
\begin{align*}
\big(\Exp[Z^p]\big)^{1/p}
&\leq
4C\sqrt{p}\,
C(t)\frac{\sigma_{p,t}(n)}{\sqrt k}.
\end{align*}
Let $\sigma = CC(t)\sigma_{p,t}(n)\sqrt{16/k}$, so that the above equals $\sqrt{p}\, \sigma$.
It follows that
\begin{align*}
\Exp\big[e^{\alpha Z^2}\big]
&=
1 + \sum_{p=1}^\infty \frac{\alpha^p\Exp[Z^{2p}]}{p!}\\
&\leq
1 + \sum_{p=1}^\infty \frac{\alpha^p(2p)^p\sigma^{2p}}{p!}\\
&\leq
1 + \sum_{p=1}^{\infty}\Big(\frac{2 \alpha \sigma^2}{e}\Big)^p,
\end{align*}
where in the last line we used that $p! \geq (p/e)^p$.
Set $\alpha = e/(4\sigma^2)$. Then, the above geometric series equals~2 and the right-hand side of~\eqref{eq:markov} is at most $2e^{-\alpha\eps^2}$, giving the result.
\end{proof}

The remainder of this section is devoted to the proof of Theorem~\ref{thm:process}.

\subsection{Dyadic decomposition}

The first step towards proving Theorem~\ref{thm:process} is to break the problem up into more manageable pieces using the following lemma.
For every~$d\in[n]$ define the set
\beq\label{eq:Hball}
\Cball_d^n = \big\{x\in \{-1,0,1\}^n \st \|x\|_{\ell_0} = \min\{d,n\}\big\}.
\eeq

\begin{lemma}\label{lem:dyadic}
Let $R = \ceil{\log n}$. 
Then for $p \geq 1$, 
\beq\label{eq:dyadic}
\Exp\Big[
\Big\|\sum_{i=1}^k\epsilon_iA_i\Big\|_\pnorm
\Big]
\leq
2^t\sum_{{\bf r} \in [R]^t} \frac{\Exp\Big[
\max\Big\{\Big(\sum_{i=1}^k\epsilon_i A_i\Big)({\bf x})
\st
{\bf x} \in \Cball_{2^{r_1}}^n\times\cdots\times\Cball_{2^{r_t}}^n\Big\}
\Big]}{2^{\tfrac{r_1+\cdots+r_t}{p}}}
.
\eeq
\end{lemma}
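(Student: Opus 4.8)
The plan is to expand each argument vector $x[j] \in \R^n \smallsetminus \{{\bf 0}\}$ realizing (or nearly realizing) the norm $\|\sum_i \epsilon_i A_i\|_\pnorm$ as a sum of $O(\log n)$ pieces, each of which is supported on a ``dyadic level set'' where all nonzero coordinates have roughly equal magnitude, and then to apply multilinearity. First I would fix ${\bf x} = (x[1],\dots,x[t])$ with each $\|x[j]\|_{\ell_p} \le 1$; by homogeneity it suffices to bound $(\sum_i \epsilon_i A_i)({\bf x})$ over such ${\bf x}$. For a single vector $y$ with $\|y\|_{\ell_p}\le 1$, write $y = \sum_{r=1}^{R} y^{(r)}$ where $y^{(r)}$ retains the coordinates of $y$ whose absolute value lies in the dyadic band $(2^{-r}, 2^{-r+1}]$ (coordinates smaller than $2^{-R} \le 1/n$ can be absorbed, losing at most a constant factor since there are at most $n$ of them each contributing at most $1/n$ times the relevant plane-sub-stochastic bound — this is where $R = \ceil{\log n}$ enters). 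Each $y^{(r)}$, after rescaling by $2^{r}$ to make its nonzero entries lie in $[-1,1]$, has at most $2^{rp}$ nonzero coordinates because $\|y\|_{\ell_p}^p \le 1$ forces $|\{s : |y_s| > 2^{-r}\}| < 2^{rp}$. Rounding the rescaled nonzero entries to $\pm 1$ (again losing a factor $2$ per coordinate, hence a factor $2^t$ overall across the $t$ arguments) lands us in $\Cball_{2^r}^n$, up to taking $d = \min\{2^r, n\}$ as in~\eqref{eq:Hball}.

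Next I would substitute these decompositions into all $t$ slots: by $t$-linearity of $\sum_i \epsilon_i A_i$,
\beqn
\Big(\sum_i \epsilon_i A_i\Big)(x[1],\dots,x[t])
= \sum_{{\bf r}\in[R]^t} \Big(\sum_i \epsilon_i A_i\Big)\big(x[1]^{(r_1)},\dots,x[t]^{(r_t)}\big),
\eeqn
and then bound $x[j]^{(r_j)}$ coordinatewise in absolute value by $2^{-r_j+1}$ times an element of $\Cball_{2^{r_j}}^n$. Because each $A_i$ (in fact $\sum_i \epsilon_i A_i$ restricted to nonnegative test vectors) has the property that $|A_i(z[1],\dots,z[t])|$ is nondecreasing under replacing each $z[j]$ by a vector dominating it entrywise in absolute value — this uses that $|A_i|$ is plane sub-stochastic, so $A_i$ maps the nonnegative orthant appropriately and sign patterns can be pushed onto the $\epsilon_i$ or onto the $\pm 1$ entries of the $\Cball$ vectors — each summand is at most $2^{t}\,2^{-(r_1+\cdots+r_t)}$ times $\max\{(\sum_i \epsilon_i A_i)({\bf w}) : {\bf w}\in \Cball_{2^{r_1}}^n\times\cdots\times\Cball_{2^{r_t}}^n\}$. (Strictly, one should note $2^{-r_j+1} = 2\cdot 2^{-r_j}$, which is where the leading $2^t$ in~\eqref{eq:dyadic} comes from; the intermediate rounding losses can be folded into the same constant, or the statement read with the cruder $2^t$ absorbing all of them.) Summing over ${\bf r}\in[R]^t$ and then taking $\Exp$ over $\epsilon$, with $\Exp[\max]$ of a sum bounded by the sum of $\Exp[\max]$'s, yields exactly~\eqref{eq:dyadic} after recognizing $2^{-(r_1+\cdots+r_t)} = 2^{-(r_1+\cdots+r_t)/p}\cdot 2^{-(r_1+\cdots+r_t)(1-1/p)}$ — wait, this suggests the exponent should be the full $r_1+\cdots+r_t$, not divided by $p$, so let me reconsider: the $\ell_p$ normalization is what saves a factor, because the rescaled level-set vector has $\le 2^{r_jp}$ nonzero entries but we only divide by $2^{r_j}$ per slot, and it is more efficient to keep $2^{r_j p/p}=2^{r_j}$ worth of ``mass'' — the correct bookkeeping gives the coefficient $2^{-(r_1+\cdots+r_t)/p}$ once one observes $\|x[j]^{(r_j)}\|_{\ell_p}\le 1$ directly bounds things, rather than peeling off a uniform $2^{-r_j}$ per coordinate.

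The main obstacle, and the step I would be most careful about, is precisely this last bookkeeping: getting the exponent $(r_1+\cdots+r_t)/p$ rather than $r_1+\cdots+r_t$. The clean way is \emph{not} to bound $x[j]^{(r_j)}$ entrywise by $2^{-r_j+1}$ and forget how many nonzero coordinates it has, but to write $x[j]^{(r_j)} = 2^{-r_j+1}\, u[j]$ where $u[j]\in\R^n$ has entries in $[-1,1]$ supported on a set of size $m_j := \|x[j]^{(r_j)}\|_{\ell_0} \le \min\{2^{r_jp},n\}$, then bound $(\sum_i\epsilon_i A_i)(u[1],\dots,u[t])$ by $\max$ over $\Cball_{m_1}^n\times\cdots\times\Cball_{m_t}^n$ up to a factor $2^t$ from sign-rounding, and finally note $2^{-r_j+1}$ combined with $m_j^{1/p}\le 2^{r_j}$ (from the $\ell_p$ bound) gives total weight $\prod_j 2^{-r_j+1} = 2^t \prod_j 2^{-r_j}$, while $m_j \le 2^{r_jp}$ means $\Cball_{m_j}^n \subseteq \Cball_{2^{r_j}}^n$ only after replacing the index $m_j$ by the dyadic ceiling — so one re-indexes the sum by $r_j$ with $2^{r_j}$ playing the role of the support size, and the $\ell_p$ normalization $\|x[j]\|_{\ell_p}\le 1$ forces $\sum_{r} 2^{-r_j}\cdot(\text{count at level }r_j)$ to telescope, producing the $1/p$ in the exponent. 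I would organize the write-up so that the $\ell_p$ normalization is invoked at exactly the point where coordinate counts are converted to powers of two, making the appearance of $2^{(r_1+\cdots+r_t)/p}$ in the denominator transparent; everything else (multilinearity, monotonicity of plane-sub-stochastic forms under entrywise domination, pulling $\Exp$ through the finite sum, the crude $2^t$ for sign rounding and small-coordinate truncation) is routine.
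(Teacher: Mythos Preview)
Your overall strategy---decompose each argument into dyadic level sets, expand by multilinearity, then pass to $\{-1,0,1\}$-vectors by convexity---is exactly what the paper does. The gap is precisely where you flag it: your bands at magnitude $(2^{-r},2^{-r+1}]$ give support size at most $2^{rp}$ and scaling factor $2\cdot 2^{-r}$ per slot, which after re-indexing lands you in $\Cball_{2^{r_j p}}^n$, not $\Cball_{2^{r_j}}^n$, and the ``telescoping'' you invoke at the end does not repair this.

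The paper resolves the bookkeeping in one stroke by placing the dyadic bands at scale $2^{-r/p}$ rather than $2^{-r}$: for $r<R$ it sets
\[
\mathcal S_p(r)=\Big(\big[-\tfrac{2}{2^{r/p}},-\tfrac{1}{2^{r/p}}\big)\cup\{0\}\cup\big(\tfrac{1}{2^{r/p}},\tfrac{2}{2^{r/p}}\big]\Big)^n\cap B_p^n,
\qquad
\mathcal S_p(R)=\big[-2^{-R/p},2^{-R/p}\big]^n.
\]
Any $x\in\mathcal S_p(r)$ then automatically has at most $2^r$ nonzero entries (since $\|x\|_{\ell_p}\le 1$ and each nonzero entry exceeds $2^{-r/p}$), and every nonzero entry has magnitude at most $2\cdot 2^{-r/p}$. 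Multilinearity now pulls out exactly the factor $2^t\cdot 2^{-(r_1+\cdots+r_t)/p}$, and convexity in each slot replaces what remains by the maximum over $\Cball_{2^{r_1}}^n\times\cdots\times\Cball_{2^{r_t}}^n$. Two further simplifications relative to your plan: the last band $\mathcal S_p(R)$ already contains all small coordinates, so no separate truncation-and-absorption step is needed; and the passage to sign vectors uses only linearity in each argument (the maximum of a linear function over $[-1,1]$ is attained at $\pm 1$), so your appeal to entrywise monotonicity of $|A_i|$ is unnecessary---and indeed would not apply directly to the signed sum $\sum_i\epsilon_i A_i$.
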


\begin{proof}
Partition the unit ball~$B_{p}^n$ of $\ell_p^n$ into $R$ pieces defined for each $r\in [R]$ by
\begin{align*}\label{eq:Spdef}
\mathcal S_t(r) &= \big([-\tfrac{2}{2^{r/p}}, -\tfrac{1}{2^{r/p}}\big)\cup \{0\} \cup \big(\tfrac{1}{2^{r/p}}, \tfrac{2}{2^{r/p}}]\big)^n\cap B_{p}^n
\quad\quad
\text{and}
\quad\quad
\mathcal S_t(R) = [-\tfrac{1}{2^{R/p}}, \tfrac{1}{2^{R/p}}]^n.
\end{align*}
Then, since each~$A_i$ is linear in each of its arguments, 
\begin{align*}
\Exp\Big[
\Big\|\sum_{i=1}^k\epsilon_iA_i\Big\|_\pnorm
\Big]
&=
\Exp\Big[
\sup\Big\{
\Big(\sum_{i=1}^k\epsilon_iA_i\Big)({\bf x})
\st
{\bf x} \in B_{p}^n\times\cdots\times B_{p}^n
\Big\}
\Big]
\\
&\leq
\sum_{{\bf r} \in [R]^t}
\Exp\Big[
\sup\Big\{
\Big(\sum_{i=1}^k\epsilon_iA_i\Big)({\bf x})
\st
{\bf x} \in \mathcal S_{p}(r_1)\times\cdots\times \mathcal S_{p}(r_t)
\Big\}
\Big].
\end{align*}

Note that any~$x\in \mathcal S_p(r)$ has at most~$2^{r}$ nonzero entries and that those entries have magnitude at most $2/2^{r/p}$.
Hence, by multi-linearity of the~$A_i$ and convexity, the above suprema are bounded from above by
\beqn
\frac{2^t}{2^{\frac{r_1 + \dots +r_t}{p}}}
\max\Big\{\Big(\sum_{i=1}^k\epsilon_i A_i\Big)({\bf x})
\st
{\bf x} \in \Cball_{2^{r_1}}^n\times\cdots\times\Cball_{2^{r_t}}^n\Big\}.\qedhere
\eeqn
\end{proof}

The above lemma thus reduces the problem of bounding the expectations of Theorem~\ref{thm:process} to bounding each of the expectations appearing in the right-hand side of~\eqref{eq:dyadic}.
The following lemma provides the bounds we need.

\begin{lemma}\label{lem:smallprocess}
Let ${\bf d} \in [n]^t$.
Let $\min({\bf d}) = \min\{d_1,\dots,d_t\}$ and let $\max({\bf d}) = \max\{d_1,\dots, d_t\}$.
Then, for $\Cball_{d}^n$ as in~\eqref{eq:Hball}, we have
\beqn
\Exp \Big[\max\Big\{\Big(\sum_{i=1}^k\epsilon_iA_i({\bf x}) \st x[s] \in \Cball_{d_s}^n,\, s\in[t]\Big\}\Big]
\leq
C(t)\sqrt{k\max({\bf d})\log n}\, \min({\bf d})^{1 - \frac{1}{2t}},
\eeqn
where $C(t)\in (0,\infty)$ depends on~$t$ only.
\end{lemma}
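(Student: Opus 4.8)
\textbf{Proof proposal for Lemma~\ref{lem:smallprocess}.}

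The plan is to bound the expected maximum of the Bernoulli process
$\mathbf{x}\mapsto\big(\sum_i\epsilon_iA_i\big)(\mathbf{x})$
over the finite index set
$\mathcal C_{d_1}^n\times\cdots\times\mathcal C_{d_t}^n$
by Dudley's integral inequality, which reduces the task to estimating metric-entropy (covering-number) bounds for this index set with respect to the $\ell_2$-type metric induced by the process. For a fixed $\mathbf{x}$, the increment $\big(\sum_i\epsilon_iA_i\big)(\mathbf{x})$ is a sum of independent centered terms $\epsilon_iA_i(\mathbf{x})$, each bounded in magnitude; by Hoeffding's inequality it is subgaussian with variance proxy controlled by $\sum_i A_i(\mathbf{x})^2$. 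So the natural pseudo-metric is $\rho(\mathbf{x},\mathbf{y})^2=\sum_{i=1}^k\big(A_i(\mathbf{x})-A_i(\mathbf{y})\big)^2$, and by multilinearity an increment in a single coordinate $x[s]\to y[s]$ is controlled through the plane-substochasticity of the $A_i$. First I would record the two crude bounds: the diameter of the index set in $\rho$ is $O(\sqrt{k\,\Delta})$ for an appropriate $\Delta$ coming from the sub-stochastic normalization (each $A_i(\mathbf{x})$ with $\mathbf{x}$ supported on few coordinates is small), and the index set has size at most $(3n)^{d_1+\cdots+d_t}\le n^{O(t\max(\mathbf{d}))}$, so $\log N\le O(t\max(\mathbf{d})\log n)$ at any scale. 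Feeding the trivial entropy bound $\log N(\varepsilon)\le O(\max(\mathbf{d})\log n)$ uniformly into Dudley's integral $\int_0^{\mathrm{diam}}\sqrt{\log N(\varepsilon)}\,d\varepsilon$ already yields the $\sqrt{k\max(\mathbf{d})\log n}$ factor, with diameter $\sqrt{k}$ up to normalization.

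The remaining point — and the source of the improved exponent $\min(\mathbf{d})^{1-1/(2t)}$ rather than $\min(\mathbf{d})$ — is a sharper diameter/variance estimate. The key observation is that for $\mathbf{x}\in\mathcal C_{d_1}^n\times\cdots\times\mathcal C_{d_t}^n$ one has, coordinate by coordinate, $|A_i(\mathbf{x})|\le\prod_s\min\{d_s,\text{something}\}$ but averaged over $i$ one gets much better: summing the substochasticity constraints~\eqref{eq:stoch} over the support of one argument gives that $\sum_s |A_i(e_{s_1},\dots,e_{s_t})|$-type quantities telescope. More precisely, I would bound $\sum_{i=1}^k A_i(\mathbf{x})^2$ by interpolating: $A_i$ is plane sub-stochastic, so $\sum_{\mathbf{x}}A_i(\mathbf{x})^2$ over a product of Hamming balls of radii $\mathbf{d}$ is at most (number of relevant tuples in the smallest $t-1$ directions) times $\max(\mathbf{d})$ times a sub-stochastic $\ell_1$ bound, and optimizing the split of the $\ell_1$/$\ell_\infty$ budget across the $t$ arguments produces the asymmetric exponent $\max(\mathbf{d})^{1/2}\min(\mathbf{d})^{1-1/(2t)}$. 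This is exactly the pigeonhole/counting step where the $1/(2t)$ saving over the naive bound comes from: distributing the contraction against $\mathbf 1$-vectors among $t$ coordinates, each coordinate absorbs a $\min(\mathbf{d})^{1/t}$ loss off the trivial $\min(\mathbf{d})$, and the square root in Dudley halves it to $1/(2t)$.

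I would then assemble the pieces: symmetrize/condition so that the process is a genuine Rademacher chaos of degree~1 in the $\epsilon_i$ (it already is, with the $A_i$ fixed), apply the subgaussian tail for each increment, invoke Dudley's inequality (as stated e.g.\ in \cite{Ledoux:1991}) with the uniform entropy estimate $\log N(\varepsilon)\lesssim t\max(\mathbf{d})\log n$ valid down to the scale where $N(\varepsilon)=1$, and multiply by the sharpened diameter $\sqrt{k}\,\min(\mathbf{d})^{1/2-1/(2t)}$ (after renormalizing the substochastic forms so the $k$ factor is explicit). Collecting constants depending only on $t$ gives the claimed bound $C(t)\sqrt{k\max(\mathbf{d})\log n}\,\min(\mathbf{d})^{1-1/(2t)}$.

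The main obstacle I anticipate is the second paragraph: obtaining the asymmetric variance bound $\sum_i A_i(\mathbf{x})^2\lesssim \max(\mathbf{d})\min(\mathbf{d})^{2-1/t}$ (up to normalization and log factors) from plane sub-stochasticity. The constraints~\eqref{eq:stoch} are one-dimensional marginal bounds, and squeezing a $t$-fold product estimate with the correct $1-1/(2t)$ exponent out of them requires carefully iterating the marginal bound across coordinates and then using Cauchy--Schwarz or a power-mean inequality to redistribute the budget symmetrically among the $t$ arguments; getting the exponent exactly right (rather than off by a constant factor in the exponent) is the delicate part, and it is presumably why the paper isolates this as a separate lemma and constructs $\varepsilon$-nets probabilistically rather than by a direct volumetric argument.
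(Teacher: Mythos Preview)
Your overall framework is right --- the paper does exactly view the expected maximum as a Bernoulli process, endows the index set $\Lambda_{\bf d}$ with the $\ell_2$ metric via Hoeffding, and applies Dudley's inequality (Corollary~\ref{cor:dudley-lambda}). The diameter bound you quote, $\diam(\Lambda_{\bf d})\le\sqrt{k}\,\min({\bf d})$, is also exactly Proposition~\ref{prop:diameter}.

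The gap is in where you locate the $\min({\bf d})^{-1/(2t)}$ saving. You propose to use the \emph{trivial} entropy bound $\log N(\Lambda_{\bf d},\ell_2,\eps)\lesssim t\max({\bf d})\log n$ at all scales and then recover the saving from a ``sharpened diameter'' estimate of the form $\sum_i A_i({\bf x})^2\lesssim \max({\bf d})\min({\bf d})^{2-1/t}$. But this pointwise variance bound is false: for any fixed ${\bf x}\in\Cball_{d_1}^n\times\cdots\times\Cball_{d_t}^n$, plane sub-stochasticity only gives $|A_i({\bf x})|\le\min({\bf d})$, and this is attained (e.g.\ when each $A_i$ is a permutation-type form), so $\sum_i A_i({\bf x})^2$ can genuinely be $k\min({\bf d})^2$ and the diameter is genuinely $\sqrt{k}\,\min({\bf d})$. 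With the scale-independent entropy bound, Dudley's integral then just gives $\diam(\Lambda_{\bf d})\cdot\sqrt{\max({\bf d})\log n}=\sqrt{k\max({\bf d})\log n}\,\min({\bf d})$, which is off by exactly the $\min({\bf d})^{1/(2t)}$ factor you need.

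The paper obtains the saving not in the diameter but in a \emph{scale-dependent} covering-number bound (Lemma~\ref{lem:maurey}): via a Maurey-type random sparsification of each argument $x[s]$, it shows that for $\eps\ge1$ one has an $\eps\sqrt{k\min({\bf d})}$-net of size $n^{2t\max({\bf d})/\eps^{2/t}}$, i.e.\ $\log N$ decays like $\eps^{-2/t}$. It is this decay that, when integrated $\int_1^{\sqrt{\min({\bf d})}}\eps^{-1/t}\,d\eps\asymp\min({\bf d})^{1/2-1/(2t)}$ in Dudley, produces the exponent $1-\tfrac{1}{2t}$. So the probabilistic net construction you allude to in your last paragraph is not an alternative proof of the variance bound --- it is the actual mechanism, and it replaces your second paragraph entirely.
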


\begin{proof}[ of Theorem~\ref{thm:process}]
Combining Lemmas~\ref{lem:dyadic} and~\ref{lem:smallprocess}  shows that the left-hand side of~\eqref{eq:process-val} is at most
\beqn
C''\sqrt{k\log n}\sum_{r_1,\dots,r_t=1}^{\log n} \frac{\sqrt{\max_s(2^{r_s})}\min_s(2^{r_s})^{1 - 1/(2t)}}{2^{(r_1 + \cdots + r_t)/p}}.
\eeqn

We claim that each of the above fractions is at most $n^{\frac12 - \frac1p}\max\{1, n^{1 - \frac{1}{2t}-\frac{t-1}{p}}\}$, from which the claim follows.
Indeed, considering the square of these fractions, for any ${\bf d} = (d_1,\dots,d_t)$ such that $d_1 \geq d_2\geq \cdots \geq d_t$, we have
\begin{align*}
\frac{\max({\bf d})\min({\bf d})^{2 - 1/t}}{(d_1\cdots d_t)^{2/p}}
&=
\frac{d_1 d_t^{2 - 1/t}}{(d_1\cdots d_t)^{2/p}}\\
&\leq
d_1^{1 - 2/p} \frac{d_t^{2 - 1/t}}{d_t^{2(t-1)/p}}\\
&=
d_1^{1 - 2/p} \max\{1, d_t^{2-1/t-2t/p+2/p}\}\\
&\leq
n^{1 - 2/p}\, \max\{1, n^{2 - 1/t-2(t-1)/p}\},
\end{align*}
as claimed.
\end{proof}

\subsection{Dudley's integral inequality}

To prove Lemma~\ref{lem:smallprocess} we use Dudley's \emph{integral inequality} (see for example~\cite[Lemma~2.2.1 and Eq.~(2.38)]{Talagrand:2014}), 
which bounds the expected supremum of a stochastic process endowed with a metric space structure in terms of covering numbers.
For a metric space $(\Lambda, d)$ and $\eps > 0$, an \emph{$\eps$-net} is a subset $\Lambda'\subseteq \Lambda$ such that for every~$\lambda\in \Lambda$ there exists an $\gamma\in\Lambda'$ with distance $d(\lambda,\gamma)\leq \eps$
and the \emph{covering number} $N(\Lambda,d,\eps)$ is the smallest integer~$N$ such that $(\Lambda,d)$ admits an~$\eps$-net of size~$N$.
The \emph{diameter} of a metric space $(\Lambda, d)$ is given by $\diam(\Lambda) = \sup\{d(\lambda,\gamma)\st \lambda,\gamma\in \Lambda\}$.

\begin{theorem}[Dudley's integral inequality]
\label{thm:dudley}
There exists an absolute constant~$C\in (0,\infty)$ such that the following holds.
Let~$\Lambda$ be a finite set and $d:\Lambda\times\Lambda\to\R_+$ be a metric on~$\Lambda$.
Let $(X_\lambda)_{\lambda\in \Lambda}$ be a collection of centered random variables such that for every $\lambda,\gamma\in\Lambda$ and any~$\eps > 0$, we have
\beqn
\Pr\big[|X_{\lambda} - X_{\gamma}| > \eps\big]
\leq
2\exp\Big(-\frac{\eps^2}{d(\lambda,\gamma)^2}\Big).
\eeqn
Then,
\beqn
\Exp\big[\max_{\lambda\in \Lambda} X_\lambda\big]
\leq
C\int_0^{\diam(\Lambda)}\sqrt{\log N(\Lambda, d, \eps)}\, d\eps.
\eeqn
\end{theorem}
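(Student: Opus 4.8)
The plan is to run the standard chaining argument due to Dudley. Write $\Delta = \diam(\Lambda)$; we may assume $\Delta > 0$, since otherwise both sides vanish. Because every $X_\lambda$ is centered, for any fixed $\lambda_0\in\Lambda$ we have $\Exp[\max_\lambda X_\lambda] = \Exp[\max_\lambda(X_\lambda - X_{\lambda_0})]$, so it suffices to control increments along a hierarchy of approximations. For each integer $j\geq 0$ let $\Lambda_j\subseteq\Lambda$ be a minimal $\Delta 2^{-j}$-net, so $|\Lambda_j| = N(\Lambda, d, \Delta 2^{-j}) =: N_j$; since a single point is a $\Delta$-net, $N_0 = 1$ and we take $\lambda_0$ to be the unique element of $\Lambda_0$. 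As $\Lambda$ is finite, for $J$ with $\Delta 2^{-J}$ below the least positive distance occurring in $\Lambda$, the only $\Delta 2^{-J}$-net is $\Lambda$ itself, so $\Lambda_J = \Lambda$. For each $\lambda$ and $0\leq j\leq J$ pick $\pi_j(\lambda)\in\Lambda_j$ with $d(\lambda,\pi_j(\lambda))\leq\Delta 2^{-j}$; then $\pi_0(\lambda) = \lambda_0$, $\pi_J(\lambda) = \lambda$, and telescoping gives $X_\lambda - X_{\lambda_0} = \sum_{j=1}^{J}(X_{\pi_j(\lambda)} - X_{\pi_{j-1}(\lambda)})$ for every $\lambda\in\Lambda$.

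Next I would bound each level of the chain. Monotonicity of covering numbers gives $N_{j-1}\leq N_j$, so as $\lambda$ varies the pair $(\pi_j(\lambda),\pi_{j-1}(\lambda))$ takes at most $N_j^2$ distinct values, and for each such pair $(u,v)$ the triangle inequality gives $d(u,v)\leq\Delta 2^{-j}+\Delta 2^{-(j-1)}\leq 3\Delta 2^{-j}$, hence the hypothesis yields $\Pr[|X_u - X_v| > s]\leq 2\exp(-s^2/(3\Delta 2^{-j})^2)$ for all $s>0$. I would then invoke the elementary maximal inequality: if $Y_1,\dots,Y_m$ (not necessarily independent) satisfy $\Pr[|Y_i|>s]\leq 2e^{-s^2/b^2}$ for all $s>0$, then $\Exp[\max_i Y_i]\leq\Exp[\max_i|Y_i|]\leq C_0\,b\sqrt{1+\log m}$, proved by integrating the union bound $\Pr[\max_i|Y_i|>s]\leq\min(1,2m e^{-s^2/b^2})$ and cutting at $s\asymp b\sqrt{\log m}$. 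Applied with $m = N_j^2$ and $b = 3\Delta 2^{-j}$ (the case $N_j=1$ being trivial, so the additive constant can be absorbed), this gives $\Exp[\max_\lambda(X_{\pi_j(\lambda)}-X_{\pi_{j-1}(\lambda)})]\leq C_1\,\Delta 2^{-j}\sqrt{\log N_j}$.

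Finally I would sum and pass to the integral. Taking expectations in the telescoping identity and using $\max_\lambda\sum_j Z_{j,\lambda}\leq\sum_j\max_\lambda Z_{j,\lambda}$ pointwise gives $\Exp[\max_\lambda X_\lambda]\leq C_1\sum_{j\geq1}\Delta 2^{-j}\sqrt{\log N_j}$. For $\eps\in[\Delta 2^{-j-1},\Delta 2^{-j}]$, monotonicity again gives $N(\Lambda,d,\eps)\geq N_j$, so $\int_{\Delta 2^{-j-1}}^{\Delta 2^{-j}}\sqrt{\log N(\Lambda,d,\eps)}\,d\eps\geq\tfrac12\Delta 2^{-j}\sqrt{\log N_j}$; since the intervals $[\Delta 2^{-j-1},\Delta 2^{-j}]$, $j\geq1$, tile $(0,\Delta/2]$, summing yields $\sum_{j\geq1}\Delta 2^{-j}\sqrt{\log N_j}\leq 2\int_0^{\Delta}\sqrt{\log N(\Lambda,d,\eps)}\,d\eps$, which is the claim with $C = 2C_1$. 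The argument is routine; the only places needing care are (i) termination of the chain, which uses finiteness of $\Lambda$, (ii) the sub-Gaussian maximal inequality, and (iii) lining up the dyadic scales with covering-number monotonicity so the sum is dominated by the integral. I do not expect a genuine obstacle here — the main task is tracking constants so that a single absolute $C$ emerges at the end.
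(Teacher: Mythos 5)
Your proof is correct, and it is the standard Dudley chaining argument. The paper itself does not prove this theorem: it imports it directly from Talagrand's book (\cite[Lemma~2.2.1 and Eq.~(2.38)]{Talagrand:2014}), so there is no ``paper's own proof'' to compare against. Your argument is exactly the classical route one would find in that reference: build a dyadic hierarchy of $\Delta 2^{-j}$-nets, telescope $X_\lambda - X_{\lambda_0}$ along nearest-net projections $\pi_j$, bound each level with the finite sub-Gaussian maximal inequality applied to the at most $N_j^2$ increment pairs with step size $O(\Delta 2^{-j})$, and dominate the resulting geometric-scale sum by the entropy integral using monotonicity of $\eps\mapsto N(\Lambda,d,\eps)$. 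The two places you flag as needing care --- using finiteness of $\Lambda$ to terminate the chain at some $\Lambda_J = \Lambda$, and absorbing the $+1$ inside $\sqrt{1+\log N_j^2}$ by noting the increment vanishes when $N_j=1$ --- are handled correctly, and the constants assemble into a single absolute $C$ as claimed.
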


The following set is relevant to our setting:
\beq\label{def:Lambda}
\Lambda_{\bf d}
=
\big\{\big(A_1({\bf x}),\dots, A_k({\bf x})\big)
\st
x[s] \in \Cball_{d_s}^n,\, s\in[t]
\big\}
\subseteq\R^k.
\eeq
For each $\lambda \in \Lambda_{\bf d}$ consider the (centered) random variable $X_\lambda = \langle \epsilon,\lambda\rangle$, so that the left-hand side of~\eqref{eq:process-val} equals $\Exp[\max_{\lambda\in\Lambda_{\bf d}}X_\lambda]$.
Moreover, for every $\lambda,\gamma\in\Lambda_{\bf d}$ and $\eps > 0$, we have
\begin{align}
\Pr\big[|X_\lambda - X_\gamma| > \eps\big] &=
\Pr\Big[\Big| \sum_{i=1}^k\epsilon_i (\lambda_i - \gamma_i) \Big| > \eps\Big]
\leq
2\exp\Big(-\frac{\eps^2}{2\|\lambda - \gamma\|_{\ell_2}^2}\Big),\label{eq:increment}
\end{align}
where the second line follows from Hoeffding's inequality~\cite[Theorem~2.8]{Boucheron:2013}.
We shall therefore consider the metric space $(\Lambda_{\bf d}, \ell_2)$.
For our setting, the relevant form of Dudley's inequality is then as follows.

\begin{corollary}\label{cor:dudley-lambda}
There exists an absolute constant $C\in(0,\infty)$ such that the following holds.
Let~$\Lambda_{\bf d}\subseteq \R^k$ be as in~\eqref{def:Lambda}.
Then,
\beq\label{eq:dudley}
\Exp\big[\max_{\lambda\in\Lambda_{\bf d}}\langle \epsilon, \lambda\rangle\big] \leq C\int_0^{\diam(\Lambda_{\bf d})}\sqrt{\log N(\Lambda_{\bf d}, \ell_2, \eps)}\, d\eps.
\eeq
\end{corollary}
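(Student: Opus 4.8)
The plan is to obtain Corollary~\ref{cor:dudley-lambda} as a direct specialization of Dudley's integral inequality (Theorem~\ref{thm:dudley}) to the metric space $(\Lambda_{\bf d}, \ell_2)$ equipped with the centered random variables $X_\lambda = \langle \epsilon, \lambda\rangle$, $\lambda \in \Lambda_{\bf d}$. First I would verify the hypotheses of Theorem~\ref{thm:dudley}. The index set $\Lambda_{\bf d}$ is finite: each $\Cball_{d_s}^n$ is a subset of $\{-1,0,1\}^n$ and hence finite, so the product $\Cball_{d_1}^n\times\cdots\times\Cball_{d_t}^n$ is finite, and $\Lambda_{\bf d}$ is its image under the map ${\bf x}\mapsto (A_1({\bf x}),\dots,A_k({\bf x}))$. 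The Euclidean norm induces a genuine metric $\ell_2$ on the finite subset $\Lambda_{\bf d}\subseteq\R^k$. Each $X_\lambda$ is centered, since $\Exp[X_\lambda]=\sum_{i=1}^k\Exp[\epsilon_i]\lambda_i=0$.

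Next I would record the sub-Gaussian increment condition. For $\lambda,\gamma\in\Lambda_{\bf d}$ we have $X_\lambda-X_\gamma=\sum_{i=1}^k\epsilon_i(\lambda_i-\gamma_i)$, a sum of independent bounded centered random variables, so Hoeffding's inequality gives exactly the bound already displayed in~\eqref{eq:increment}, namely $\Pr[|X_\lambda-X_\gamma|>\eps]\le 2\exp(-\eps^2/(2\|\lambda-\gamma\|_{\ell_2}^2))$. This is precisely the hypothesis of Theorem~\ref{thm:dudley} with the metric $d(\lambda,\gamma)=\sqrt 2\,\|\lambda-\gamma\|_{\ell_2}$. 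Applying Theorem~\ref{thm:dudley} with this $d$ therefore yields
\[
\Exp\big[\max_{\lambda\in\Lambda_{\bf d}}\langle\epsilon,\lambda\rangle\big]\le C\int_0^{\sqrt 2\,\diam(\Lambda_{\bf d})}\sqrt{\log N(\Lambda_{\bf d},\sqrt 2\,\ell_2,\eps)}\,d\eps,
\]
where $\diam(\Lambda_{\bf d})$ here is the $\ell_2$-diameter. Since $N(\Lambda_{\bf d},\sqrt 2\,\ell_2,\eps)=N(\Lambda_{\bf d},\ell_2,\eps/\sqrt 2)$, the substitution $u=\eps/\sqrt 2$ turns the right-hand side into $C\sqrt 2\int_0^{\diam(\Lambda_{\bf d})}\sqrt{\log N(\Lambda_{\bf d},\ell_2,u)}\,du$, which is~\eqref{eq:dudley} after renaming the absolute constant.

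There is no substantial obstacle here: all the content is in verifying that the generic statement of Dudley's inequality applies to this particular process, and the only mild subtlety is carrying the harmless factor $\sqrt 2$ coming from the variance proxy in Hoeffding's bound through the change of variables in the integral, which only affects the value of the absolute constant.
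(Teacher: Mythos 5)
Your proof is correct and follows essentially the same route as the paper: both specialize Dudley's inequality (Theorem~\ref{thm:dudley}) to the metric $d=\sqrt 2\,\ell_2$ on~$\Lambda_{\bf d}$ using the Hoeffding increment bound~\eqref{eq:increment}, then absorb the factor~$\sqrt 2$ by a change of variables. The extra verification that~$\Lambda_{\bf d}$ is finite and that the~$X_\lambda$ are centered is sound but is left implicit in the paper.
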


\begin{proof}
Let $d$ be the metric on~$\Lambda_{\bf d}$ given by $d(\lambda,\gamma) = \sqrt{2}\|\lambda - \gamma\|_{\ell_2}$.
Then, the diameter of~$\Lambda_{\bf d}$ under~$d$ equals $\sqrt{2}\diam(\Lambda_{\bf d})$.
Moreover, since an $\eps$-net for $(\Lambda_{\bf d}, d)$ is an $\eps/\sqrt{2}$-net for $(\Lambda_{\bf d},\ell_2)$ and \emph{vice versa}, we have $N(\Lambda_{\bf d}, d,\eps) = N(\Lambda_{\bf d}, \ell_2, \eps/\sqrt{2})$.
By~\eqref{eq:increment},
 Theorem~\ref{thm:dudley} and a change of variables, the left-hand side of~\eqref{eq:dudley} is therefore at most
\begin{align*}
C\int_0^{\sqrt{2}\diam(\Lambda_{\bf d})} \sqrt{\log N(\Lambda_{\bf d},d,\eps)}\,d\eps
&=
C\int_0^{\sqrt{2}\diam(\Lambda_{\bf d})} \sqrt{\log N(\Lambda_{\bf d},\ell_2,\eps/\sqrt{2})}\,d\eps\\
&=
\sqrt{2}C\int_0^{\diam(\Lambda_{\bf d})} \sqrt{\log N(\Lambda_{\bf d},\ell_2,\delta)}\,d\delta
\end{align*}
\end{proof}

To apply the above result we need a bound on the diameter of~$\Lambda_{\bf d}$ and its covering numbers.

\begin{proposition}\label{prop:diameter}
We have
$\diam(\Lambda_{\bf d}) \leq \min({\bf d})\sqrt{k}$.
\end{proposition}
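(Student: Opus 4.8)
The plan is to bound the $\ell_2$-distance between any two points $\lambda, \gamma \in \Lambda_{\bf d}$ directly, using the structure of $\Pi_n^{(t)}$. By definition, $\lambda = (A_1({\bf x}), \dots, A_k({\bf x}))$ and $\gamma = (A_1({\bf y}), \dots, A_k({\bf y}))$ for tuples ${\bf x} = (x[1],\dots,x[t])$ and ${\bf y} = (y[1],\dots,y[t])$ with $x[s], y[s] \in \Cball_{d_s}^n$. Then $\|\lambda - \gamma\|_{\ell_2}^2 = \sum_{i=1}^k (A_i({\bf x}) - A_i({\bf y}))^2$, so since $\diam(\Lambda_{\bf d}) = \sup \|\lambda - \gamma\|_{\ell_2}$, it suffices to show $\sum_{i=1}^k (A_i({\bf x}) - A_i({\bf y}))^2 \leq \min({\bf d})^2 k$, which would follow from the pointwise bound $|A_i({\bf x}) - A_i({\bf y})| \leq \min({\bf d})$ for each $i$. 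Actually it suffices to show $|A_i({\bf x})| \le \min({\bf d})/2$ for every ${\bf x}$ of this form (then use the triangle inequality), or more directly just to bound $|A_i({\bf x})|$ by $\min({\bf d})$ and $|A_i({\bf y})|$ likewise and note a small amount of slack — but let me aim for the cleaner route.

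The key step is the following claim: for any $A \in \Pi_n^{(t)}$ and any ${\bf x} = (x[1],\dots,x[t])$ with $x[s] \in \Cball_{d_s}^n$, we have $|A({\bf x})| \leq \min({\bf d})$. To see this, write $A({\bf x}) = \sum_{s_1,\dots,s_t} A(e_{s_1},\dots,e_{s_t}) \prod_{j} x[j]_{s_j}$, so $|A({\bf x})| \leq \sum_{s_1,\dots,s_t} |A(e_{s_1},\dots,e_{s_t})| \prod_j |x[j]_{s_j}| = |A|(|x[1]|,\dots,|x[t]|)$ where $|x[j]| \in \{0,1\}^n$ has exactly $\min\{d_j,n\} = d_j$ ones (using $d_j \in [n]$). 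Now let $\ell$ be the index achieving $\min({\bf d})$; bound all the factors $|x[j]|$ for $j \neq \ell$ coordinatewise by ${\bf 1}$, which is valid since $|A|$ is nonnegative on basis vectors and hence monotone in each argument on the nonnegative orthant. This gives $|A|(|x[1]|,\dots,|x[t]|) \leq |A|({\bf 1},\dots,{\bf 1},|x[\ell]|,{\bf 1},\dots,{\bf 1})$ with $|x[\ell]|$ in slot $\ell$. Expanding this last expression as a sum over the $d_\ell$ nonzero coordinates $s$ of $|x[\ell]|$ gives $\sum_s |A|({\bf 1},\dots,e_s,\dots,{\bf 1}) \leq \sum_s 1 = d_\ell = \min({\bf d})$, where each summand is at most $1$ by the plane sub-stochasticity constraint~\eqref{eq:stoch} applied in coordinate $\ell$.

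With the pointwise bound $|A_i({\bf x})| \leq \min({\bf d})$ in hand for all ${\bf x}$ of the prescribed form, I would finish as follows. We actually want $\|\lambda - \gamma\|_{\ell_2} \leq \min({\bf d})\sqrt{k}$, but the naive triangle inequality only gives $|A_i({\bf x}) - A_i({\bf y})| \leq 2\min({\bf d})$ and hence a factor-$2$ loss. The fix is to observe that since $|A_i|$ is nonnegative on basis vectors and $|x[j]|, |y[j]| \geq 0$ coordinatewise, $A_i({\bf x})$ and $A_i({\bf y})$ — wait, this is not quite right either since the $A_i$ themselves can be signed. A cleaner fix: note that the origin ${\bf 0}$ (take $x[s] = 0$, which is \emph{not} in $\Cball_{d_s}^n$ when $d_s \geq 1$, so this needs care). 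The honest route is to keep the factor and note that the statement as written, $\diam(\Lambda_{\bf d}) \leq \min({\bf d})\sqrt{k}$, already follows if we use $\|\lambda - \gamma\|_{\ell_2}^2 = \sum_i (A_i({\bf x}) - A_i({\bf y}))^2$ together with the observation that for the specific metric we only need an upper bound — so I would instead bound $\sum_i (A_i({\bf x}))^2 \le k \min({\bf d})^2 \cdot (\text{something})$. Given that the paper's constant $C(t)$ in Lemma~\ref{lem:smallprocess} absorbs absolute constants anyway, the cleanest honest proof is: $\|\lambda-\gamma\|_{\ell_2}^2 \le \sum_i(|A_i({\bf x})| + |A_i({\bf y})|)^2 \le \sum_i (2\min({\bf d}))^2 = 4k\min({\bf d})^2$, giving $\diam(\Lambda_{\bf d}) \le 2\min({\bf d})\sqrt k$; but to get exactly the stated bound one centers the process, replacing $\Lambda_{\bf d}$ by $\Lambda_{\bf d} - \lambda_0$ for a fixed $\lambda_0$, which does not change $\Exp[\max_\lambda \langle\epsilon,\lambda\rangle]$ and halves the relevant radius — I expect the authors intend the diameter bound with a possible absolute-constant caveat, and the main obstacle is simply this bookkeeping; the genuine mathematical content is entirely the monotonicity-plus-stochasticity argument of the previous paragraph, which I would present as the heart of the proof.
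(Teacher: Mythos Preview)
Your approach is essentially identical to the paper's: both establish the pointwise bound $|A_i({\bf x})| \leq \min({\bf d})$ from plane sub-stochasticity (you spell out the monotonicity-and-expand-along-the-minimizing-coordinate argument that the paper leaves implicit) and then conclude. The factor-of-$2$ issue you agonize over is genuine, and the paper's one-line proof has exactly the same gap without comment; since the diameter bound is only ever used inside Lemma~\ref{lem:smallprocess} up to the absorbing constant~$C(t)$, neither you nor the authors need to repair it.
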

\begin{proof}
Because $A_i$ is plane sub-stochastic, $|A_i({\bf x})| \leq \min({\bf d})$ for every ${\bf x} \in \Cball$.  Therefore $\Lambda_{\bf d}$ is a set of $k$-dimensional vectors whose entries are bounded above in absolute value by $\min({\bf d})$, and the proposition follows.
\end{proof}

\subsection{Bounds on the covering numbers}

To bound the covering numbers of~$(\Lambda_{\bf d}, \ell_2)$ we use a technique akin to Maurey's empirical method for bounding $N(B_{\ell_1^n},\ell_2,\eps)$, the covering numbers of the unit ball of~$\ell_1^n$ under~$\ell_2$ (see for instance~\cite[Section~1.4]{Chafai:2012}).  
In particular, for every $t$-tuple ${\bf x}$ as in~\eqref{def:Lambda}, we use the probabilistic method to show that there exists another $t$-tuple ${\bf \widetilde x}$ of vectors $\widetilde x[s]\in\R^n$ such that each $\widetilde x[s]$ is a ``sparse'' version of~$x[s]$. By this we mean that it has few nonzero entries, each of which has relatively small magnitude, and such that ${\bf A(\widetilde x)}$ is close to~${\bf A(x)}$ in Euclidean distance. This implies that there exists a net composed of all points ${\bf A(\widetilde x)}$ such that~${\bf \widetilde x}$ is sparse and that the covering numbers can be bounded by the number of $t$-tuples of sparse vectors.
The sparse vectors themselves are obtained by taking the empirical average of independent samples from a signed and scaled standard basis vector whose average equals~$x[s]$.
Quantitatively, we get the following lemma.

\begin{lemma}\label{lem:maurey}
For any $\eps \geq 1$, the set $\Lambda_{\bf d}$ has an $\eps\sqrt{k\min({\bf d})}$-net of size~$n^{2t\max({\bf d})/\eps^{2/t}}$.
\end{lemma}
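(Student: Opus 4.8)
The plan is to use Maurey's empirical method to replace each $x[s] \in \Cball_{d_s}^n$ by a sparse, bounded-magnitude vector $\widetilde x[s]$ such that the vector $\bigl(A_1(\widetilde{\bf x}),\dots,A_k(\widetilde{\bf x})\bigr)$ is close to $\lambda = \bigl(A_1({\bf x}),\dots,A_k({\bf x})\bigr)$ in $\ell_2$, and then bound $N(\Lambda_{\bf d},\ell_2,\cdot)$ by counting how many such sparse tuples there are. Fix ${\bf x}$ with $x[s] \in \Cball_{d_s}^n$, so $x[s]$ is a $\pm 1/0$ vector with exactly $\min\{d_s,n\}$ nonzero entries. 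Write $x[s] = \sum_{j} (x[s])_j\, e_j = d_s \cdot \Exp[Z_s]$, where $Z_s$ is the random vector equal to $\sign\bigl((x[s])_j\bigr) e_j$ with $j$ chosen uniformly among the support of $x[s]$ (so $Z_s$ is a signed standard basis vector, in particular has $\ell_\infty$-norm $1$ and $\ell_0$-norm $1$). Let $m$ be a parameter to be chosen, draw $Z_s^{(1)},\dots,Z_s^{(m)}$ i.i.d.\ copies of $Z_s$ for each $s\in[t]$, and set $\widetilde x[s] = \frac{d_s}{m}\sum_{\ell=1}^m Z_s^{(\ell)}$, so that $\Exp[\widetilde x[s]] = x[s]$ and $\widetilde x[s]$ is a sum of $m$ scaled signed basis vectors; it thus has at most $m$ nonzero entries, each an integer multiple of $d_s/m$ with total $\ell_1$-mass at most $d_s$.

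Next I would estimate $\Exp\bigl[\|\lambda - \widetilde\lambda\|_{\ell_2}^2\bigr] = \sum_{i=1}^k \Exp\bigl[\bigl(A_i({\bf x}) - A_i(\widetilde{\bf x})\bigr)^2\bigr]$ where $\widetilde\lambda_i = A_i(\widetilde{\bf x})$. Using multilinearity, the telescoping identity
\begin{equation*}
A_i({\bf x}) - A_i(\widetilde{\bf x}) = \sum_{s=1}^t A_i\bigl(\widetilde x[1],\dots,\widetilde x[s-1], x[s]-\widetilde x[s], x[s+1],\dots,x[t]\bigr),
\end{equation*}
together with $(a_1+\cdots+a_t)^2 \le t(a_1^2+\cdots+a_t^2)$, reduces the second-moment bound to controlling, for each $s$, a variance in the single coordinate $x[s]-\widetilde x[s] = \frac{d_s}{m}\sum_\ell (Z_s^{(\ell)} - \Exp Z_s)$. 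Since the summands are i.i.d.\ and centered, that inner variance is $\frac{d_s^2}{m}\var(\text{single term})$, and each term $A_i\bigl(\dots, Z_s^{(\ell)},\dots\bigr)$ is a single entry of $A_i$ multiplied by the other arguments; plane sub-stochasticity bounds sums of $|A_i|$ over any single plane by $1$. Summing over $i\in[k]$ and using that each $\widetilde x[s']$ has $\ell_1$-mass at most $d_{s'}$ and each $x[s']$ has $\ell_1$-mass $d_{s'}$, I expect the bound $\Exp\bigl[\|\lambda-\widetilde\lambda\|_{\ell_2}^2\bigr] \le t^2 \frac{k\, d_1\cdots d_t}{m \cdot (\text{something})}$; the precise bookkeeping should yield roughly $\frac{t^2 k \max({\bf d})\min({\bf d})^{\,?}}{m}$ — the key is that after carefully tracking which $d_{s'}$ factors come from $\ell_1$-masses and which coordinate carries the $\ell_\infty \le 1$ sparse structure, one gets $\le C(t)\, k\, \min({\bf d})^2 \cdot \frac{\max({\bf d})^{?}}{m}$ matching what makes the target net size come out. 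Choosing $m = \lceil 2t\max({\bf d})/\eps^{2/t}\rceil$ and invoking the probabilistic method (a random choice of $\widetilde{\bf x}$ has positive probability of satisfying $\|\lambda - \widetilde\lambda\|_{\ell_2} \le \eps\sqrt{k\min({\bf d})}$), we conclude every $\lambda \in \Lambda_{\bf d}$ is within $\eps\sqrt{k\min({\bf d})}$ of some point $A({\bf \widetilde x})$ with $\widetilde{\bf x}$ of the above sparse form.

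Finally I would count the admissible sparse tuples $\widetilde{\bf x}$: each $\widetilde x[s]$ is determined by an unordered multiset of $m$ signed basis vectors in $\{-1,0,1\}^n$ — equivalently a function $[m]\to \{\pm e_1,\dots,\pm e_n\}$ up to permutation — so there are at most $(2n)^m \le n^{Cm}$ choices for each $s$ (for $n\ge 2$), hence at most $n^{Ctm}$ tuples altogether, and $N(\Lambda_{\bf d},\ell_2,\eps\sqrt{k\min({\bf d})}) \le n^{Ctm} \le n^{2t\max({\bf d})/\eps^{2/t}}$ after absorbing constants into the exponent (adjusting the statement's constant if needed, or noting $t\ge 3$ lets us be generous). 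The main obstacle I anticipate is the variance computation in the middle step: getting the exponents on $\min({\bf d})$ and $\max({\bf d})$ exactly right requires carefully exploiting plane sub-stochasticity in the correct coordinate and being honest about the asymmetry between the ``heavy'' coordinate $x[s]-\widetilde x[s]$ and the $\ell_1$-bounded remaining coordinates — a naive application of $|A_i({\bf x})| \le \min({\bf d})$ for every argument loses too much, so the argument must instead bound sums over planes and pick up only one factor of $\min({\bf d})$ (from the sparsity of the single $Z_s^{(\ell)}$) while the rest of the mass contributes via $\ell_1$-norms that telescope against the plane-stochasticity constraints.
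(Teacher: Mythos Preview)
Your overall strategy---Maurey's empirical method, replacing each $x[s]$ by an empirical average $\widetilde x[s]$ of signed basis vectors, bounding $\Exp\|\lambda-\widetilde\lambda\|_{\ell_2}^2$, and then counting sparse tuples---is exactly what the paper does. The counting step and the final probabilistic-method conclusion are fine. The gap is precisely where you flag it: the variance computation, which you leave with question marks and ``I expect'' hedges.

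The telescoping you propose does not simplify the variance bound in the way you hope. After writing
\[
A_i({\bf x})-A_i(\widetilde{\bf x})=\sum_{s=1}^t A_i\bigl(\widetilde x[1],\dots,\widetilde x[s-1],\,x[s]-\widetilde x[s],\,x[s+1],\dots,x[t]\bigr),
\]
the $s$th term still carries the \emph{random} arguments $\widetilde x[1],\dots,\widetilde x[s-1]$. You can indeed use the i.i.d.\ structure in coordinate~$s$ to pull out a factor $d_s^2/m$ \emph{conditionally}, but to take the full expectation you must then control $\Exp\bigl[A_i(\widetilde x[1],\dots,\widetilde x[s-1],e_j,x[s+1],\dots)^2\bigr]$, and the $\ell_\infty$-norms of the $\widetilde x[s']$ are not bounded by~$1$ (they can be as large as~$d_{s'}$). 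Expanding those $\widetilde x[s']$ in turn forces you back to the full double sum over sample indices---which is what the paper does directly. The paper writes
\[
\Exp\bigl[A_i(\widetilde{\bf x})^2\bigr]=\Bigl(\tfrac{d_1\cdots d_t}{c_1\cdots c_t}\Bigr)^{2}\sum_{{\bf l},{\bf l'}}\Exp\bigl[A_i(\widetilde{\bf x}_{\bf l})A_i(\widetilde{\bf x}_{\bf l'})\bigr],
\]
groups the pairs $({\bf l},{\bf l'})$ by the set $\Delta({\bf l},{\bf l'})=\{s:l_s\ne l'_s\}$, observes that the $|\Delta|=t$ terms exactly cancel $A_i({\bf x})^2$, and for $|\Delta|=r<t$ uses plane sub-stochasticity once (on the shared coordinates, via $|A_i|({\bf 1}_{D_1},\dots,{\bf 1}_{D_t})\le\min({\bf d})$) to get a single factor of $\min({\bf d})$. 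Summing over $r<t$ gives $\var[A_i(\widetilde{\bf x})]\le 2^t\eta^t\min({\bf d})$ with $\eta=\eps^{2/t}$. This is the step your sketch is missing; the telescope is a detour rather than a shortcut.

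One further difference worth noting: the paper takes a \emph{coordinate-dependent} number of samples $c_s=d_s/\eta$, whereas you use a uniform~$m$. Both choices ultimately lead to the same net-size bound $n^{O_t(\max({\bf d})/\eps^{2/t})}$ (your exponent picks up an extra factor of~$t$, which is harmless for the downstream Dudley integral), but the coordinate-dependent choice makes the double-sum bookkeeping especially clean, since every ratio $c_s/d_s$ collapses to $1/\eta$.
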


\begin{proof}
Fix  $x[1]\in \Cball_{d_1}^n,\dots,x[t]\in\Cball_{d_t}^n$ and let $D_1,\dots,D_t\subseteq [n]$ denote their supports.
Let $\eta =\eps^{2/t}$ and do the following for each $s\in [t]$: Set $c_s = d_s/\eta$ (assume for simplicity that this is an integer) and
note that $c_s \leq n$.  
Let $e[s]$ be an independent random standard basis vector of~$\R^n$ whose nonzero coordinate is distributed uniformly over~$D_s$.
For each $l\in [c_s]$ let~$e[s]_l$ be an independent copy of $e[s]$.
Define the random vector $\widetilde x[s]_l = (x[s]\circ e[s]_l)$, where~$\circ$ denotes coordinate-wise multiplication, and define
\beqn
\widetilde x[s]
=
\frac{1}{c_s} \sum_{l=1}^{c_s} d_s\, \widetilde x[s]_l.
\eeqn
Thus, $\widetilde x[s]$ is the empirical average of independent random vectors with expectation~$x[s]$.
By multi-linearity of the~$A_i$ it follows that $\Exp[{\bf A(\widetilde x)}] = {\bf A(x)}$.

We bound the expected Euclidean distance of ${\bf A(\widetilde x)}$ and ${\bf A}({\bf x})$.
By Jensen's inequality,
\begin{align}
\Exp\|{\bf A(\widetilde x) - A(x)}\|_{\ell_2}
\leq
\left(\sum_{i=1}^k\Exp\big[ |A_i({\bf \widetilde x}) - A_i({\bf x})|^2\big]\right)^{1/2} =
\left(\sum_{i=1}^k\var\big[ A_i({\bf \widetilde x})\big]\right)^{1/2}. \label{eq:var1}
\end{align}

We treat each of the above variances separately.  
Fix an $i\in[k]$. 
For each ${\bf l}\in [c_1]\times\cdots\times[c_t]$ define $\widetilde{\bf x}_{\bf l} = \widetilde x[1]_{l_1}\times\cdots\times \widetilde x[t]_{l_{t}}$ and ${\bf e}_{\bf l} = e[1]_{l_1}\times\cdots\times e[t]_{l_{t}}$.  
Using multi-linearity of~$A_i$, we get 
\begin{align*}
\Exp \big[A_i({\bf \widetilde x})^2\big]
&= \Big(\frac{d_1\cdots d_t}{c_1\cdots c_t}\Big)^{2}\sum_{{\bf l,l'}\in[c]^t} \Exp\big[A_i(\widetilde{\bf x}_{\bf l})A_i(\widetilde{\bf x}_{\bf l'})\big].
\end{align*}
For every pair ${\bf l,l'}\in[c_1]\times\cdots\times[c_t]$ denote by $\Delta({\bf l}, {\bf l'})\subseteq[t]$ the set of coordinates $s\in[t]$ such that $l_s\ne l_s'$.  
In the case where $|\Delta({\bf l}, {\bf l'})| = t$, the tuples ${\bf e_l}$ and ${\bf e_{l'}}$ are independent.
It follows that in that case, the random variables $A_i(\widetilde{\bf x}_{\bf l})$ and $A_i(\widetilde{\bf x}_{\bf l'})$ are independent and that 
\beqn
\Exp\big[A_i(\widetilde{\bf x}_{\bf l})A_i(\widetilde{\bf x}_{\bf l'})\big] 
= 
\frac{A_i({\bf x})^2}{(d_1\cdots d_t)^2}.  
\eeqn
Since there are fewer than $(c_1\cdots c_t)^2$ pairs ${\bf l,l'}\in[c_1]\times\cdots\times[c_t]$ such that $|\Delta({\bf l,l'})| = t$, the variance is at most
\begin{align*}
\var\big[
A_i(\widetilde{\bf x})
\big]
&=
\Big(\frac{d_1\cdots d_t}{c_1\cdots c_t}\Big)^{2}\Bigg(
\sum_{|\Delta({\bf l,l'})| < t}\Exp\big[A_i(\widetilde{\bf x}_{\bf l})A_i(\widetilde{\bf x}_{\bf l'})\big]
+
\frac{1}{(d_1\cdots d_t)^2}\sum_{|\Delta({\bf l,l'})| = t}A_i({\bf x})^2
\Bigg)
-
A_i({\bf x})^2\\
&\leq
\eta^{2t}
\sum_{\Delta({\bf l,l'}) < t}\Exp\big[A_i(\widetilde{\bf x}_{\bf l})A_i(\widetilde{\bf x}_{\bf l'})\big].
\end{align*}
Fix a pair ${\bf l,l'}\in[c_1]\times\cdots\times[c_t]$ such that $S = \Delta({\bf l,l'})$ satisfies $|S| = r < t$
and assume for simplicity that $l_s \neq l'_s$ when $s \leq r$.
Since $x[s]\in\Cball$ for each $s\in[t]$ and since~$A_i$ is plane sub-stochastic,
\begin{align*}
\Exp\big[A_i(\widetilde{\bf x}_{\bf l})A_i(\widetilde{\bf x}_{\bf l'})\big]
&=
\Exp\Bigg[\Bigg(\prod_{i=1}^t \langle x[i],e[i]_{l_i}\rangle\langle x[i],e[i]_{l'_i}\rangle\Bigg) A_i({\bf e}_{\bf l})A_i({\bf e}_{\bf l'})\Bigg]\\
&\leq
\Exp\big[|A_i|({\bf e}_{\bf l})|A_i|({\bf e}_{\bf l'})\big]\\
&=
\frac{1}{(d_1\cdots d_r)^{2}}\Exp \big[|A_i|({\bf 1}_{D_1}, \dots, {\bf 1}_{D_r}, e[r+1]_{l_{r+1}}, \dots, e[t]_{l_t})^2\big] \\
&\leq
\frac{1}{(d_1\cdots d_r)^{2}}\Exp \big[|A_i|({\bf 1}_{D_1}, \dots, {\bf 1}_{D_r}, e[r+1]_{l_{r+1}}, \dots, e[t]_{l_t})\big] \\
&\leq
\frac{|A_i|({\bf 1}_{D_1},\dots, {\bf 1}_{D_t})}{(d_1\cdots d_r)^2 d_{r+1}\cdots d_t}\\
&\leq
\frac{\min({\bf d})}{(d_1\cdots d_r)^2 d_{r+1}\cdots d_t}.
\end{align*}
In general,
\beqn
\Exp\big[A_i(\widetilde{\bf x}_{\bf l})A_i(\widetilde{\bf x}_{\bf l'})\big]
\leq
\frac{\min({\bf d})}{\prod_{s\in S}d_s^2\prod_{s\in [t]\smallsetminus S} d_s}.
\eeqn

The number of ${\bf l, l'}\in[c_1]\times\cdots\times[c_t]$ such that $\Delta({\bf l,l'}) = S$ is at most $\prod_{s\in S}c_s^2\prod_{s\in [t]\smallsetminus S}c_s$.
Hence, using the definition of~$c_s$ and the assumption that~$\eta \geq 1$, we  get
\begin{align*}
\var\big[
A_i(\widetilde{\bf x})
\big]
&\leq
\eta^{2t}
\sum_{r=0}^{t-1}\sum_{S\in {[t]\choose r}} \Big(\prod_{s\in S}c_s^2\prod_{s\in [t]\smallsetminus S}c_s\Big)\Big( \frac{\min({\bf d})}{\prod_{s\in S}d_s^2\prod_{s\in [t]\smallsetminus S} d_s}\Big)\\
&\leq
\eta^{t}\min({\bf d}) \sum_{r=0}^{t-1} {t\choose r}\eta^{-r}\\
&\leq 2^t \eta^{t}\min({\bf d}).
\end{align*}
Plugging this into~\eqref{eq:var1}, it then follows
from the Averaging Principle, there exist vectors $y[s]\in (d_s/c_s)\{-c_s,-c_s+1,\dots,0,\dots,c_s-1,c_s\}$ with at most~$c_s$ nonzero entries such that
\beqn
\|{\bf A}(y[1],\dots,y[t]) - {\bf A}(x[1],\dots,x[t])\|_{\ell_2} \leq C_t\eta^{t/2}\sqrt{k\min({\bf d})}.
\eeqn
Since there are at most $\prod_{s=1}^t{n\choose c_s}c_s^{c_s} \leq n^{2(c_1+\cdots+c_t)} \leq n^{2t\max({\bf d})/\eta}$ tuples $(y[1],\dots,y[t])$ as above, the result follows.
\end{proof}

\subsection{Putting things together}

\begin{proof}[ of Lemma~\ref{lem:smallprocess}]
Lemma~\ref{lem:maurey} shows that for any $\eps \geq 1$, we have
\beq\label{eq:net1}
N\big(\Lambda_{\bf d}, \ell_2, \eps\sqrt{k\min({\bf d})}\big) \leq n^{2\max({\bf d})/\eps^{2/t}}.
\eeq

In addition, for $\eps > 0$, the size of any $\eps\sqrt{kn}$-net is bounded above by the cardinality of~$\Lambda_{\bf d}$, and therefore 
\beqn\label{eq:net2}
N\big(\Lambda_{\bf d}, \ell_2, \eps\sqrt{k\min({\bf d})}\big) 
\leq
{n\choose d_1}\cdots{n\choose d_t} \leq n^{2t\max({\bf d})}.
\eeqn  

Denote $\Delta = \diam(\Lambda_{\bf d})$.
By Corollary~\ref{cor:dudley-lambda}, a substitution of variables, and Proposition~\ref{prop:diameter}, 
\begin{align*}
\int_0^{\Delta}\sqrt{\frac{\log N(\Lambda_{\bf d}, \ell_2, \eps)}{2tk\min({\bf d})\max({\bf d})\log n}}d\eps
&\leq
\int_0^1d\eps + 
\int_1^{ \Delta/\sqrt{k\min({\bf d})} } \eps^{-1/t}d\eps\\
&\leq
1 +
\frac{ \Delta^{1 - 1/t} }{ (1-1/t)(k\min({\bf d}))^{1/2 - 1/(2t)} }\\
&\leq
C\min({\bf d})^{1/2 - 1/(2t)}.
\end{align*}
Hence,
there is an absolute constant~$C\in (0,\infty)$
such that the left-hand side of~\eqref{eq:process-val}  is at most
\beqn
C\int_0^{\Delta}\sqrt{\log N(\Lambda_{\bf d}, \ell_2, \eps)}d\eps
\leq
C'\sqrt{k\max({\bf d})\log n}\min({\bf d})^{1 - 1/(2t)}.
\eeqn 
\end{proof}

\bibliographystyle{alphaabbrv}
\bibliography{arith_exp.bib}

\begin{thebibliography}{CGLP12}
\expandafter\ifx\csname urlstyle\endcsname\relax
  \providecommand{\doi}[1]{doi:\discretionary{}{}{}#1}\else
  \providecommand{\doi}{doi:\discretionary{}{}{}\begingroup
  \urlstyle{rm}\Url}\fi

\bibitem[AC88]{Alon:1988}
N.~Alon and F.~R. Chung.
\newblock Explicit construction of linear sized tolerant networks.
\newblock \emph{Annals of Discrete Mathematics}, 38:15--19, 1988.

\bibitem[AR94]{Alon:1994a}
N.~Alon and Y.~Roichman.
\newblock Random {C}ayley graphs and expanders.
\newblock \emph{Random Structures \& Algorithms}, 5, 1994.

\bibitem[BDG17]{BrietDG:2017}
J.~Bri\"{e}t, Z.~Dvir, and S.~Gopi.
\newblock Outlaw distributions and locally decodable codes.
\newblock In \emph{Proceedings of the 8th Annual Innovations in Theoretical
  Computer Science conference (ITCS 2017)}. 2017.
\newblock To appear. Available at arXiv: 1609.06355.

\bibitem[BLM13]{Boucheron:2013}
S.~Boucheron, G.~Lugosi, and P.~Massart.
\newblock \emph{Concentration inequalities}.
\newblock Oxford University Press, Oxford, 2013.
\newblock A nonasymptotic theory of independence, with a foreword by Michel
  Ledoux.

\bibitem[Blo12]{Bloom:2012}
T.~F. Bloom.
\newblock Translation invariant equations and the method of {S}anders.
\newblock \emph{Bulletin of the London Mathematical Society}, page bds045,
  2012.

\bibitem[BNR12]{Briet:2012d}
J.~Bri\"{e}t, A.~Naor, and O.~Regev.
\newblock Locally decodable codes and the failure of cotype for projective
  tensor products.
\newblock \emph{Electronic Research Announcements in Mathematical Sciences
  (ERA-MS)}, 19:120--130, 2012.

\bibitem[Bri15]{Briet:2015}
J.~Bri\"{et}.
\newblock On embeddings of $\ell_1^k$ from locally decodable codes.
\newblock \emph{Electronic Colloquium on Computational Complexity (ECCC)},
  (86), 2015.

\bibitem[CG90]{Chung:1990}
F.~R.~K. Chung and R.~L. Graham.
\newblock Quasi-random hypergraphs.
\newblock \emph{Random Structures \& Algorithms}, 1(1):105--124, 1990.

\bibitem[CGLP12]{Chafai:2012}
D.~Chafa{\"{\i}}, O.~Gu{\'e}don, G.~Lecu{\'e}, and A.~Pajor.
\newblock \emph{Interactions between compressed sensing random matrices and
  high dimensional geometry}, volume~37 of \emph{Panoramas et Synth\`eses
  [Panoramas and Syntheses]}.
\newblock Soci\'et\'e Math\'ematique de France, Paris, 2012.

\bibitem[CGW89]{Chung:1989}
F.~R.~K. Chung, R.~L. Graham, and R.~M. Wilson.
\newblock Quasi-random graphs.
\newblock \emph{Combinatorica}, 9(4):345--362, 1989.

\bibitem[CT14]{CohenTal:2014}
G.~Cohen and A.~Tal.
\newblock Two structural results for low degree polynomials and applications.
\newblock \emph{arXiv preprint arXiv:1404.0654}, 2014.

\bibitem[CZ16]{Conlon:2016}
D.~Conlon and Y.~Zhao.
\newblock Quasirandom {C}ayley graphs.
\newblock \emph{preprint}, 2016.
\newblock Available at arXiv:1603.03025 [math.CO].

\bibitem[EG16]{Ellenberg:2016}
J.~S. Ellenberg and D.~Gijswijt.
\newblock On large subsets of $\mathbb{F}_q^n$ with no three-term arithmetic
  progression.
\newblock \emph{arXiv preprint arXiv:1605.09223}, 2016.

\bibitem[Gre07]{Green:2007}
B.~Green.
\newblock Montr\'eal notes on quadratic {F}ourier analysis.
\newblock In \emph{Additive combinatorics}, volume~43 of \emph{CRM Proc.
  Lecture Notes}, pages 69--102. Amer. Math. Soc., Providence, RI, 2007.

\bibitem[HLW06]{Hoory:2006}
S.~Hoory, N.~Linial, and A.~Wigderson.
\newblock Expander graphs and their applications.
\newblock \emph{Bull. Amer. Math. Soc.}, 43:439--561, 2006.

\bibitem[KRS16]{Kohayakawa:2016}
Y.~Kohayakawa, V.~R\"odl, and M.~Schacht.
\newblock Discrepancy and eigenvalues of {C}ayley graphs.
\newblock \emph{preprint}, 2016.
\newblock Available at arXiv:1602.02291 [math.CO].

\bibitem[LL14]{Linial:2014}
N.~Linial and Z.~Luria.
\newblock On the vertices of the {$d$}-dimensional {B}irkhoff polytope.
\newblock \emph{Discrete Comput. Geom.}, 51(1):161--170, 2014.

\bibitem[LM15a]{Lenz:2015_eig}
J.~Lenz and D.~Mubayi.
\newblock Eigenvalues and linear quasirandom hypergraphs.
\newblock In \emph{Forum of Mathematics, Sigma}, volume~3. Cambridge Univ
  Press, 2015.

\bibitem[LM15b]{Lenz:2015}
J.~Lenz and D.~Mubayi.
\newblock The poset of hypergraph quasirandomness.
\newblock \emph{Random Structures Algorithms}, 46(4):762--800, 2015.

\bibitem[LN83]{Lidl:1983}
R.~Lidl and H.~Niederreiter.
\newblock Finite fields.
\newblock In G.-C. Rota, editor, \emph{Finite Fields}, volume~20 of
  \emph{Encyclopedia of Mathematics and its Applications}. Addison-Wesley,
  Reading, Massachusetts, 1983.

\bibitem[LPS88]{Lubotzky:1988}
A.~Lubotzky, R.~Phillips, and P.~Sarnak.
\newblock Ramanujan graphs.
\newblock \emph{Combinatorica}, 8(3):261--277, 1988.

\bibitem[LR04]{Landau:2004}
Z.~Landau and A.~Russell.
\newblock Random {C}ayley graphs are expanders: a simplified proof of the
  {A}lon-{R}oichman theorem.
\newblock \emph{Electron. J. Combin.}, 11(1), 2004.

\bibitem[LT91]{Ledoux:1991}
M.~Ledoux and M.~Talagrand.
\newblock \emph{Probability in Banach Spaces}.
\newblock Springer, 1991.

\bibitem[Mar73]{Margulis:1973}
G.~A. Margulis.
\newblock Explicit constructions of expanders.
\newblock \emph{Problemy Pereda\v{c}i Informacii}, 9(4):71--80, 1973.

\bibitem[Mar88]{Margulis:1988}
G.~A. Margulis.
\newblock Explicit group-theoretical constructions of combinatorial schemes and
  their application to the design of expanders and concentrators.
\newblock \emph{Problems of Information Transmission}, 24(1):39--46, 1988.

\bibitem[O'B04]{OBryant:2004}
K.~O'Bryant.
\newblock A complete annotated bibliography of work related to {S}idon
  sequences.
\newblock \emph{Electron. J. Combin}, 11:39, 2004.

\bibitem[O'B11]{Obryant:2011}
K.~O'Bryant.
\newblock Sets of integers that do not contain long arithmetic progressions.
\newblock \emph{Electron. J. Combin.}, 18(1):Paper 59, 15, 2011.

\bibitem[Ruz93]{Ruzsa:1993}
I.~Z. Ruzsa.
\newblock Solving a linear equation in a set of integers. {I}.
\newblock \emph{Acta Arith.}, 65(3):259--282, 1993.

\bibitem[San11]{Sanders:2011}
T.~Sanders.
\newblock On {R}oth's theorem on progressions.
\newblock \emph{Ann. of Math. (2)}, 174(1):619--636, 2011.

\bibitem[SS14]{Schoen:2014}
T.~Schoen and I.~D. Shkredov.
\newblock Roth's theorem in many variables.
\newblock \emph{Israel Journal of Mathematics}, 199(1):287--308, 2014.

\bibitem[Sze90]{Szemeredi:1990}
E.~Szemer{\'e}di.
\newblock Integer sets containing no arithmetic progressions.
\newblock \emph{Acta Math. Hungar.}, 56(1-2):155--158, 1990.

\bibitem[Tal14]{Talagrand:2014}
M.~Talagrand.
\newblock \emph{Upper and lower bounds for stochastic processes}, volume~60 of
  \emph{Ergebnisse der Mathematik und ihrer Grenzgebiete. 3. Folge. A Series of
  Modern Surveys in Mathematics [Results in Mathematics and Related Areas. 3rd
  Series. A Series of Modern Surveys in Mathematics]}.
\newblock Springer, Heidelberg, 2014.
\newblock Modern methods and classical problems.

\bibitem[Tao07]{Tao:2007}
T.~Tao.
\newblock The dichotomy between structure and randomness, arithmetic
  progressions, and the primes.
\newblock In \emph{International {C}ongress of {M}athematicians. {V}ol. {I}},
  pages 581--608. Eur. Math. Soc., Z\"urich, 2007.

\bibitem[Tao14]{Tao:2014}
T.~Tao.
\newblock Algebraic combinatorial geometry: the polynomial method in arithmetic
  combinatorics, incidence combinatorics, and number theory.
\newblock \emph{EMS Surv.\ Math.\ Sci.}, 1:1--46, 2014.

\bibitem[Tho87a]{Thomason:1987}
A.~Thomason.
\newblock Pseudorandom graphs.
\newblock In \emph{Random graphs '85 ({P}ozna\'n, 1985)}, volume 144 of
  \emph{North-Holland Math. Stud.}, pages 307--331. North-Holland, Amsterdam,
  1987.

\bibitem[Tho87b]{Thomason:1987b}
A.~Thomason.
\newblock Random graphs, strongly regular graphs and pseudorandom graphs.
\newblock In \emph{Surveys in combinatorics 1987 ({N}ew {C}ross, 1987)}, volume
  123 of \emph{London Math. Soc. Lecture Note Ser.}, pages 173--195. Cambridge
  Univ. Press, Cambridge, 1987.

\bibitem[TJ74]{Tomczak-Jaegermann:1974}
N.~Tomczak-Jaegermann.
\newblock The moduli of smoothness and convexity and the {R}ademacher averages
  of trace classes ${S}_p$ ($1\leq p<\infty$).
\newblock \emph{Studia Math.}, 50:163--182, 1974.

\end{thebibliography}

\appendix

\section{Proof of Theorem~\ref{thm:matrix-Hoeffding}}
\label{sec:matrix-Hoeffding}

For completeness, we derive Theorem~\ref{thm:matrix-Hoeffding} (the matrix Hoeffding bound) from the following special case of a result of Tomczak-Jaegermann~\cite[Theorem~3.1]{Tomczak-Jaegermann:1974}.

\begin{theorem}[Tomczak-Jaegermann]\label{thm:TJ}
There exists an absolute constant~$C \in (0,\infty)$ such that the following holds.
Let~$k$ be a positive integer and let~$p \geq 2$.
Let~$A_1,\dots,A_k \in \R^{n\times n}$. 
Then,
\beqn
\Exp\Big[\Big\|\sum_{i=1}^k \epsilon_i A_i\Big\|_{S_\infty}\Big]
\leq 
C\sqrt{\log n}
\Big(\sum_{i=1}^k \|A_i\|_{S_\infty}^2\Big)^{1/2}.
\eeqn
\end{theorem}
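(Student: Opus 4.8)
The plan is to pass from the spectral norm to an even Schatten norm, expand the resulting trace moment, and recognise the surviving terms as a scalar Rademacher moment; the factor $\sqrt{\log n}$ will appear when comparing $S_\infty$ with $S_{2m}$ for $m$ of order $\log n$. (Note that $p$ does not appear in the conclusion, so it plays no role here.) Concretely, set $m = \ceil{\log n}$, write $S = \sum_{i=1}^k \epsilon_i A_i$ and $a_i = \|A_i\|_{S_\infty}$. Since $\|M\|_{S_\infty}\le \|M\|_{S_{2m}} \le n^{1/(2m)}\|M\|_{S_\infty}$ for every $M\in\R^{n\times n}$, and since $n^{1/(2m)} \le e^{1/2}$, it suffices to bound $\Exp\big[\|S\|_{S_{2m}}\big]$; and by Jensen's inequality together with $\|M\|_{S_{2m}}^{2m} = \Tr\big[(MM^{\mathsf T})^m\big]$, it is enough to estimate $\Exp\big[\Tr[(SS^{\mathsf T})^m]\big]$.

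Next I would expand this trace using multilinearity, obtaining
\[
\Exp\big[\Tr[(SS^{\mathsf T})^m]\big]
=
\sum_{i_1,\dots,i_{2m}=1}^k
\Exp[\epsilon_{i_1}\cdots\epsilon_{i_{2m}}]\;
\Tr\big[A_{i_1}A_{i_2}^{\mathsf T}A_{i_3}A_{i_4}^{\mathsf T}\cdots A_{i_{2m-1}}A_{i_{2m}}^{\mathsf T}\big].
\]
The Rademacher expectation equals $1$ if every index value occurs an even number of times among $i_1,\dots,i_{2m}$ and vanishes otherwise, so only such monomials survive. For each surviving term I would bound the trace crudely using $|\Tr[B]| \le \|B\|_{S_1} \le n\|B\|_{S_\infty}$ and submultiplicativity of the spectral norm, giving $\big|\Tr[A_{i_1}A_{i_2}^{\mathsf T}\cdots A_{i_{2m}}^{\mathsf T}]\big| \le n\prod_{j=1}^{2m} a_{i_j}$. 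Because the $a_i$ are nonnegative, summing over the surviving tuples reproduces a scalar moment exactly:
\[
\Exp\big[\Tr[(SS^{\mathsf T})^m]\big]
\;\le\;
n\sum_{\substack{i_1,\dots,i_{2m}\\ \text{each value even}}}\ \prod_{j=1}^{2m} a_{i_j}
\;=\;
n\,\Exp\Big[\Big(\sum_{i=1}^k \epsilon_i a_i\Big)^{2m}\Big].
\]

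To finish I would invoke the classical scalar Khintchine moment bound: $X = \sum_i \epsilon_i a_i$ is sub-Gaussian with variance proxy $\sum_i a_i^2$ (from $\cosh x\le e^{x^2/2}$), hence $\Exp[X^{2m}] \le (2m-1)!!\,\big(\sum_i a_i^2\big)^m \le (2m)^m\big(\sum_i a_i^2\big)^m$. Combining the three displays and taking $2m$-th roots gives
\[
\Exp\big[\|S\|_{S_{2m}}\big]
\;\le\;
n^{1/(2m)}\sqrt{2m}\,\Big(\sum_{i=1}^k a_i^2\Big)^{1/2}
\;\le\;
e^{1/2}\sqrt{2\,\ceil{\log n}}\,\Big(\sum_{i=1}^k \|A_i\|_{S_\infty}^2\Big)^{1/2},
\]
which is the asserted inequality with an absolute constant (for $n\ge 2$; the case $n=1$ is immediate).

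I expect the main, though mild, obstacle to be the trace-expansion step: one must check that exactly the ``all values even'' monomials survive and that the crude trace estimate costs only the factor $n$, which is then neutralised by $n^{1/(2m)} = O(1)$ precisely because $m$ is of order $\log n$ --- this is the source of the exponent $\tfrac12$ on $\log n$. A route that avoids the combinatorics altogether is the matrix moment-generating-function method: reduce to symmetric matrices by replacing each $A_i$ with its $2n\times 2n$ Hermitian dilation, use the operator inequality $\Exp[\cosh(\theta A_i)] \preceq \exp(\theta^2 A_i^2/2)$ together with Lieb's concavity theorem to obtain $\Exp\big[\|S\|_{S_\infty}\big] \le \sqrt{2\log(2n)}\,\big\|\sum_i A_i^2\big\|_{S_\infty}^{1/2}$, and then conclude via $\big\|\sum_i A_i^2\big\|_{S_\infty} \le \sum_i \|A_i\|_{S_\infty}^2$.
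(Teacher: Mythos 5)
Your proof is correct. Note first that the paper itself gives no proof of this statement: it is quoted as a special case of Tomczak-Jaegermann's 1974 result and used as a black box in Appendix~A, so there is no in-paper argument to compare against. What you have written is the standard trace-moment derivation of the matrix Khintchine inequality with the (non-sharp) $\sqrt{\log n}$ factor: pass to $S_{2m}$ with $m\asymp\log n$ so that $n^{1/(2m)}=O(1)$, expand $\Tr[(SS^{\mathsf T})^m]$ by multilinearity, discard monomials in which some Rademacher index occurs an odd number of times, bound each surviving trace by $n\prod_j\|A_{i_j}\|_{S_\infty}$ via $|\Tr B|\le\|B\|_{S_1}\le n\|B\|_{S_\infty}$ and submultiplicativity of $\|\cdot\|_{S_\infty}$, and recognise the resulting sum as the scalar moment $\Exp[(\sum_i\epsilon_i\|A_i\|_{S_\infty})^{2m}]$. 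All these steps are sound, and your observation that the parameter $p$ in the paper's statement plays no role in the conclusion is correct (it is a leftover from the more general $S_p$ version being specialized). One small presentational inaccuracy: the sub-Gaussian MGF bound $\cosh x\le e^{x^2/2}$ does not directly yield $\Exp[X^{2m}]\le(2m-1)!!\,\sigma^{2m}$; it gives this with a harmless universal factor, whereas the clean route to $(2m-1)!!\,\sigma^{2m}$ is the term-by-term domination of Rademacher moments by Gaussian moments. Since you immediately relax to $(2m)^m\sigma^{2m}$ this does not affect the argument. Worth flagging that this is not Tomczak-Jaegermann's original method, which proceeds by interpolation in the Schatten classes $S_p$ and gives $\Exp\|\sum_i\epsilon_iA_i\|_{S_p}\le C\sqrt p\,(\sum_i\|A_i\|_{S_p}^2)^{1/2}$, with $\sqrt{\log n}$ then obtained by taking $p\asymp\log n$ and comparing $S_p$ with $S_\infty$ --- precisely the comparison you perform with $p=2m$. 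In effect you have supplied a self-contained replacement for the citation. The alternative route you sketch (Hermitian dilation plus Lieb's concavity theorem) is also valid and is Tropp's approach; it gives the sharper variance parameter $\|\sum_i A_i^2\|_{S_\infty}^{1/2}$, from which the stated bound follows via $\|\sum_iA_i^2\|_{S_\infty}\le\sum_i\|A_i\|_{S_\infty}^2$.
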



\begin{proof}[ of Theorem~\ref{thm:matrix-Hoeffding}]
We proceed just as in the proof of Theorem~\ref{thm:tensor-hoeffding}.
For $C$ as in Theorem~\ref{thm:TJ}, let $\sigma = C\sqrt{8(\log n)/k}$ and $\alpha = e/(4\sigma^2)$.
Define
\beqn
Z = \Big\| \frac{1}{k}\sum_{i=1}^k (A_i - \Exp[A_i]) \Big\|_{S_\infty}.
\eeqn
By Markov's inequality,
\begin{align}
\Pr[Z > \eps]
=
\Pr\big[e^{\alpha Z^2} > e^{\alpha \eps^2}\big]
\leq
e^{-\alpha \eps^2}\Exp\big[e^{\alpha Z^2}\big].\label{eq:markov}
\end{align}

By Lemma~\ref{lem:symmetrization}, Theorem~\ref{thm:Kahane-Khintchine} and Theorem~\ref{thm:TJ}, for every integer~$p\geq 1$, 
$
\big(\Exp[Z^p]\big)^{1/p}
\leq
\sigma\sqrt{p}$.
The result then follows since
\beqn
\Exp\big[e^{\alpha Z^2}\big]
\leq
1 + \sum_{p=1}^{\infty}\Big(\frac{2 \alpha \sigma^2}{e}\Big)^p = 2.\qedhere
\eeqn
\end{proof}

\end{document}